\DeclareFontFamily{OT2}{cmr}{\hyphenchar\font45 }
\DeclareFontShape{OT2}{cmr}{m}{n}{
<5><6><7><8><9>gen*wncyr
<10><10.95><12><14.4><17.28><20.74><24.88>wncyr10}{}
\DeclareFontShape{OT2}{cmr}{b}{n}{
<5><6><7><8><9>gen*wncyb
<10><10.95><12><14.4><17.28><20.74><24.88>wncyb10}{}
\DeclareMathAlphabet{\mathcyr}{OT2}{cmr}{m}{n}
\DeclareMathAlphabet{\mathcyb}{OT2}{cmr}{b}{n}
\SetMathAlphabet{\mathcyr}{bold}{OT2}{cmr}{b}{n}
\newcommand{\genstirlingI}[3]{%
  \genfrac{[}{]}{0pt}{#1}{#2}{#3}%
}
\newcommand{\stirlingI}[2]{\genstirlingI{}{#1}{#2}}
\newtheorem{theoremcounter}{Theorem Counter}[section]
\theoremstyle{plain}
\newtheorem{theorem}[theoremcounter]{Theorem}
\newtheorem{proposition}[theoremcounter]{Proposition}
\newtheorem{lemma}[theoremcounter]{Lemma}
\theoremstyle{remark}
\newtheorem{definition}[theoremcounter]{Definition}
\newtheorem{remark}[theoremcounter]{Remark}
\newcommand{\bn}{\boldsymbol{n}}
\newcommand{\cA}{\mathcal{A}}
\newcommand{\BB}{\mathbb{B}}
\newcommand{\dif}{\mathrm{d}}
\newcommand{\emp}{\varnothing}
\newcommand{\fH}{\mathfrak{H}}
\newcommand{\bh}{\boldsymbol{h}}
\newcommand{\bk}{\boldsymbol{k}}
\newcommand{\bl}{\boldsymbol{l}}
\newcommand{\bm}{\boldsymbol{m}}
\newcommand{\bx}{\boldsymbol{x}}
\newcommand{\bxi}{\boldsymbol{\xi}}
\newcommand{\bz}{\boldsymbol{z}}
\newcommand{\Li}{\mathrm{Li}}
\newcommand{\ii}{\mathrm{I}}
\newcommand{\PP}{\mathcal{P}}
\newcommand{\QQ}{\mathbb{Q}}
\newcommand{\rL}{\mathrm{L}}
\newcommand{\ZZ}{\mathbb{Z}}
\newcommand{\CC}{\mathbb{C}}
\newcommand{\dep}{\mathrm{dep}}
\newcommand{\wt}{\mathrm{wt}}
\newcommand{\sh}{\mathbin{\mathcyr{sh}}}
\numberwithin{equation}{section}
\address{Institute for Advanced Research, Nagoya University, Furo-cho, Chikusa-ku, Nagoya, 464-8602, Japan}
\email{minoru.hirose@math.nagoya-u.ac.jp}
\address{Faculty of Mathematics, Kyushu University, Motooka 744, Nishi-ku, Fukuoka, 819-0395, Japan}
\email{matsusaka@math.kyushu-u.ac.jp}
\address{Department of Mathematical Sciences, Aoyama Gakuin University, 5-10-1 Fuchinobe, Chuo-ku, Sagamihara, Kanagawa, 252-5258, Japan}
\email{seki@math.aoyama.ac.jp}
\thanks{This research was supported by JSPS KAKENHI Grant Numbers JP18K13392, JP22K03244 (Hirose), JP20K14292, JP21K18141, JP24K16901 (Matsusaka), and JP21K13762 (Seki).}
\keywords{Multiple polylogarithm, Iterated integral}
\title[]{A discretization of the iterated integral expression of the multiple polylogarithm}
\author[]{Minoru Hirose, Toshiki Matsusaka, and Shin-ichiro Seki}
\date{}
\begin{document}
% --------------------------------------------------------------------------
\maketitle
\begin{abstract}
Recently, Maesaka, Watanabe, and the third author discovered a phenomenon where the iterated integral expressions of multiple zeta values become discretized.
In this paper, we extend their result to the case of multiple polylogarithms and provide two proofs.
The first proof uses the method of connected sums, while the second employs induction based on the difference equations that discrete multiple polylogarithms satisfy.
We also investigate several applications of our main result.
\end{abstract}
% --------------------------------------------------------------------------
\section{Introduction}\label{sec:intro}
% --------------------------------------------------------------------------
The \emph{multiple polylogarithm} (MPL) $\Li_{\bk}^{\sh}(\bz)$ is defined as
\[
\Li_{\bk}^{\sh}(\bz)\coloneqq\sum_{0<n_1<\cdots<n_r}\frac{1}{n_1^{k_1}\cdots n_r^{k_r}}\prod_{i=1}^{r}\left(\frac{z_{i+1}}{z_i}\right)^{n_i},
\]
where a tuple $\bk=(k_1,\dots,k_r)$, consisting of positive integers, is called an index, $\bz=(z_1,\dots,z_r)$ is a tuple of complex numbers, and $z_{r+1}\coloneqq 1$.
For convergence, we assume that each $|z_i|\geq 1$ and $(k_r,z_r)\neq (1,1)$.
For the index $\bk=(k_1,\dots,k_r)$, we define its depth as $\dep(\bk)\coloneqq r$ and its weight as $\wt(\bk)\coloneqq k_1+\cdots+k_r$.
When all $z_i=1$, we denote $\Li_{\bk}(\{1\}^r)$ by $\zeta(\bk)$ and call it the \emph{multiple zeta value} (MZV).
Here, $\{1\}^r$ is a shorthand notation indicating that the number $1$ is repeated $r$ times.
An index $\bk$ that satisfies the convergence condition $k_r\geq 2$ is called an admissible index.
In the notation of iterated integrals
\[
\int_0^1\frac{\dif t}{t-a_1}\circ\cdots\circ\frac{\dif t}{t-a_k}\coloneqq\int_{0<t_1<\cdots<t_k<1}\frac{\dif t_1}{t_1-a_1}\cdots\frac{\dif t_k}{t_k-a_k},
\]
for $\bk$ and $\bz$, we define $\ii_{\bk}(\bz)$ as
\[
\ii_{\bk}(\bz)\coloneqq\int_0^1\underbrace{\frac{\dif t}{t-z_1}\circ\frac{\dif t}{t}\circ\cdots\circ\frac{\dif t}{t}}_{k_1\text{ times}}\circ\cdots\circ\underbrace{\frac{\dif t}{t-z_r}\circ\frac{\dif t}{t}\circ\cdots\circ\frac{\dif t}{t}}_{k_r\text{ times}}.
\]
Then, the following iterated integral expression of the MPL holds (\cite[Theorem~0.16]{Goncharov1995}, \cite[Theorem~2.2]{Goncharov-pre}):
\begin{equation}\label{eq:IIEforMPL}
\Li_{\bk}^{\sh}(\bz)=(-1)^{\dep(\bk)}\ii_{\bk}(\bz).
\end{equation}
In particular, the iterated integral expression of the MZV,
\begin{equation}\label{eq:IIEforMZV}
\zeta(\bk)=(-1)^{\dep(\bk)}\ii_{\bk}(\{1\}^{\dep(\bk)}),
\end{equation}
is obtained.
Recently, a discretization phenomenon of this expression was discovered by Maesaka, Watanabe, and the third author.
Let $N$ be a positive integer.
\begin{theorem}[Maesaka--Seki--Watanabe~\cite{MaesakaSekiWatanabe-pre}]\label{thm:MSW}
For any index $\bk=(k_1,\dots,k_r)$ and any $N$, we have
\[
\sum_{0<n_1<\cdots<n_r<N}\frac{1}{n_1^{k_1}\cdots n_r^{k_r}}=(-1)^r\sum_{\substack{0<n_{j,1}\leq\cdots\leq n_{j,k_j} < N \ (1\leq j\leq r)\\ n_{j,k_j}<n_{j+1,1} \ (1\leq j<r)}}\prod_{j=1}^r\frac{1}{(n_{j,1}-N)n_{j,2}\cdots n_{j,k_j}}.
\]
\end{theorem}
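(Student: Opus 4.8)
The plan is to prove the identity by the method of connected sums, interpolating between the two sides by converting the blocks of the right-hand sum into the summation form of the left-hand side one block at a time. For $0\le s\le r$ I would introduce a connected sum $Z_s$ in which the bottom $s$ blocks are kept in the discretized shape of the right-hand side (each carrying a factor $\frac{1}{(n_{j,1}-N)n_{j,2}\cdots n_{j,k_j}}$ with the internal order $n_{j,1}\le\cdots\le n_{j,k_j}$ and $n_{j,k_j}<n_{j+1,1}$), while the top $r-s$ blocks are kept in the series shape $\frac{1}{n_{s+1}^{k_{s+1}}}\cdots\frac{1}{n_r^{k_r}}$ with $n_{s+1}<\cdots<n_r<N$. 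These two groups are glued along a single interface integer by a connector $C_N$ depending on the top discretized variable $n_{s,k_s}$, the bottom series variable $n_{s+1}$, and $N$. By construction $Z_0$ is exactly the left-hand side (empty discrete part, trivial connector) and $Z_r$ is exactly the right-hand side, the factors $\frac{1}{n_{j,1}-N}$ already carrying the signs, so it suffices to prove $Z_0=Z_1=\cdots=Z_r$.

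The heart of the argument is a single transport relation giving $Z_s=Z_{s+1}$: it converts the bottom-most series block, a variable $n_{s+1}$ with weight $\frac{1}{n_{s+1}^{k_{s+1}}}$ summed against the connector, into a chain of discretized variables $n_{s+1,1}\le\cdots\le n_{s+1,k_{s+1}}$ carrying $\frac{1}{(n_{s+1,1}-N)n_{s+1,2}\cdots n_{s+1,k_{s+1}}}$ together with a new connector. I would establish this relation by repeated partial-fraction expansion and summation by parts in one variable at a time, each step peeling off one of the $k_{s+1}$ one-forms. In this peeling a form $\frac{\mathrm dt}{t}$ produces a factor $\frac1n$ together with a non-strict inequality $\le$, whereas the single form $\frac{\mathrm dt}{t-1}$ at the foot of the block produces $\frac{1}{n-N}=-\frac{1}{N-n}$ together with a strict inequality $<$ to the block below. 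This is precisely the source of the mixed $\le/<$ pattern in the statement and of one factor $-1$ per block, which accumulates to the global sign $(-1)^r$.

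The main obstacle is to pin down the correct connector $C_N$ so that the transport relation telescopes cleanly, i.e.\ so that every boundary term produced by the summations either cancels or is reabsorbed into the shifted connector, and then to verify that relation as a finite identity; I expect this to be the only genuinely delicate computation, everything else being bookkeeping of inequalities and signs. As an alternative, and as a consistency check, I would set up an induction on the weight using the difference equations in a running upper bound $m<N$ satisfied by the discretized sums: peeling the top one-form gives $I(m+1)-I(m)=\frac1m I'(m+1)$ for a $\frac{\mathrm dt}{t}$ form and $I(m+1)-I(m)=\frac{1}{m-N}I'(m)$ for the $\frac{\mathrm dt}{t-1}$ form, with $I(1)=0$. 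The subtlety here is that the discretized sum and the series truncation agree only at $m=N$ and not for intermediate $m$, since $N$ is built into the factors $n-N$, so the induction cannot be run on the bare identity; one must instead carry along the full two-variable family, or equivalently prove the statement for the discrete multiple polylogarithm with general $\bz$ and specialize to $\bz=(1,\dots,1)$ at the end.
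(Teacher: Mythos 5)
Your overall strategy---a connected sum interpolating between the two sides, with series blocks converted into discretized blocks one at a time by a transport relation---is exactly the route the paper takes (it proves the general \cref{thm:main} this way and obtains the present statement by setting all $x_i=1$). But your proposal stops precisely where the proof begins: you explicitly leave the connector unspecified and defer the verification of the transport relation, and these two items \emph{are} the proof. The paper's connector is $C_N(n,m)=\binom{m}{n}\big/\binom{N-1}{n}$ (the $x=1$ specialization of $C_N^{(x)}(n,m)=\binom{m}{n}/\binom{Nx-1}{n}$), and the entire argument reduces to the elementary identities of \cref{lem:Trans}, namely $\tfrac{1}{n}C_N(n,m)=\sum_{n\le b\le m}C_N(n,b)\tfrac{1}{b}$ and $\sum_{n<a\le m}C_N(a,m)\tfrac{1}{m}=C_N(n,m-1)\tfrac{1}{N-m}$: the first is applied $k_j$ times to peel off the factor $n_j^{-k_j}$, and the second converts the leftover summation into the factor $1/(N-m)=-1/(m-N)$, which is the source of the $\le/<$ pattern and of the one sign per block that you correctly anticipate. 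Without exhibiting such a connector and checking these identities, what you have is a plan, not a proof.

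Two further points would need repair even after a connector is found. First, your orientation is reversed relative to the paper's: you place the discretized blocks at the bottom (small variables) and the series blocks at the top, whereas the transport relations above require the series variables to be the small ones and the discrete variables the large ones, with $C_N(n_r,m_{1,1}-1)$ tying the largest series variable to the smallest discrete one; a connector adapted to your orientation is not supplied by the paper's lemma and would have to be discovered afresh, since the reflection $n\mapsto N-n$ carries the identity to its dual rather than to itself. Second, your assertion that $Z_0$ ``is exactly the left-hand side with trivial connector'' hides a genuine boundary issue: with the connector above, the endpoint of the telescoping is the modified sum of \cref{thm:modified_main}, which carries a residual factor $\binom{N}{n_r}/\binom{N-1}{n_r}=N/(N-n_r)\neq 1$ over the range $n_r\le N$, and one must perform a final reindexing ($N\mapsto N-1$, together with a rescaling of the variables in the general case) to land on the statement as written. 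Your alternative sketch via difference equations has the same status: you rightly observe that the induction must be run on the full multi-variable family (this is the paper's second proof of \cref{thm:main}), but you write down neither the difference equations nor the argument, via the periodicity $F(x_1+N^{-1})=F(x_1)$ and the absence of poles, that fixes the constant of summation.
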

The sum on the right-hand side is a Riemann sum of the iterated integral, and taking the limit of both sides as $N\to\infty$, when $\bk$ is admissible, yields \eqref{eq:IIEforMZV}.
\cref{thm:MSW} asserts that the integral expression remains valid in a discrete form even before taking the limit.

Building upon this work, it is desired to investigate how widely such a discretization phenomenon of ``series = integral'' type expressions holds. Yamamoto \cite{Yamamoto-pre} has already shown that the integral expressions associated with the 2-posets of the multiple zeta-star values given in \cite{Yamamoto2017}, as well as those extended to the Schur multiple zeta values of diagonally constant indices provided by the first author, Murahara, and Onozuka in \cite{HiroseMuraharaOnozuka-pre}, are similarly discretized.

In this paper, we provide a discretization of the iterated integral expression of the MPL \eqref{eq:IIEforMPL}.
In the following, when dealing with finite sums, we describe them using indeterminates $x_i$ instead of complex numbers $z_i$.

As mentioned in \cite{MaesakaSekiWatanabe-pre}, for example, we can consider the elementary equation
\begin{equation}\label{eq:MSWexample}
\sum_{n=1}^{2N-1}\frac{(-1)^{n-1}}{n}=\sum_{n=0}^{N-1}\frac{1}{n+N}
\end{equation}
as a discretization of 
\[
\log 2=\sum_{n=1}^{\infty}\frac{(-1)^{n-1}}{n}=\int_0^1\frac{\mathrm{d}t}{t+1}.
\]
It seems to be natural to introduce the following approximate finite sums for $\Li_{\bk}^{\sh}(\bz)$ and $\ii_{\bk}(\bz)$, respectively: for $\bk=(k_1,\dots, k_r)$ and $\bx=(x_1,\dots, x_r)$ ($x_{r+1}\coloneqq 1$),
\begin{align*}
&\Li_{\bk}^{\sh,< N}(\bx)\coloneqq\sum_{0<n_1<\cdots<n_r< N}\frac{1}{n_1^{k_1}\cdots n_r^{k_r}}\prod_{i=1}^{r}\left(\frac{x_{i+1}}{x_i}\right)^{n_i},\\
&\ii_{\bk}^{(N)}(\bx)\coloneqq\sum_{\substack{0< n_{j,1}\leq\cdots\leq n_{j,k_j}< N \ (1\leq j\leq r)\\ n_{j,k_j}<n_{j+1,1} \ (1\leq j<r)}}\prod_{j=1}^r\frac{1}{(n_{j,1}-Nx_j)n_{j,2}\cdots n_{j,k_j}}.
\end{align*}
However, comparing the values of $\Li_{\bk}^{\sh,<N}(\bx)$ and $(-1)^{\dep(\bk)}\ii_{\bk}^{(N)}(\bx)$, while adjusting the sum ranges with \eqref{eq:MSWexample} in mind, generally does not reveal a simple relationship.
On the other hand, by modifying the definition of $\Li_{\bk}^{\sh,<N}(\bx)$, for example, the following equation can be found: for a positive integer $k$,
\[
\sum_{n=1}^{N-1}\frac{(-1)^{n-1}}{n^k}\frac{\binom{N-1}{n}}{\binom{N+n}{n}}=\sum_{0 < n_1\leq\cdots\leq n_k < N}\frac{1}{(n_1+N)n_2\cdots n_k}
\]
or equivalently
\[
\sum_{n=1}^{N-1}\frac{1}{n^k}\frac{\binom{N-1}{n}}{\binom{-N-1}{n}}=-\sum_{0< n_1\leq\cdots\leq n_k< N}\frac{1}{(n_1+N)n_2\cdots n_k}.
\]
By taking the limit of this equation as $N\to\infty$, the expression $\Li_{k}^{\sh}(-1)=-\ii_k(-1)$ can be obtained (cf.~\cite[Lemma~4.2]{LinebargerZhao2015}).
In fact, this can be fully extended, and the following is our main result.
The generalized binomial coefficient $\binom{x}{n}$ as a polynomial is defined in the usual way for non-negative integer $n$: $\binom{x}{n}\coloneqq\frac{x(x-1)\cdots(x-n+1)}{n(n-1)\cdots 1}$ ($n\geq 1$) and $\binom{x}{0}\coloneqq1$.
\begin{theorem}[Discretization of the iterated integral expression of the MPL]\label{thm:main}
Let $\bk=(k_1,\dots, k_r)$ be an index and $\bx=(x_1,\dots, x_r)$ a tuple of indeterminates with $x_{r+1} = 1$.
Then, for any positive integer $N$, we have
\begin{align*}
&\sum_{0<n_1<\cdots<n_r<N}\frac{1}{n_1^{k_1}\cdots n_r^{k_r}}\prod_{i=1}^{r}\frac{\binom{Nx_{i+1}-1}{n_i}}{\binom{Nx_i-1}{n_i}}\\
&=(-1)^r\sum_{\substack{0< n_{j,1}\leq\cdots\leq n_{j,k_j}< N \ (1\leq j\leq r)\\ n_{j,k_j}<n_{j+1,1} \ (1\leq j<r)}}\prod_{j=1}^r\frac{1}{(n_{j,1}-Nx_j)n_{j,2}\cdots n_{j,k_j}}.
\end{align*}
\end{theorem}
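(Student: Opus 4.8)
\section*{Proof proposal}

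The plan is to prove the identity by the method of connected sums, the first of the two proofs advertised in the abstract. The idea is to interpolate between the two sides through a family of connected sums $Z_{j_0}$ ($0\le j_0\le r$) in which the first $j_0$ blocks (those carrying the smallest summation variables) have already been rewritten in the integral, right-hand form, while the remaining blocks $j_0+1,\dots,r$ are still in the series, left-hand form, the two pieces being glued at their single interface by a suitable connector. The series blocks are taken to include the binomial weights, so that $Z_0$ (integral part empty, connector $=1$) is exactly the left-hand side and $Z_r$ (series part empty, connector $=1$) is exactly the right-hand side; the whole proof then reduces to showing $Z_{j_0-1}=Z_{j_0}$ for each $j_0$.

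First I would isolate the base identity, the depth-one, weight-one case, namely
\[
\sum_{n=1}^{N-1}\frac{1}{n}\,\frac{\binom{N-1}{n}}{\binom{Nx-1}{n}}=\sum_{n=1}^{N-1}\frac{1}{Nx-n},
\]
which already captures the mechanism by which the binomial weight on the left turns into the shifted denominator $1/(n-Nx)$ on the right. This follows from the elementary relation
\[
\frac{\binom{N-1}{n}}{\binom{Nx-1}{n}}\,(Nx-n)=\frac{\binom{N-1}{n-1}}{\binom{Nx-1}{n-1}}\,(N-n),
\]
equivalently from the partial-fraction expansion of $1/\binom{Nx-1}{n}$. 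Next comes the connector. When block $j_0$ is converted, its own base point $Nx_{j_0}$ is consumed in the factor $1/(n_{j_0,1}-Nx_{j_0})$, whereas its numerator $\binom{Nx_{j_0+1}-1}{\cdot}$, whose base matches the denominator $\binom{Nx_{j_0+1}-1}{\cdot}$ of block $j_0+1$, should survive into the glue. This suggests, between the largest already-converted variable $a=n_{j_0,k_{j_0}}$ and the current series variable $b=n_{j_0+1}$, a connector of the shape
\[
C_{j_0}(a,b)=\frac{\binom{Nx_{j_0+1}-1}{a}}{\binom{Nx_{j_0+1}-1}{b}}\cdot(\text{Beta-type correction in }a,b),
\]
the correction being the finite analogue of $\Gamma(a)\Gamma(b)/\Gamma(a+b)$ that makes discrete integration by parts telescope and that degenerates to $1$ when either slot is empty.

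The two transport relations I would then establish are: (i) an \emph{integration step}, peeling one plain integration, which lowers a power $1/b^{\ell}$ of the current series variable to $1/b^{\ell-1}$ while creating a new integral variable $1/m$ and updating the connector, and which rests on a discrete summation-by-parts identity; and (ii) a \emph{boundary step}, which consumes the binomial denominator $\binom{Nx_{j_0+1}-1}{b}$ via the base identity above, replaces the last power $1/b$ by the shifted factor $1/(n_{j_0+1,1}-Nx_{j_0+1})$, transfers the surviving numerator $\binom{Nx_{j_0+2}-1}{\cdot}$ into the next connector, and contributes a factor $-1$. Converting block $j_0+1$ then amounts to $k_{j_0+1}-1$ integration steps followed by one boundary step, so that iterating over all blocks telescopes $Z_0$ into $Z_r$, the sign $(-1)^r$ accumulating one factor from each boundary step; the factors $\binom{Nx_{i+1}-1}{n_i}$ guarantee that the boundary terms at the ends of each range vanish.

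The main obstacle is the discovery and verification of the connector: guessing the precise Beta-type correction so that relations (i) and (ii) hold simultaneously, and checking that the generalized binomial coefficients, now polynomials in the indeterminates $x_i$ rather than positive quantities, combine correctly across every interface and within each block. As an alternative, avoiding the connector entirely, I would note the second proof: one introduces discrete multiple polylogarithms with a free upper bound and free base points, shows that both sides satisfy the same first-order difference equation obtained by peeling the innermost summation, matches them using the weight-one base case above, and concludes by a double induction on the weight and the depth.
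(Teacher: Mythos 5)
Your high-level strategy --- interpolating between the two sides by a connected sum and transporting one block at a time via transport relations --- is indeed the paper's first proof, and your base identity $\sum_{n=1}^{N-1}\frac{1}{n}\frac{\binom{N-1}{n}}{\binom{Nx-1}{n}}=\sum_{n=1}^{N-1}\frac{1}{Nx-n}$ is a correct special case. But the proposal has a genuine gap exactly where you locate it yourself: the connector and the transport relations are never produced, and these \emph{are} the proof; everything else is scaffolding. Moreover, two structural choices in your plan point away from the version that actually closes up. First, the direction of transport is reversed: you convert the blocks carrying the \emph{smallest} variables first, whereas the paper transports from the top, i.e.\ $Z(k_1,\dots,k_r\mid\,)=Z(k_1,\dots,k_{r-1}\mid k_r)=\cdots=Z(\,\mid k_1,\dots,k_r)$, so that at every stage the still-series blocks carry the small variables and the already-integral blocks carry the large ones, with a single connector $C_N^{(x)}(n,m)=\binom{m}{n}\big/\binom{Nx-1}{n}$ sitting between the largest series variable $n$ and (one less than) the smallest integral variable. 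Second, your guessed connector $\binom{Nx_{j_0+1}-1}{a}\big/\binom{Nx_{j_0+1}-1}{b}$ times a Beta-type correction has both binomials sharing the same upper argument and differing lower arguments; the connector that works is the opposite shape (same lower argument $n$, upper arguments $m$ and $Nx-1$), and no separate Beta factor is needed. The three identities that drive the telescoping are $\frac{1}{n}C_N^{(x)}(n,m)=\sum_{n\le b\le m}C_N^{(x)}(n,b)\frac{1}{b}$ (your integration step), $\sum_{n<a\le m}C_N^{(x)}(a,m)\frac{1}{m}=C_N^{(x)}(n,m-1)\frac{1}{Nx-m}$ (your boundary step, which also sums out the old series variable $a$), and crucially a third relation $\frac{\binom{Nx'-1}{n}}{\binom{Nx-1}{n}}C_N^{(x')}(n,m-1)=C_N^{(x)}(n,m-1)$ that changes the base point of the connector; this is how the ratio $\binom{Nx_{i+1}-1}{n_i}\big/\binom{Nx_i-1}{n_i}$ is absorbed, not by killing boundary terms (for generic $x_{i+1}$ that binomial vanishes nowhere). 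Your plan has no analogue of this base-change step, and without it the binomial weights do not propagate across interfaces.

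Two further points. The telescoping runs cleanly only for a modified statement in which all ranges are $\le N$ and the outermost weight is $\binom{N}{n_r}\big/\binom{Nx_r-1}{n_r}$; the theorem as stated is then recovered by substituting $x_j\mapsto x_j(1+1/N)$ and replacing $N$ by $N-1$, a reduction your proposal does not anticipate. Finally, your fallback via difference equations is a legitimate alternative (it is the paper's second proof), but note that matching the two solutions of the common difference equation in $x_1$ is not done by a boundary value: one shows the difference $F(x_1)$ is periodic under $x_1\mapsto x_1+N^{-1}$, is a proper rational function with poles only at $x_1=n/N$ for $0<n<N$, and hence would have a pole at $x_1=0$ unless it vanishes identically. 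As written, your proposal is a plausible research plan rather than a proof.
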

By abbreviating the left-hand side as $\widetilde{\Li}_{\bk}^{\sh,(N)}(\bx)$, it can be expressed as
\begin{equation}\label{eq:main}
\widetilde{\Li}_{\bk}^{\sh,(N)}(\bx)=(-1)^{\dep(\bk)}\ii_{\bk}^{(N)}(\bx).
\end{equation}
For a fixed $n_i$'s, it is clear that
\begin{equation}\label{eq:term-wise-limit}
\lim_{N\to\infty}\prod_{i=1}^{r}\frac{\binom{Nz_{i+1}-1}{n_i}}{\binom{Nz_i-1}{n_i}}=\prod_{i=1}^{r}\left(\frac{z_{i+1}}{z_i}\right)^{n_i}.
\end{equation}
Hence, when $\bk$ and $\bz$ satisfy the convergence condition and after substituting $x_i=z_i$, we expect that the limit of \eqref{eq:main} as $N \to \infty$ yields \eqref{eq:IIEforMPL}.
Strictly speaking, \eqref{eq:term-wise-limit} alone is insufficient as justification; however this is justified by \cref{prop:asymptotic_for_modifiedMPL}.

By substituting $1$ for all $x_i$ in \eqref{eq:main}, it becomes exactly \cref{thm:MSW}.
Similar to the proof of \cref{thm:MSW} in \cite{MaesakaSekiWatanabe-pre}, \cref{thm:main} can be proved using the method of connected sums.

Let $\stirlingI{n}{j}$ denote the (unsigned) Stirling number of the first kind defined by $x(x+1)\cdots (x+n-1)=\sum_{j=0}^n\stirlingI{n}{j}x^j$.
From our main result, as a discretization of 
\[
\frac{\pi}{4}=\sum_{n=0}^{\infty}\frac{(-1)^n}{2n+1}=\int_0^1\frac{\dif t}{t^2+1},
\]
we have
\begin{equation}\label{eq:pi/4}
\sum_{n=1}^{N-1}\frac{a_n^{(N)}}{n}=\sum_{n=1}^{N-1}\frac{N}{n^2+N^2},
\end{equation}
where
\[
a_n^{(N)}\coloneqq\left(\prod_{i=1}^n\frac{N-i}{i^2+N^2}\right)\sum_{0\leq j<n/2}(-1)^{n+j+1}\stirlingI{n+1}{2j+2}N^{2j+1}
\]
and
\[
\lim_{N\to\infty}a_n^{(N)}=\begin{cases}(-1)^{\frac{n-1}{2}} & \text{if } n \text{ is odd},\\ 0 & \text{if } n \text{ is even.}\end{cases}
\]
See \cref{sec:Misc} for the proof.
In this way, it has observed that in the given ``series = integral'' expression, the expression can sometimes be discretized not merely by truncating the range of summation for the series, but also by replacing the summand with one that asymptotically approaches the original summand as $N\to\infty$.

It should be noted that, unlike MZVs, MPLs can be considered as functions with variables, which makes them differentiable.
In fact, it is known that MPLs satisfy the following differential formula due to Goncharov.
\begin{theorem}[{Goncharov~\cite[Theorem~2.1]{Goncharov-pre}}]\label{thm:Goncharov_diff}
Let $\bk=(k_1,\dots, k_r)$ be an index and $\bz=(z_1,\dots,z_r)$ complex variables with $|z_i|>1$.
We use the following abbreviated notation for $1\leq i\leq r$$:$ 
\begin{align*}
\bk^{\wedge}_{i}&\coloneqq(k_1,\dots,k_{i-1},k_{i+1},\dots,k_r),\\ 
\bk^{\downarrow}_{i}&\coloneqq(k_1,\dots,k_{i-1},k_i-1,k_{i+1},\dots,k_r) \quad (k_i>1), \\
\bz^{\wedge}_{i}&\coloneqq(z_1,\dots,z_{i-1},z_{i+1},\dots,z_r).
\end{align*}
Then the following holds for each case.
\begin{enumerate}[leftmargin=6mm,font=\normalfont] 
\item\label{it:Goncharov_dif_1} $i=1$.
    \begin{enumerate}[leftmargin=6mm,font=\normalfont] 
    \item\label{it:Goncharov_dif_1a} $k_1>1$.
    \[
    \frac{\partial}{\partial z_1}\ii_{\bk}(\bz)=-\frac{1}{z_1}\ii_{\bk^{\downarrow}_1}(\bz).
    \]
    \item\label{it:Goncharov_dif_1b} $r=1$ and $k_1=1$.
    \[
    \frac{\partial}{\partial z_1}\ii_{\bk}(\bz)=\frac{1}{z_1-1}-\frac{1}{z_1}.
    \]
    \item\label{it:Goncharov_dif_1c} $r>1$ and $k_1=1$.
    \[
    \frac{\partial}{\partial z_1}\ii_{\bk}(\bz)=-\frac{1}{z_1}\ii_{\bk^{\wedge}_1}(\bz^{\wedge}_1)+\frac{1}{z_1-z_2}\left(\ii_{\bk^{\wedge}_1}(\bz^{\wedge}_1)-\ii_{\bk^{\wedge}_1}(\bz^{\wedge}_2)\right).
    \]
    \end{enumerate}
\item\label{it:Goncharov_dif_2} $r>1$ and $1<i<r$.
    \begin{enumerate}[leftmargin=6mm,font=\normalfont]
    \item\label{it:Goncharov_dif_2a} $k_{i-1}>1$, $k_i>1$.
    \[
    \frac{\partial}{\partial z_i}\ii_{\bk}(\bz)=\frac{1}{z_i}\left(\ii_{\bk^{\downarrow}_{i-1}}(\bz)-\ii_{\bk^{\downarrow}_i}(\bz)\right).
    \]
    \item\label{it:Goncharov_dif_2b} $k_{i-1}>1$, $k_i=1$.
    \[
    \frac{\partial}{\partial z_i}\ii_{\bk}(\bz)=\frac{1}{z_i}\left(\ii_{\bk^{\downarrow}_{i-1}}(\bz)-\ii_{\bk^{\wedge}_{i}}(\bz^{\wedge}_i)\right)+\frac{1}{z_i-z_{i+1}}\left(\ii_{\bk^{\wedge}_{i}}(\bz^{\wedge}_{i})-\ii_{\bk^{\wedge}_{i}}(\bz^{\wedge}_{i+1})\right).
    \]
    \item\label{it:Goncharov_dif_2c} $k_{i-1}=1$, $k_i>1$.
    \[
    \frac{\partial}{\partial z_i}\ii_{\bk}(\bz)=\frac{1}{z_i-z_{i-1}}\left(\ii_{\bk^{\wedge}_{i-1}}(\bz^{\wedge}_{i-1})-\ii_{\bk^{\wedge}_{i-1}}(\bz^{\wedge}_{i})\right)+\frac{1}{z_i}\left(\ii_{\bk^{\wedge}_{i-1}}(\bz^{\wedge}_{i})-\ii_{\bk^{\downarrow}_{i}}(\bz)\right).
    \]
    \item\label{it:Goncharov_dif_2d} $k_{i-1}=k_i=1$.
    \[
    \frac{\partial}{\partial z_i}\ii_{\bk}(\bz)=\frac{1}{z_i-z_{i-1}}\left(\ii_{\bk^{\wedge}_{i-1}}(\bz^{\wedge}_{i-1})-\ii_{\bk^{\wedge}_{i}}(\bz^{\wedge}_{i})\right)+\frac{1}{z_i-z_{i+1}}\left(\ii_{\bk^{\wedge}_{i}}(\bz^{\wedge}_{i})-\ii_{\bk^{\wedge}_{i}}(\bz^{\wedge}_{i+1})\right).
    \]
    \end{enumerate}
\item\label{it:Goncharov_dif_3} $i=r>1$.
By interpreting as $z_{r+1}=1$ and $\ii_{\bk^{\wedge}_r}(\bz^{\wedge}_{r+1})=0$, formulas identical to \eqref{it:Goncharov_dif_2} hold.
\end{enumerate}
\end{theorem}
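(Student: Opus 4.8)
The plan is to differentiate the iterated integral directly under the integral sign and then integrate by parts in the relevant variable. Write $n=\wt(\bk)$, and let $(a_1,\dots,a_n)$ be the sequence of parameters defining $\ii_{\bk}(\bz)$, so that each block $j$ contributes $z_j$ followed by $k_j-1$ copies of $0$; the variable $z_i$ then occurs only in the single form sitting at the position $p=1+\sum_{j<i}k_j$. Since $|z_i|>1$, every form is smooth on $[0,1]$ and differentiation under the integral sign is legitimate. The starting point is the identity
\[
\frac{\partial}{\partial z_i}\frac{1}{t_p-z_i}=\frac{1}{(t_p-z_i)^2}=-\frac{\partial}{\partial t_p}\frac{1}{t_p-z_i},
\]
which converts the $z_i$-derivative into a $t_p$-derivative. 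As all other forms are independent of $t_p$, integrating this total derivative over $t_p\in(t_{p-1},t_{p+1})$ (with the endpoint conventions $t_0=0$ and $t_{n+1}=1$) leaves only the two boundary contributions
\[
\frac{\partial}{\partial z_i}\ii_{\bk}(\bz)=-\int_{\Delta'}\left(\frac{1}{t_{p+1}-z_i}-\frac{1}{t_{p-1}-z_i}\right)\prod_{m\neq p}\frac{\dif t_m}{t_m-a_m},
\]
where $\Delta'$ is the simplex on the remaining $n-1$ variables.

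Next I would simplify each boundary term by partial fractions. At the right boundary the factor $\frac{1}{t_{p+1}-z_i}$ meets the existing form $\frac{1}{t_{p+1}-a_{p+1}}$, and at the left boundary $\frac{1}{t_{p-1}-z_i}$ meets $\frac{1}{t_{p-1}-a_{p-1}}$. The shape of the outcome is governed entirely by the neighbouring parameters $a_{p\pm1}$, which is precisely the source of the case distinction. If $k_i>1$ then $a_{p+1}=0$ and the decomposition $\frac{1}{(t-z_i)t}=\frac{1}{z_i}\left(\frac{1}{t-z_i}-\frac{1}{t}\right)$ produces the factor $\frac{1}{z_i}$; if $k_i=1$ then $a_{p+1}=z_{i+1}$ (or the endpoint $1$ when $i=r$) and one instead obtains the factor $\frac{1}{z_i-z_{i+1}}$. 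The left boundary behaves symmetrically according to whether $k_{i-1}>1$ or $k_{i-1}=1$, while for $i=1$ the left neighbour is the endpoint $0$, giving the clean term $-\frac{1}{z_1}$ with no partial fraction at all. In each partial fraction one summand restores a form $0$ (respectively $z_{i\pm1}$) at the merged position and the other installs the form $z_i$; after relabelling, every resulting integral is recognised as an $\ii$ of one of the reduced data $\bk^{\downarrow}_{i-1}$, $\bk^{\downarrow}_i$, $\bk^{\wedge}_i$ with the appropriate variable tuple $\bz$, $\bz^{\wedge}_i$, or $\bz^{\wedge}_{i+1}$.

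The decisive simplification is a cancellation. When both neighbours carry the parameter $0$ (the case $k_{i-1},k_i>1$), the two ``restored-$0$'' integrals coming from the left and right boundaries are literally the same integral, in which the trailing zeros of block $i-1$ have fused with those of block $i$, and they occur with opposite signs; they therefore annihilate and leave exactly $\frac{1}{z_i}\bigl(\ii_{\bk^{\downarrow}_{i-1}}(\bz)-\ii_{\bk^{\downarrow}_i}(\bz)\bigr)$. In the mixed and endpoint cases no such merged pair is available; instead the shared term $\ii_{\bk^{\wedge}_i}(\bz^{\wedge}_i)$ splits between a $\frac{1}{z_i}$ coefficient and a $\frac{1}{z_i-z_{i\pm1}}$ coefficient, reproducing the two-fraction shape of the remaining formulas. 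Carrying this out in each configuration of $(k_{i-1},k_i)$ and of the position of $i$, and using the conventions $z_{r+1}=1$ and $\ii_{\bk^{\wedge}_r}(\bz^{\wedge}_{r+1})=0$ to absorb the upper endpoint when $i=r$, yields all the stated identities; the degenerate case $r=1$, $k_1=1$ is simply the direct evaluation of both endpoint terms with $\Delta'$ empty.

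The differentiation under the integral and the integration by parts are harmless; the genuine work, and the main obstacle, is the disciplined bookkeeping of the reduced parameter sequences together with the verification that the ``restored'' terms cancel or recombine in exactly the pattern each case predicts. In particular one must track carefully how deleting position $p$ fuses the trailing zeros of block $i-1$ with those of block $i$, so that a restored $0$ really does yield a single well-defined iterated integral of the asserted type, and how the boundary contributions at $0$ and $1$ specialize the interior formulas.
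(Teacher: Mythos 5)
The paper does not prove this statement at all: it is quoted verbatim from Goncharov \cite[Theorem~2.1]{Goncharov-pre} and used as motivation for the discrete analogue (\cref{thm:diff}), so there is no in-paper proof to compare against. Your argument is the standard one and, as far as I can check, it is correct: differentiating under the integral sign (legitimate since $|z_i|>1$ keeps $1/(t-z_i)^2$ bounded on $[0,1]$), converting $\partial_{z_i}$ into $-\partial_{t_p}$, integrating out $t_p$ over $(t_{p-1},t_{p+1})$, and then resolving the two boundary terms by partial fractions against the neighbouring forms does reproduce every case, including the cancellation of the two ``merged-zero'' integrals in case \eqref{it:Goncharov_dif_2a} and the specialisations at the endpoints $t_0=0$, $t_{n+1}=1$. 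It is worth noting that this is exactly the continuous shadow of what the paper actually does prove in \cref{sec:diff}: the computations in \cref{thm:diff_ii_1,thm:diff_ii_2_1,thm:diff_ii_2_2,thm:diff_ii_2_3,thm:diff_ii_2_4} are your argument with the integration by parts replaced by a telescoping sum over $n_{i,1}$ and the partial fractions applied to $1/((n-Nx_i)(n-Na))$; the extra $O(N^{-1})$ terms there are precisely the boundary mismatches that vanish in the limit you are computing. One point deserves more care than you give it: in case \eqref{it:Goncharov_dif_1a} (and its analogues where the deleted position is adjacent to a leading $\frac{\dif t}{t}$ at the bottom of the simplex), the two pieces that you cancel are each individually divergent integrals (of the shape $\int_0^{t_3}\frac{\dif t_2}{t_2}\cdots$), so the cancellation must be performed on the integrand inside the single integral over $\Delta'$ before any integration is carried out; your single-integral formulation of the boundary contribution supports this, but the phrase ``the two integrals annihilate'' should be understood at the level of integrands, not of separately convergent integrals.
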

We discretize Goncharov's differential formula and determine the difference equations satisfied by discrete multiple polylogarithms.
Here, we use the following notation:
\begin{align*}
\frac{\Delta^{(N)}f(\bx)}{\Delta^{(N)}x_i}&\coloneqq\frac{f(x_1,\dots,x_{i-1},x_i+N^{-1},x_{i+1},\dots,x_r)-f(\bx)}{N^{-1}},\\
f(\bx)\big|_{x_i+N^{-1}}&\coloneqq f(x_1,\dots,x_{i-1},x_i+N^{-1},x_{i+1},\dots,x_r)=f(\bx)+\frac{1}{N}\frac{\Delta^{(N)}f(\bx)}{\Delta^{(N)}x_i}.
\end{align*}
\begin{theorem}[Discretization of Goncharov's differential formula]\label{thm:diff}
Let $\bk=(k_1,\dots, k_r)$ be an index and $\bx=(x_1,\dots,x_r)$ a tuple of indeterminates.
The symbols $\bk^{\wedge}_{i}$, $\bk^{\downarrow}_i$, and $\bx^{\wedge}_i$ are used in a manner similar to their use in \cref{thm:Goncharov_diff}.
Then the following holds for each case.
\begin{enumerate}[leftmargin=6mm,font=\normalfont]
\item\label{it:discrete_dif_1} $i=1$.
    \begin{enumerate}[leftmargin=6mm,font=\normalfont]
    \item\label{it:discrete_dif_1a} $k_1>1$.
    \[
    \frac{\Delta^{(N)}\ii_{\bk}^{(N)}(\bx)}{\Delta^{(N)}x_1}=-\frac{1}{x_1}\ii_{\bk^{\downarrow}_1}^{(N)}(\bx).
    \]
    \item\label{it:discrete_dif_1b} $r=1$ and $k_1=1$.
    \[
    \frac{\Delta^{(N)}\ii_{\bk}^{(N)}(\bx)}{\Delta^{(N)}x_1}=\frac{1}{x_1+N^{-1}-1}-\frac{1}{x_1}.
    \]
    \item\label{it:discrete_dif_1c} $r>1$ and $k_1=1$.
    \[
    \frac{\Delta^{(N)}\ii_{\bk}^{(N)}(\bx)}{\Delta^{(N)}x_1} = -\frac{1}{x_1}\ii_{\bk^{\wedge}_1}^{(N)}(\bx^{\wedge}_1)+\frac{1}{x_1+N^{-1}-x_2}\left(\ii_{\bk^{\wedge}_1}^{(N)}(\bx^{\wedge}_1)-\ii_{\bk^{\wedge}_1}^{(N)}(\bx^{\wedge}_2)\bigg|_{x_1+N^{-1}}\right).
    \]
    \end{enumerate}
\item\label{it:discrete_dif_2} $r>1$ and $1<i<r$.
    \begin{enumerate}[leftmargin=6mm,font=\normalfont]
    \item\label{it:discrete_dif_2a} $k_{i-1}>1$, $k_i>1$.
    \[
    \frac{\Delta^{(N)}\ii_{\bk}^{(N)}(\bx)}{\Delta^{(N)}x_i} = \frac{1}{x_i}\left(\ii^{(N)}_{\bk^{\downarrow}_{i-1}}(\bx)\bigg|_{x_i+N^{-1}}-\ii^{(N)}_{\bk^{\downarrow}_i}(\bx)\right).
    \]
    \item\label{it:discrete_dif_2b} $k_{i-1}>1$, $k_i=1$.
    \begin{align*}
    \frac{\Delta^{(N)} \ii_{\bk}^{(N)}(\bx)}{\Delta^{(N)} x_i} &= \frac{1}{x_i}\left(\ii_{\bk_{i-1}^\downarrow}^{(N)}(\bx)\bigg|_{x_i+N^{-1}}-\ii_{\bk_i^\wedge}^{(N)}(\bx_i^\wedge)\right)\\
    &\quad+\frac{1}{x_i+N^{-1}-x_{i+1}}\left(\ii_{\bk_i^\wedge}^{(N)}(\bx_i^\wedge) - \ii_{\bk_i^\wedge}^{(N)}(\bx_{i+1}^\wedge)\bigg|_{x_i+N^{-1}}\right)\\
    &\quad+\frac{1}{N}\frac{1}{x_i(x_i+N^{-1}-x_{i+1})}\left(\ii^{(N)}_{(\bk_{i-1}^\downarrow)_i^\wedge}(\bx_{i+1}^\wedge)\bigg|_{x_i+N^{-1}} - \ii^{(N)}_{(\bk_{i-1}^\downarrow)_i^\wedge}(\bx_i^\wedge)\right).
    \end{align*}
    \item\label{it:discrete_dif_2c} $k_{i-1}=1$, $k_i>1$.
    \begin{align*}
    \frac{\Delta^{(N)}\ii_{\bk}^{(N)}(\bx)}{\Delta^{(N)}x_i}&=\frac{1}{x_i-x_{i-1}} \left(\ii_{\bk_{i-1}^\wedge}^{(N)}(\bx_{i-1}^\wedge) - \ii_{\bk_{i-1}^\wedge}^{(N)}(\bx_i^\wedge) \right) + \frac{1}{x_i} \left(\ii_{\bk_{i-1}^\wedge}^{(N)}(\bx_i^\wedge) - \ii_{\bk_i^\downarrow}^{(N)}(\bx)\right)\\
    &\quad+\frac{1}{N}\frac{1}{x_i(x_i-x_{i-1})}\left(\ii_{(\bk^{\downarrow}_i)^{\wedge}_{i-1}}^{(N)}(\bx^{\wedge}_i)-\ii_{(\bk^{\downarrow}_i)^{\wedge}_{i-1}}^{(N)}(\bx^{\wedge}_{i-1})\right).
    \end{align*}
    \item\label{it:discrete_dif_2d} $k_{i-1}=k_i=1$.
    \begin{align*}
    \frac{\Delta^{(N)} \ii_{\bk}^{(N)}(\bx)}{\Delta^{(N)} x_i} &= \frac{1}{x_i - x_{i-1}} \left(\ii_{\bk_{i-1}^\wedge}^{(N)}(\bx_{i-1}^\wedge) - \ii_{\bk_i^\wedge}^{(N)} (\bx_i^\wedge)\right) \\
    &\quad+ \frac{1}{x_i+N^{-1}-x_{i+1}} \left(\ii_{\bk_i^\wedge}^{(N)}(\bx_i^\wedge) - \ii_{\bk_i^\wedge}^{(N)} (\bx_{i+1}^\wedge)\bigg|_{x_i+N^{-1}} \right).
\end{align*}
    \end{enumerate}
\item\label{it:discrete_dif_3} $i=r>1$.
By interpreting as $x_{r+1}=1$ and
\[
\ii_{\bk_r^\wedge}^{(N)} (\bx_{r+1}^\wedge)\bigg|_{x_r+N^{-1}}=\ii^{(N)}_{(\bk_{r-1}^\downarrow)_r^\wedge}(\bx_{r+1}^\wedge)\bigg|_{x_r+N^{-1}}=0,
\]
formulas identical to \eqref{it:discrete_dif_2} hold.
\end{enumerate}
\end{theorem}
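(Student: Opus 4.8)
The plan is to prove every case by a single direct manipulation of the defining sum of $\ii_{\bk}^{(N)}(\bx)$, exploiting that the variable $x_i$ enters the summand only through the factor $1/(n_{i,1}-Nx_i)$. Writing $G(n)\coloneqq 1/(n-Nx_i)$, the shift $x_i\mapsto x_i+N^{-1}$ turns this factor into $G(n_{i,1}-1)$, whence
\[
\frac{\Delta^{(N)}}{\Delta^{(N)}x_i}\,\frac{1}{n_{i,1}-Nx_i}=N\bigl(G(n_{i,1}-1)-G(n_{i,1})\bigr).
\]
Because every other factor is independent of $n_{i,1}$, the first step is to sum this telescoping difference over the range of $n_{i,1}$. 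If $L$ and $U$ denote the lower and upper ends of that range, the innermost summation collapses and leaves the uniform identity
\[
\frac{\Delta^{(N)}\ii_{\bk}^{(N)}(\bx)}{\Delta^{(N)}x_i}=N\sum\bigl(G(L-1)-G(U)\bigr)\cdot(\text{remaining factors}),
\]
where $L-1$ equals $0$ when $i=1$ and $n_{i-1,k_{i-1}}$ when $i>1$, while $U$ equals $n_{i,2}$ when $k_i>1$, equals $n_{i+1,1}-1$ when $k_i=1$ and $i<r$, and equals $N-1$ when $k_i=1$ and $i=r$. Case~\eqref{it:discrete_dif_1b} drops out at once from $NG(0)=-1/x_1$ and the evaluation of $NG(N-1)$.

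The second step is to treat each boundary factor by a partial fraction decomposition against the factor of the neighbouring block and then reindex. Only two elementary decompositions arise. If the neighbouring index carries a bare factor $1/n$—which happens exactly when the adjacent block has length $>1$—one uses $N/\bigl(n(n-Nx_i)\bigr)=x_i^{-1}\bigl(1/(n-Nx_i)-1/n\bigr)$, whose $1/(n-Nx_i)$ piece produces the $1/x_i$-weighted $\bk^{\downarrow}$ terms and whose $1/n$ piece is a residual ``bare'' summand. If instead the neighbouring factor has the form $1/(n-Nx_j)$—which happens when the adjacent block has length $1$—one uses the two--variable decompositions, which generate the prefactors $1/(x_i-x_{i-1})$ and $1/(x_i+N^{-1}-x_{i+1})$ and produce the $\bk^{\wedge}$ terms. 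The key bookkeeping point is that the telescoping lower end is off by one: relabelling $n_{i-1,k_{i-1}}+1$ as the new leading index of block $i$ reproduces precisely the shifted evaluation $(\,\cdot\,)\big|_{x_i+N^{-1}}$, and this is the sole source of every $\big|_{x_i+N^{-1}}$ in the statement. In cases~\eqref{it:discrete_dif_1a}, \eqref{it:discrete_dif_1c} and~\eqref{it:discrete_dif_2a} the residual bare summands coming from the two boundaries have identical summation ranges and cancel (or are simply absent), so the remaining pieces assemble directly into the claimed formulas.

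The main obstacle is the extra $N^{-1}$-terms in~\eqref{it:discrete_dif_2b} and~\eqref{it:discrete_dif_2c}. These corrections measure the failure of the reindexed boundary sums to coincide with the target discrete polylogarithms: collapsing the index $n_{i,1}$, which sat \emph{strictly} between its neighbours, produces a strict inequality, whereas the target $\ii^{(N)}$ carries the corresponding inequality in non-strict (or loosened) form, and the discrepancy is supported on the diagonal where two neighbouring indices become adjacent. I expect the delicate part to be the treatment of this diagonal: splitting it off recovers the full-range $\ii^{(N)}$ terms named in the formula, while the diagonal contribution itself, after one further partial fraction, collapses to a lower-weight difference of the form $\ii^{(N)}_{(\bk_{i-1}^{\downarrow})_i^{\wedge}}$ carrying the product prefactor $N^{-1}x_i^{-1}(x_i+N^{-1}-x_{i+1})^{-1}$ in~\eqref{it:discrete_dif_2b}, and of the form $\ii^{(N)}_{(\bk^{\downarrow}_i)^{\wedge}_{i-1}}$ carrying $N^{-1}x_i^{-1}(x_i-x_{i-1})^{-1}$ in~\eqref{it:discrete_dif_2c}; the factor $N^{-1}$ is exactly the one produced by that last partial fraction. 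Such diagonal corrections pair off and cancel when $k_{i-1}$ and $k_i$ are both $>1$ or both $=1$ (this is why~\eqref{it:discrete_dif_2d} carries no $N^{-1}$-term), and survive only when exactly one of them equals $1$. Finally, case~\eqref{it:discrete_dif_3} needs no fresh computation: the conventions $\ii^{(N)}_{\bk_r^{\wedge}}(\bx_{r+1}^{\wedge})\big|_{x_r+N^{-1}}=\ii^{(N)}_{(\bk_{r-1}^{\downarrow})_r^{\wedge}}(\bx_{r+1}^{\wedge})\big|_{x_r+N^{-1}}=0$ merely suppress the upper-boundary block at $i=r$, reducing everything to~\eqref{it:discrete_dif_2}.
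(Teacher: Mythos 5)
Your proposal is correct and follows essentially the same route as the paper's proofs of \cref{thm:diff_ii_1,thm:diff_ii_2_1,thm:diff_ii_2_2,thm:diff_ii_2_3,thm:diff_ii_2_4}: telescope the sum over $n_{i,1}$, evaluate the two boundary terms via the two partial-fraction identities you describe, reindex $n_{i-1,k_{i-1}}+1\mapsto n_{i,1}$ to create the shifts $\big|_{x_i+N^{-1}}$, and split off the diagonal terms to obtain the $N^{-1}$-corrections in cases \eqref{it:discrete_dif_2b} and \eqref{it:discrete_dif_2c}. The only slight misstatement is your claim that diagonal corrections arise and cancel in cases \eqref{it:discrete_dif_2a} and \eqref{it:discrete_dif_2d}; in fact none arise there at all (in \eqref{it:discrete_dif_2a} the reindexed range already coincides with that of $\ii^{(N)}_{\bk_{i-1}^{\downarrow}}(\bx)\big|_{x_i+N^{-1}}$, and in \eqref{it:discrete_dif_2d} the upper boundary term $G(n_{i+1,1}-1)$ is already the shifted factor, so no reindexing is needed), but this does not affect the validity of the argument.
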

Difference equations obtained by replacing each $\ii_{\bk}^{(N)}(\bx)$ with $(-1)^{\dep(\bk)}\widetilde{\Li}_{\bk}^{\sh,(N)}(\bx)$ also hold (\cref{thm:diff_Li_1,thm:diff_Li_2}).
By independently proving the difference equations with respect to $x_1$ for each $\ii_{\bk}^{(N)}(\bx)$ and $\widetilde{\Li}_{\bk}^{\sh,(N)}(\bx)$, an alternative proof of \cref{thm:main} can be obtained.

This paper is organized as follows: In \cref{sec:connected_sums}, we provide a proof of \cref{thm:main} using the method of connected sums and prove that the limit of $\widetilde{\Li}_{\bk}^{\sh,(N)}(\bz)$ as $N\to\infty$ coincides with $\Li_{\bk}^{\sh}(\bz)$.
In \cref{sec:diff}, we prove \cref{thm:diff} and offer an alternative proof of the main theorem.
After the main theorem is proved, we explore several applications.
In \cref{sec:duality}, we present an alternative proof of the duality relations for multiple polylogarithms.
In \cref{sec:FMZV}, we investigate families of relations for finite multiple zeta values derived from our main result.
In \cref{sec:EDSR}, we give an alternative proof of the extended double shuffle relations for multiple polylogarithms.
In \cref{sec:Misc}, we exhibit a few equations obtained from our main result.
% --------------------------------------------------------------------------
\section{Proof of \cref{thm:main} by using the method of connected sums}\label{sec:connected_sums}
% --------------------------------------------------------------------------
In this section, we present a proof of \cref{thm:main} by using the method of connected sums as performed by Maesaka, Watanabe, and the third author~\cite{MaesakaSekiWatanabe-pre}.
See \cite{Seki2020} for the terms \emph{connector}, \emph{connected sum}, and \emph{transport relation}.

We aim to prove the following theorem, which is not \cref{thm:main} itself but a slightly modified version.
\begin{theorem}\label{thm:modified_main}
Let $\bk=(k_1,\dots, k_r)$ be an index and $\bx=(x_1,\dots, x_r)$ a tuple of indeterminates.
Then, for any positive integer $N$, we have
\begin{equation}\label{eq:modified_main}
\begin{split}
&\sum_{0<n_1<\cdots<n_r\leq N}\frac{1}{n_1^{k_1}\cdots n_r^{k_r}}\left[\prod_{i=1}^{r-1}\frac{\binom{Nx_{i+1}-1}{n_i}}{\binom{Nx_i-1}{n_i}}\right]\frac{\binom{N}{n_r}}{\binom{Nx_r-1}{n_r}}\\
&=(-1)^r\sum_{\substack{1\leq n_{j,1}\leq\cdots\leq n_{j,k_j}\leq N \ (1\leq j\leq r)\\ n_{j,k_j}<n_{j+1,1} \ (1\leq j<r)}}\prod_{j=1}^r\frac{1}{(n_{j,1}-Nx_j)n_{j,2}\cdots n_{j,k_j}}.
\end{split}
\end{equation}
\end{theorem}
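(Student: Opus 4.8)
The plan is to follow the connected-sum strategy of Maesaka, Watanabe and the third author, adapting both the connector and the transport relations so that they remain valid in the presence of the indeterminates $x_1,\dots,x_r$. Reading the index $\bk$ as a word whose $j$-th block consists of one \emph{depth letter} (the one producing the boundary factor $\tfrac{1}{n_{j,1}-Nx_j}$) followed by $k_j-1$ \emph{weight letters} (each producing a factor $\tfrac{1}{n_{j,i}}$), I would introduce a family of connected sums interpolating between the two sides of \eqref{eq:modified_main}. For each way of cutting the word into an already-processed integral part (indices $n_{j,i}$) and an unprocessed series part (indices $n_{p+1},\dots,n_r$), I would define a sum of the schematic form
\[
Z = \sum (\text{integral factors already produced})\cdot C(m,n)\cdot(\text{remaining series factors}),
\]
where $m$ is the largest integral index produced so far, $n$ is the current (smallest) series summation variable, and the connector $C(m,n)$ is built from the generalized binomial coefficients $\binom{Nx-1}{\,\cdot\,}$ together with a ratio of binomial coefficients in $m$ and $n$ generalizing the connector of \cite{MaesakaSekiWatanabe-pre}.

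I would then check the two endpoints of the interpolation. The fully-unprocessed sum should equal the left-hand side of \eqref{eq:modified_main}: here the top factor $\binom{N}{n_r}$ and the range $n_r\le N$ must emerge from the connector attached to the virtual block with $x_{r+1}=1$, which is precisely what distinguishes the \emph{modified} statement from \cref{thm:main}. The fully-processed sum should equal the right-hand side, including the global sign $(-1)^r$. The heart of the argument is two transport relations, one per letter type, each proved by summing a single variable against the connector. The \emph{weight transport} is a telescoping (Abel-summation) identity replacing a series factor $\tfrac{1}{n^{a}}$ by $\tfrac{1}{n^{a-1}}$ and emitting a new integral variable $\tfrac{1}{n'}$ with $n'\ge m$, while updating $C$. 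The \emph{depth transport} is a partial-fraction identity that consumes the denominator binomial $\binom{Nx_j-1}{\,\cdot\,}$ on the series side, emits the boundary factor $\tfrac{1}{n_{j,1}-Nx_j}$, and simultaneously produces the sign $-1$ that accumulates to $(-1)^r$. Applying these $\wt(\bk)$ times, in the order dictated by the word of $\bk$ and starting from the smallest-index end, transports the sum from one endpoint to the other and yields \eqref{eq:modified_main}.

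I expect the main obstacle to be twofold. First, one must pin down the correct connector: unlike the all-ones case treated in \cite{MaesakaSekiWatanabe-pre}, every step must now hold as an identity of \emph{rational functions} in the $x_j$, so $C$ has to be chosen so that both transport relations become rational-function identities rather than relations among integers. Second, and more delicate, is the depth transport itself: the interaction between the generalized binomial $\binom{Nx_j-1}{\,\cdot\,}$ on the series side and the rational factor $\tfrac{1}{\,\cdot\,-Nx_j}$ on the integral side must be matched by a partial-fraction decomposition in which $Nx_j$ plays the role of a pole, and one has to verify that the boundary terms of that decomposition are exactly absorbed by the uniform range convention ($\le N$ everywhere) of \eqref{eq:modified_main}. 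Checking these identities uniformly in $j$, including the degenerate case $j=1$ (no integral block to the left) and the virtual top block at $x_{r+1}=1$, will be the technical core; once they are in place, the interpolation and the endpoint evaluations follow routinely.
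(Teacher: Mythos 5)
Your overall architecture is the same as the paper's: a connected sum interpolating between the two sides of \eqref{eq:modified_main}, with one telescoping step per unit of weight and one partial-fraction step per unit of depth. But as written the proposal leaves unspecified exactly the ingredients that carry the mathematical content. The connector is never pinned down, and it is not an arbitrary choice: the paper takes $C_N^{(x)}(n,m)\coloneqq\binom{m}{n}\big/\binom{Nx-1}{n}$, for which your two transport steps become the one-line rational-function identities
\begin{gather*}
\frac{1}{n}\cdot\bigl(C_N^{(x)}(n,b)-C_N^{(x)}(n,b-1)\bigr)=C_N^{(x)}(n,b)\cdot\frac{1}{b},\\
C_N^{(x)}(a,m)\cdot\frac{1}{m}=\bigl(C_N^{(x)}(a-1,m-1)-C_N^{(x)}(a,m-1)\bigr)\cdot\frac{1}{Nx-m},
\end{gather*}
and, crucially, there is a third identity that your plan omits entirely:
\[
\frac{\binom{Nx'-1}{n}}{\binom{Nx-1}{n}}\cdot C_N^{(x')}(n,m-1)=C_N^{(x)}(n,m-1).
\]
This last relation relabels the connector's parameter from $x_j$ to $x_{j-1}$ each time a depth is crossed, and it is exactly what consumes the ratios $\binom{Nx_{i+1}-1}{n_i}\big/\binom{Nx_i-1}{n_i}$ on the left-hand side of \eqref{eq:modified_main}. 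Without it the $x$-dependence of the connector cannot be propagated past a single block, so the interpolation does not close; the paper itself singles out this relabeling step as the genuinely new point relative to \cite{MaesakaSekiWatanabe-pre}.

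There is also an orientation problem. You process the blocks starting from $n_1$, placing the integral part below the series part, yet you also (correctly) observe that the factor $\binom{N}{n_r}\big/\binom{Nx_r-1}{n_r}$ must be the connector attached to the virtual block at $x_{r+1}=1$. These two statements are incompatible: in your scheme that factor sits at the end processed last and would remain a passive spectator until the final step. The paper transports from the other end: the untouched sum is the connected sum $Z_N^{(\bx)}(k_1,\dots,k_r\mid\,)$ whose connector $C_N^{(x_r)}(n_r,N)=\binom{N}{n_r}\big/\binom{Nx_r-1}{n_r}$ is already present in the left-hand side, each application of the transport lemma moves the \emph{last} remaining component $k_j$ across, the connector always joins the largest series variable to the smallest integral variable, and it degenerates to $1$ (namely $C_N^{(x_1)}(0,m_{1,1}-1)$) once everything has been transported. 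A reversed scheme would need different identities, telescoping in the lower rather than the upper argument of the binomial coefficient, and you have not produced them. With the orientation corrected and the three identities above in hand, the rest of your plan --- the endpoint identifications and the accumulation of the sign $(-1)^r$ from the factors $1/(Nx_j-m_{j,1})$ --- goes through exactly as you describe.
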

% --------------------------------------------------------------------------
\subsection{Definition of the connector and the connected sum}
% --------------------------------------------------------------------------
Fix a positive integer $N$.
For an indeterminate $x$ and non-negative integers $n$ and $m$, we define a \emph{connector} $C_N^{(x)}(n,m)$ as
\[
C_N^{(x)}(n,m)\coloneqq\frac{\binom{m}{n}}{\binom{Nx-1}{n}}.
\]
For indices $\bk = (k_1, \dots, k_r)$ and $\bl = (l_1, \dots, l_s)$, let $S_N(\bk;\bl)$ be given by:
\begin{align*}
\left\{ (\bn, \bm_1, \ldots, \bm_s) \in \ZZ^r \times \ZZ^{l_1} \times \cdots \times \ZZ^{l_s} \ \middle|\ \begin{array}{l} 0 < n_1 < \cdots < n_r < m_{1,1},\\
m_{i,j} \leq N \quad (1 \le i \le s, 1 \le j \le l_i),\\
m_{i,j} \le m_{i,j+1} \quad (1 \le i \le s, 1 \le j < l_i),\\
m_{i, l_i} < m_{i+1, 1} \quad (1 \le i < s)
\end{array}\right\},
\end{align*}
where $\bn=(n_1,\dots, n_r)$ and $\bm_i=(m_{i,1},\dots, m_{i,l_i})$ ($1\leq i\leq s$).
Then, for a tuple of indeterminates $\bx=(x_1,\dots,x_{r+s})$, we define a \emph{connected sum} $Z_N^{(\bx)}(\bk \mid \bl)$ as
\[
Z_N^{(\bx)}(\bk \mid \bl) \coloneqq \sum_{(\bn, \bm_1, \ldots, \bm_s) \in S_N(\bk;\bl)} Q_{\bk}^{(x_1,\dots, x_r)}(\bn) \cdot C_N^{(x_r)}(n_r, m_{1,1}-1) \cdot \prod_{j=1}^s P_{N,l_j}^{(x_{r+j})}(\bm_j),
\]
where $Q_{\bk}^{(x_1,\dots, x_r)}(\bn)$ and $P_{N,l}^{(x)}(\bm)$ are defined by
\[
Q_{\bk}^{(x_1,\dots,x_r)}(\bn) \coloneqq \frac{1}{n_1^{k_1} \cdots n_r^{k_r}} \left[\prod_{i=1}^{r-1}\frac{\binom{Nx_{i+1}-1}{n_i}}{\binom{Nx_i-1}{n_i}}\right], \qquad P_{N,l}^{(x)}(\bm) \coloneqq \frac{1}{(Nx - m_1) m_2 \cdots m_l},
\]
respectively. Furthermore, we set
\[
Z_N^{(x_1,\dots, x_r)}(\bk \mid )\coloneqq\text{ L.H.S. of \eqref{eq:modified_main}},\quad Z_N^{(x_1,\dots,x_r)}( \mid \bk) \coloneqq\text{ R.H.S. of \eqref{eq:modified_main}}.
\]
% --------------------------------------------------------------------------
\subsection{Transport relations}
% --------------------------------------------------------------------------
Fix a positive integer $N$ and indeterminates $x$ and $x'$.
\begin{lemma}\label{lem:Trans}
For non-negative integers $n$ and $m$, we have
\begin{align}\label{eq:Trans-1}
\frac{1}{n} \cdot C_N^{(x)}(n,m) &= \sum_{n \le b \le m} C_N^{(x)}(n,b)\cdot\frac{1}{b} \quad (0 < n \le m),\\ \label{eq:Trans-2}
\sum_{n < a \le m} C_N^{(x)}(a,m)\cdot \frac{1}{m} &= C_N^{(x)}(n,m-1)\cdot\frac{1}{Nx-m} \quad (n < m),\\ \label{eq:Trans-3}
\frac{\binom{Nx'-1}{n}}{\binom{Nx-1}{n}}\cdot C_N^{(x')}(n,m-1) &= C_N^{(x)}(n,m-1) \quad (n < m). 
\end{align}
\end{lemma}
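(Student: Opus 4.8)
The plan is to prove the three identities \eqref{eq:Trans-1}, \eqref{eq:Trans-2}, and \eqref{eq:Trans-3} as identities of rational functions in the indeterminate $x$. Writing $y\coloneqq Nx$ and recalling $C_N^{(x)}(n,m)=\binom{m}{n}/\binom{y-1}{n}$, each relation becomes a rational-function identity in $y$ that I verify formally. The quickest is \eqref{eq:Trans-3}: substituting the definitions, the numerator factor $\binom{Nx'-1}{n}$ cancels against the denominator of $C_N^{(x')}(n,m-1)$, leaving $\binom{m-1}{n}/\binom{Nx-1}{n}=C_N^{(x)}(n,m-1)$ at once, with no dependence on $x'$ surviving.

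For \eqref{eq:Trans-1}, the denominator $\binom{y-1}{n}$ is common to both sides and independent of the summation variable $b$, so it suffices to prove the combinatorial identity $\tfrac1n\binom{m}{n}=\sum_{b=n}^{m}\tfrac1b\binom{b}{n}$. I would rewrite each summand via $\tfrac1b\binom{b}{n}=\tfrac1n\binom{b-1}{n-1}$ and then apply the hockey-stick identity $\sum_{b=n}^{m}\binom{b-1}{n-1}=\binom{m}{n}$, which gives the claim directly.

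The crux is \eqref{eq:Trans-2}, where the denominator $\binom{y-1}{a}$ now depends on the summation variable $a$, so no common factor can be extracted. My plan is to exhibit a telescoping decomposition of the summand; precisely, I claim
\[
\frac{\binom{m}{a}}{\binom{y-1}{a}}=\frac{m}{y-m}\left(\frac{\binom{m-1}{a-1}}{\binom{y-1}{a-1}}-\frac{\binom{m-1}{a}}{\binom{y-1}{a}}\right).
\]
To verify it I would clear $1/\binom{y-1}{a}$ using $\binom{y-1}{a}=\tfrac{y-a}{a}\binom{y-1}{a-1}$ and $\binom{m-1}{a}=\tfrac{m-a}{a}\binom{m-1}{a-1}$, after which the bracket collapses to $\binom{m-1}{a-1}\cdot\tfrac{y-m}{a}$, the prefactor $\tfrac{m}{y-m}$ cancels the $(y-m)$, and one recovers $\binom{m}{a}/\binom{y-1}{a}$ through $\binom{m}{a}=\tfrac{m}{a}\binom{m-1}{a-1}$. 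Summing over $n<a\le m$ then telescopes to $\tfrac{m}{y-m}\binom{m-1}{n}/\binom{y-1}{n}$, the upper boundary term vanishing because $\binom{m-1}{m}=0$; dividing by $m$ and recalling $y=Nx$ yields exactly $C_N^{(x)}(n,m-1)/(Nx-m)$, the right-hand side of \eqref{eq:Trans-2}.

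The main obstacle is locating the telescoping form for \eqref{eq:Trans-2}; once it is guessed, its verification is routine. Beyond that, the only care needed is at the boundaries: matching the endpoints of the hockey-stick range in \eqref{eq:Trans-1}, and checking the extremal case $n=m-1$ of \eqref{eq:Trans-2} (a single-term sum, where $\binom{y-1}{m}=\tfrac{y-m}{m}\binom{y-1}{m-1}$ reproduces the factor $\tfrac{m}{y-m}$) to confirm that the summation ranges line up correctly.
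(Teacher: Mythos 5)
Your proposal is correct and essentially matches the paper's proof: your telescoping identity for \eqref{eq:Trans-2} is exactly the paper's term-wise identity $C_N^{(x)}(a,m)\cdot\frac{1}{m}=\bigl(C_N^{(x)}(a-1,m-1)-C_N^{(x)}(a,m-1)\bigr)\cdot\frac{1}{Nx-m}$ multiplied by $m$, your hockey-stick argument for \eqref{eq:Trans-1} is the same telescoping the paper uses (via $\binom{b}{n}-\binom{b-1}{n}=\binom{b-1}{n-1}$) after factoring out the common denominator $\binom{Nx-1}{n}$, and \eqref{eq:Trans-3} is the same direct cancellation. All boundary checks you flag are handled correctly.
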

\begin{proof}
Since we can easily see that
\[
\frac{1}{n}\cdot\bigl(C_N^{(x)}(n,b) - C_N^{(x)}(n,b-1)\bigr) = C_N^{(x)}(n,b)\cdot\frac{1}{b}
\]
for $0<n<b\leq m$, and
\[
C_N^{(x)}(a,m)\cdot\frac{1}{m} = \bigl(C_N^{(x)}(a-1,m-1) - C_N^{(x)}(a,m-1)\bigr)\cdot\frac{1}{Nx-m}
\]
for $n<a\leq m$, we obtain the first two formulas. The last equation immediately follows by definition.
\end{proof}
By using \cref{lem:Trans}, we show the following transport relations.
\begin{lemma}\label{lem:Trans-rel}
Let $k$ be a positive integer, $\bk$ and $\bl$ indices.
Let $\bx$ be a tuple of indeterminates of appropriate length.
Then we have
\begin{align*}
Z_N^{(\bx)}(\bk, k \mid \bl) &= Z_N^{(\bx)}(\bk \mid k, \bl),\\
Z_N^{(\bx)}(\bk, k \mid ) &= Z_N^{(\bx)}(\bk \mid k),\\
Z_N^{(\bx)}(k \mid \bl) &= Z_N^{(\bx)}( \mid k, \bl).
\end{align*}
\end{lemma}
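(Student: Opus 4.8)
The plan is to deduce all three relations from one weight-by-weight computation that transports the top entry $k$ across the bar; the first relation is the generic case and the other two are its boundary specializations. Starting from $Z_N^{(\bx)}(\bk, k \mid \bl)$, I would isolate the factors depending on the topmost series variable $n_{r+1}$. Since
\[
Q_{(\bk,k)}^{(x_1,\dots,x_{r+1})}(n_1,\dots,n_{r+1}) = Q_{\bk}^{(x_1,\dots,x_r)}(\bn)\cdot\frac{1}{n_{r+1}^{k}}\cdot\frac{\binom{Nx_{r+1}-1}{n_r}}{\binom{Nx_r-1}{n_r}},
\]
and neither the binomial ratio nor the spectator blocks $\prod_{j=1}^{s}P_{N,l_j}^{(x_{r+1+j})}(\bm_j)$ involve $n_{r+1}$, the only $n_{r+1}$-dependent part of the summand is $\sum_{n_r<n_{r+1}<m_{1,1}}n_{r+1}^{-k}\,C_N^{(x_{r+1})}(n_{r+1},m_{1,1}-1)$.

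The key step is the kernel identity, valid for $0\le n<M$ and any indeterminate $y$:
\[
\sum_{n<a<M}\frac{1}{a^{k}}\,C_N^{(y)}(a,M-1)=\sum_{n<m_1\le\cdots\le m_k<M}C_N^{(y)}(n,m_1-1)\,\frac{1}{(Ny-m_1)\,m_2\cdots m_k},
\]
which I would prove by induction on $k$. For $k=1$, apply \eqref{eq:Trans-1} to $a^{-1}C_N^{(y)}(a,M-1)$, interchange the two sums, and collapse the inner sum by \eqref{eq:Trans-2}. For the inductive step, split off one factor $a^{-1}$ by \eqref{eq:Trans-1}, swap the order of summation so that the new connector bound $b$ becomes the outer variable, and recognize the inner sum as the weight-$(k-1)$ kernel with upper bound $b+1$ (using $a\le b\iff a<b+1$); the inductive hypothesis then produces the nested block, and renaming $b=m_k$ yields the claimed shape.

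With the kernel identity applied (taking $y=x_{r+1}$, $n=n_r$, $M=m_{1,1}$), I multiply by the binomial ratio and invoke \eqref{eq:Trans-3} to absorb it, converting $C_N^{(x_{r+1})}(n_r,m_1-1)$ into $C_N^{(x_r)}(n_r,m_1-1)$. Reinstating $Q_{\bk}(\bn)$ and the spectator blocks and summing over the remaining variables reproduces exactly $Z_N^{(\bx)}(\bk\mid k,\bl)$, in which the transported entry has become the block $P_{N,k}^{(x_{r+1})}$; this is the first relation. The identity $Z_N^{(\bx)}(\bk,k\mid)=Z_N^{(\bx)}(\bk\mid k)$ is the empty-$\bl$ case: the terminal factor $\binom{N}{n_{r+1}}/\binom{Nx_{r+1}-1}{n_{r+1}}$ equals $C_N^{(x_{r+1})}(n_{r+1},N)$, so the same argument runs with $m_{1,1}$ replaced by $N+1$. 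The identity $Z_N^{(\bx)}(k\mid\bl)=Z_N^{(\bx)}(\mid k,\bl)$ is the empty-$\bk$ case: here $n_r$ becomes $0$, the connector $C_N^{(x_1)}(0,\cdot)$ equals $1$, and no use of \eqref{eq:Trans-3} is needed.

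I expect the main obstacle to be the bookkeeping in the inductive step of the kernel identity---specifically justifying the interchange of summation order and the reindexing $a\le b\iff a<b+1$ that identifies the inner sum with the weight-$(k-1)$ kernel---together with verifying that the degenerate ranges ($n=0$ and $M=N+1$) match the definitions of $Z_N^{(\bx)}(\bk\mid)$ and $Z_N^{(\bx)}(\mid\bk)$ exactly.
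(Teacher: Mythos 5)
Your proof is correct and follows essentially the same route as the paper: the paper's one-line proof says precisely to apply \eqref{eq:Trans-1} $k$ times and then \eqref{eq:Trans-2} and \eqref{eq:Trans-3} once each, which is exactly what your kernel identity (whose induction unrolls to $k$ applications of \eqref{eq:Trans-1} followed by one application of \eqref{eq:Trans-2}) accomplishes together with the final \eqref{eq:Trans-3} step. The only difference is presentational: you make the intermediate identity and the boundary specializations $n=0$ and $M=N+1$ explicit, which the paper leaves to the reader.
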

\begin{proof}
By repeatedly applying \eqref{eq:Trans-1} $k$ times, and subsequently using \eqref{eq:Trans-2} and \eqref{eq:Trans-3} once each, the conclusion can be obtained from the definitions of connected sums.
\end{proof}
% --------------------------------------------------------------------------
\subsection{Proof of \cref{thm:modified_main} and \cref{thm:main}}
% --------------------------------------------------------------------------
\begin{proof}[Proof of \cref{thm:modified_main}]
Write $\bk = (k_1, \dots, k_r)$.
By repeatedly using \cref{lem:Trans-rel} $r$ times, we have
\begin{align*}
\text{ L.H.S. of \eqref{eq:modified_main}} &= Z_N^{(\bx)}(k_1, \ldots, k_r \mid )\\
&= Z_N^{(\bx)}(k_1, \ldots, k_{r-1} \mid k_r)\\
&= \cdots\\
&= Z_N^{(\bx)}(\mid k_1, \dots, k_r) = \text{ R.H.S. of \eqref{eq:modified_main}}.
\end{align*}
This completes the proof.
\end{proof}
\begin{proof}[Proof of \cref{thm:main}]
It can be deduced from \cref{thm:modified_main} by replacing $x_j$ with $x_j(1+1/N)$ and then changing $N$ to $N-1$.
\end{proof}
The above proof is a straightforward generalization of the proof of \cref{thm:MSW} in \cite{MaesakaSekiWatanabe-pre}, but the process of changing the variables in the connector using \eqref{eq:Trans-3} is relatively new and interesting.
% --------------------------------------------------------------------------
\subsection{Behavior in the limit as $N\to\infty$}\label{subsec:CPWinS2}
% --------------------------------------------------------------------------
In this subsection, we check that
\[
\lim_{N\to\infty}\widetilde{\Li}_{\bk}^{\sh,(N)}(\bz)=\Li_{\bk}^{\sh}(\bz)
\]
holds.
\begin{lemma}\label{lem:Sum-Binom}
For distinct indeterminates $x$, $x'$ and integers $r$, $n$ with $0\leq r < n$, we have
\[
\sum_{r < i \le n} \frac{\binom{x'+1}{i}}{\binom{x}{i}} = \frac{x'+1}{x-x'} \left(\frac{\binom{x'}{r}}{\binom{x}{r}} - \frac{\binom{x'}{n}}{\binom{x}{n}}\right).
\]
In particular, for a fixed $z \in \CC$ with $|z| \ge 1$ and $z\neq 1$, 
\[
S_{N,n}^{(m, z)} \coloneqq \sum_{0 < i \le n} \frac{\binom{N-m}{i}}{\binom{Nz-m}{i}} = \frac{N-m}{Nz-N+1} \left(1 - \frac{\binom{N-m-1}{n}}{\binom{Nz-m}{n}}\right)
\]
is bounded for any integers $N, n, m$ satisfying $0 < m < N$ and $0 < n < N-m$.
\end{lemma}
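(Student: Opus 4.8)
The plan is to prove the first (polynomial) identity by telescoping, then specialize it to obtain the closed form for $S_{N,n}^{(m,z)}$, and finally bound the two resulting factors separately; the last step is where the real work lies. First I would establish the term-wise identity
\[
\frac{\binom{x'+1}{i}}{\binom{x}{i}} = \frac{x'+1}{x-x'}\left(\frac{\binom{x'}{i-1}}{\binom{x}{i-1}} - \frac{\binom{x'}{i}}{\binom{x}{i}}\right)
\]
for each $i>0$. This follows from the elementary factorial relations $(x'+1)\binom{x'}{i-1}=i\binom{x'+1}{i}$ and $(x-i+1)\binom{x}{i-1}=i\binom{x}{i}$ together with the recursion $\binom{x'}{i}=\frac{x'-i+1}{i}\binom{x'}{i-1}$, which gives $\frac{\binom{x'}{i-1}}{\binom{x}{i-1}} - \frac{\binom{x'}{i}}{\binom{x}{i}} = \frac{x-x'}{x-i+1}\,\frac{\binom{x'}{i-1}}{\binom{x}{i-1}}$. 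Summing over $r < i \le n$, the right-hand side telescopes to $\frac{x'+1}{x-x'}\bigl(\frac{\binom{x'}{r}}{\binom{x}{r}} - \frac{\binom{x'}{n}}{\binom{x}{n}}\bigr)$, which is the first formula. All manipulations are identities of rational functions in $x,x'$, so no convergence issue arises.

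For the closed form of $S_{N,n}^{(m,z)}$ I would substitute $x=Nz-m$, $x'=N-m-1$ (so $x'+1=N-m$), and $r=0$ into the first formula. Since $\binom{x'}{0}/\binom{x}{0}=1$ and $\frac{x'+1}{x-x'}=\frac{N-m}{Nz-N+1}$, and since $x-x'=Nz-N+1\neq 0$ for $|z|\ge 1$, $z\neq 1$ (because $z=(N-1)/N$ would force $|z|<1$), the formula yields exactly the asserted expression.

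The heart of the argument, and the step I expect to be the main obstacle, is the boundedness claim, which I would reduce to bounding the two factors independently. The prefactor satisfies $\left|\frac{N-m}{Nz-N+1}\right| \le \frac{N}{|N(z-1)+1|} = \frac{1}{|(z-1)+N^{-1}|}$; as just noted the denominator never vanishes, and this quantity converges to $1/|z-1|$ as $N\to\infty$, hence is bounded in $N$ by some $M(z)<\infty$. For the binomial ratio I would prove the key inequality $|Nz-t| \ge N-t$ for every real $t$ with $0\le t\le N$: writing $g(t)=|Nz-t|^2-(N-t)^2 = N^2(|z|^2-1) - 2tN(\operatorname{Re}z-1)$, this is \emph{linear} in $t$ with $g(0)=N^2(|z|^2-1)\ge 0$ and $g(N)=N^2|z-1|^2\ge 0$, so $g\ge 0$ on $[0,N]$. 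Applying this with $t=m+j$ for $0\le j\le n-1$ (all satisfying $m+j\le N-2<N$) to each factor of $\frac{\binom{N-m-1}{n}}{\binom{Nz-m}{n}} = \prod_{j=0}^{n-1}\frac{N-m-1-j}{Nz-m-j}$ shows this ratio has modulus at most $\prod_{j=0}^{n-1}\frac{N-m-1-j}{N-m-j}<1$. Therefore $\bigl|1-\frac{\binom{N-m-1}{n}}{\binom{Nz-m}{n}}\bigr|\le 2$ and $|S_{N,n}^{(m,z)}|\le 2M(z)$, uniformly in $N,n,m$, which completes the proof.
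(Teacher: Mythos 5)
Your proposal is correct and follows essentially the same route as the paper: the same telescoping identity $\frac{\binom{x'+1}{i}}{\binom{x}{i}} = \frac{x'+1}{x-x'}\bigl(\frac{\binom{x'}{i-1}}{\binom{x}{i-1}} - \frac{\binom{x'}{i}}{\binom{x}{i}}\bigr)$ for the first claim, the same specialization for $S_{N,n}^{(m,z)}$, and the same strategy of bounding the binomial ratio by $\prod_j \frac{N-m-j}{N-m-j+1} < 1$ in modulus while bounding the prefactor using $z \neq 1$. The only cosmetic difference is that you justify $|Nz-t|\ge N-t$ via a linear-in-$t$ argument where the reverse triangle inequality $|Nz-t|\ge N|z|-t\ge N-t$ suffices.
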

\begin{proof}
The first claim immediately follows from
\[
\frac{\binom{x'+1}{i}}{\binom{x}{i}} = \frac{x'+1}{x-x'} \left(\frac{\binom{x'}{i-1}}{\binom{x}{i-1}} - \frac{\binom{x'}{i}}{\binom{x}{i}}\right).
\]
For the second claim, we observe that
\[
\left|\frac{\binom{N-m-1}{n}}{\binom{Nz-m}{n}} \right| = \prod_{j=1}^{n} \frac{N-m-j}{|Nz-m-j+1|} \le \prod_{j=1}^{n} \frac{N-m-j}{N-m-j+1} < 1
\]
by our assumption that $|z| \ge 1$. Since $(N-m)/(Nz-N+1)$ is bounded because $z\neq 1$, we obtain the desired result.
\end{proof}
\begin{proposition}\label{prop:asymptotic_for_modifiedMPL}
For an index $\bk=(k_1,\dots,k_r)$ and a tuple of indeterminates $\bz = (z_1, \dots, z_r)$ satisfying $|z_i| \ge 1$, where $(k_r,z_r)=(1,1)$ is also allowed, we have
\[
\widetilde{\Li}^{\sh,(N)}_{\bk}(\bz) = \Li^{\sh,< N}_{\bk}(\bz) + O(N^{-1/3}\log^rN)
\]
as $N \to \infty$.
The implied constant in Landau's notation depends only on $r$.
\end{proposition}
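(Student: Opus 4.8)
The plan is to estimate the difference $D:=\widetilde{\Li}_{\bk}^{\sh,(N)}(\bz)-\Li_{\bk}^{\sh,<N}(\bz)$ directly. Both terms are sums over the same simplex $0<n_1<\cdots<n_r<N$ with the common weight $1/(n_1^{k_1}\cdots n_r^{k_r})$; only the two products
\[
P(\bn):=\prod_{i=1}^{r}\frac{\binom{Nz_{i+1}-1}{n_i}}{\binom{Nz_i-1}{n_i}},\qquad p(\bn):=\prod_{i=1}^{r}\Big(\frac{z_{i+1}}{z_i}\Big)^{n_i}
\]
differ, so $D=\sum_{\bn}(P(\bn)-p(\bn))/(n_1^{k_1}\cdots n_r^{k_r})$. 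Since each $k_j\ge1$, the weight is dominated by $1/(n_1\cdots n_r)$, whose sum over the simplex is $O(\log^{r}N)$; this already yields the factor $\log^{r}N$ and shows the eventual constant is independent of the index beyond its depth $r$. I also record the bound $|p(\bn)|\le1$: telescoping the exponents gives $p(\bn)=z_1^{-n_1}\prod_{j=2}^{r}z_j^{\,n_{j-1}-n_j}$, all of whose exponents are negative while $|z_j|\ge1$.

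First I would dispose of the central region $n_r\le N^{1/3}$ (so all $n_i\le N^{1/3}$). Writing $\frac{Nz_{i+1}-1-j}{Nz_i-1-j}=\frac{z_{i+1}}{z_i}\cdot\frac{1-(1+j)/(Nz_{i+1})}{1-(1+j)/(Nz_i)}$ and multiplying over $0\le j<n_i$ gives $\binom{Nz_{i+1}-1}{n_i}/\binom{Nz_i-1}{n_i}=(z_{i+1}/z_i)^{n_i}(1+\varepsilon_i)$ with $|\varepsilon_i|=O(n_i^{2}/N)=O(N^{-1/3})$; because $|z_i|,|z_{i+1}|\ge1$ all factors stay bounded away from $0$ and $\infty$, so this estimate is uniform in $\bz$. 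Multiplying the $r$ factors and using $|p(\bn)|\le1$ yields $|P(\bn)-p(\bn)|=O(N^{-1/3})$ with a constant depending only on $r$, and summing against the harmonic weight bounds the central region by $O(N^{-1/3}\log^{r}N)$.

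The substance of the argument is the tail $n_r>N^{1/3}$, where the individual binomial ratios are no longer bounded and a termwise estimate is hopeless. Here I would sum the innermost variable $n_r$ by parts; this is exactly what \cref{lem:Sum-Binom} is built for, as it shows the partial sums $\sum_{n_r\le n}\binom{N-1}{n_r}/\binom{Nz_r-1}{n_r}$ are bounded (for fixed $z_r\neq1$), while the analogous partial sums $\sum_{n_r\le n}(z_{r+1}/z_r)^{n_r}$ are bounded by Dirichlet's estimate. The key manoeuvre is to absorb the $n_{r-1}$-coupling factor (namely $z_r^{\,n_{r-1}}$ on the $p$-side and $\binom{Nz_r-1}{n_{r-1}}$ on the $P$-side) into these partial sums, so that what remains of the outer product is bounded by $1$ uniformly in $n_1,\dots,n_{r-1}$; summation by parts against $1/n_r^{k_r}$ (with $k_r\ge1$ and lower cutoff $N^{1/3}$) then converts the bounded partial sums into a tail of size $O(N^{-1/3})$ for $\widetilde{\Li}_{\bk}^{\sh,(N)}(\bz)$ and $\Li_{\bk}^{\sh,<N}(\bz)$ separately, and the remaining sum over $n_1<\cdots<n_{r-1}$ supplies another $O(\log^{r-1}N)$. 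The degenerate case $z_r=1$ (which includes the admitted $(k_r,z_r)=(1,1)$) is handled without summation by parts: then $\binom{Nz_{r+1}-1}{n_r}/\binom{Nz_r-1}{n_r}=(z_{r+1}/z_r)^{n_r}=1$, so the innermost factors of $P$ and $p$ coincide and cancel in $P(\bn)-p(\bn)$, leaving a depth-$(r-1)$ instance of the proposition multiplied by $\sum_{n_r}1/n_r^{k_r}=O(\log N)$, which closes by induction on $r$.

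The main obstacle is precisely this tail when some $|z_i|=1$ with $z_i\neq1$: there is no geometric decay, the binomial ratios are individually unbounded, and the estimate rests entirely on the cancellation encoded in the bounded partial sums of \cref{lem:Sum-Binom} together with Dirichlet's bound. Carrying these partial-sum bounds through the iterated summation by parts over all $r$ variables, while correctly absorbing each coupling factor so that the residual outer products stay bounded and the constant remains independent of the index and of $N$, is where essentially all of the bookkeeping lies.
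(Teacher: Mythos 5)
Your treatment of the generic case $z_r\neq 1$ is essentially the paper's own argument: the same cutoff at $N^{1/3}$, the same termwise expansion of the binomial ratios in the central region, and the same combination of the bounded partial sums from \cref{lem:Sum-Binom} with Abel summation against $1/n_r^{k_r}$ in the tail, after regrouping the product so that the factor attached to the innermost variable is $\binom{N-n_{r-1}-1}{n_r-n_{r-1}}\big/\binom{Nz_r-n_{r-1}-1}{n_r-n_{r-1}}$ and the leftover outer product has modulus at most $1$. The structural difference is how trailing components with $z_i=1$ are disposed of: the paper takes $h$ maximal with $z_h\neq 1$, splits at $n_h<N^{1/3}$ versus $n_h\geq N^{1/3}$, and needs no induction, since the correction factors for $i>h$ are identically $1$ and the variables $n_{h+1},\dots,n_r$ simply ride along inside the outer harmonic sum.

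Your replacement for this---induction on $r$ when $z_r=1$---has a genuine gap. Summing out $n_r$ does not produce ``a depth-$(r-1)$ instance times $O(\log N)$''; what you actually get is
\[
\sum_{0<n_1<\cdots<n_{r-1}<N}\frac{P'(\bn')-p'(\bn')}{n_1^{k_1}\cdots n_{r-1}^{k_{r-1}}}\,w(n_{r-1}),
\qquad w(m)\coloneqq\sum_{m<n_r<N}\frac{1}{n_r^{k_r}},
\]
and the weight $w(n_{r-1})$ depends on $n_{r-1}$ (ranging from about $\log N$ down to about $0$ when $k_r=1$), so it cannot be pulled out as a constant. Nor can you pass to absolute values: the smallness of the depth-$(r-1)$ instance rests on cancellation, and $\sum_{\bn'}|P'-p'|/(n_1\cdots n_{r-1})$ is genuinely of order $\log^{r-1}N$, not $N^{-1/3}\log^{r-1}N$ (already for $r-1=1$, $z_1=-1$, $k_1=1$ one has $P'(n)-p'(n)=(-1)^n\bigl(\prod_{j=1}^{n}\tfrac{N-j}{N+j}-1\bigr)$, whose modulus is of order $1$ for $n\gg\sqrt{N}$). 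To close the induction you would need to strengthen the hypothesis to a bound on all truncated sums $\sum_{0<n_1<\cdots<n_{r-1}<M}$ uniformly in $M\leq N$, so that one further Abel summation against the decreasing weight $w$ applies---or else restructure the split around $n_h$ as the paper does. The rest of your outline is sound.
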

\begin{proof}
Take $0 \le h \le r$ such that $z_{h} \neq 1$ and $z_{h+1} = \cdots = z_r = (z_{r+1}=) 1$. In the case of $h = 0$, since $\widetilde{\Li}_{\bk}^{\sh, (N)} (\{1\}^r) = \Li_{\bk}^{\sh, <N}(\{1\}^r)$ holds, we assume $h > 0$ in the following. By a direct calculation, we obtain the expression
\[
\widetilde{\Li}_{\bk}^{\sh, (N)}(\bz) = \sum_{0 < n_1 < \cdots < n_r < N} \frac{1}{n_1^{k_1} \cdots n_r^{k_r}} \prod_{i=1}^r \left(\frac{z_{i+1}}{z_i}\right)^{n_i} \cdot \prod_{i=1}^{h} \frac{z_i^{n_i-n_{i-1}} \binom{N-n_{i-1}-1}{n_i-n_{i-1}}}{\binom{Nz_i-n_{i-1}-1}{n_i-n_{i-1}}},
\]
where $n_0 = 0$.
We decompose the sum into two parts based on the conditions $n_h<N^{1/3}$ or $N^{1/3}\leq n_h$.
Since for $n < n' < N^{1/3}$,
\begin{align*}
\frac{z^{n'-n} \binom{N-n-1}{n'-n}}{\binom{Nz-n-1}{n'-n}}
= \prod_{j=1}^{n'-n} \left(1+\frac{(1-z)(n+j)}{Nz-n-j} \right) = \prod_{j=1}^{n'-n} (1 + O(N^{-2/3}))
\end{align*}
holds, we have
\[
\prod_{i=1}^{h} \frac{z_i^{n_i-n_{i-1}} \binom{N-n_{i-1}-1}{n_i-n_{i-1}}}{\binom{Nz_i-n_{i-1}-1}{n_i-n_{i-1}}} = 1 + O_r(N^{-1/3})
\]
under the condition $n_h<N^{1/3}$.
Here, the subscript attached to $O$ indicates that the implied constant depends on that parameter.
Therefore, the first part equals
\begin{align*}
\sum_{\substack{0 < n_1 < \cdots < n_r < N\\ n_{h} < N^{1/3}}} \frac{1}{n_1^{k_1} \cdots n_r^{k_r}} \prod_{i=1}^r \left(\frac{z_{i+1}}{z_i}\right)^{n_i} (1 + O_r(N^{-1/3}))
\end{align*}
and the absolute value of the error term is bounded above by
\[
O_r\left(N^{-1/3}\sum_{0 < n_1, \ldots, n_r < N} \frac{1}{n_1 \cdots n_r}\right) = O_r(N^{-1/3} \log^r N)
\]
by our assumption $|z_i| \ge 1$.
Next, the second part is
\begin{align*}
&\sum_{0 < n_1 < \cdots < n_{h-1} < n_{h+1} < \cdots < n_r < N} \frac{1}{n_1^{k_1} \cdots n_{h-1}^{k_{h-1}} n_{h+1}^{k_{h+1}} \cdots n_r^{k_r}} \prod_{i=1}^{h-1} \frac{\binom{N-n_{i-1}-1}{n_i-n_{i-1}}}{\binom{Nz_i-n_{i-1}-1}{n_i-n_{i-1}}}\\
&\quad \times \sum_{\substack{n_{h-1}<n_h<n_{h+1} \\ N^{1/3}\leq n_h}} \frac{1}{n_{h}^{k_{h}}} \frac{\binom{N-n_{h-1}-1}{n_{h}-n_{h-1}}}{\binom{Nz_{h}-n_{h-1}-1}{n_{h}-n_{h-1}}}.
\end{align*}
(In the case of $h=r$, the description needs to be slightly modified, but the proof is entirely similar.)
Here, the inner sum is evaluated as
\begin{align*}
&\sum_{n'_{h-1} \le n_{h} < n_{h+1}} \frac{1}{n_{h}^{k_{h}}} \frac{\binom{N-n_{h-1}-1}{n_{h}-n_{h-1}}}{\binom{Nz_{h}-n_{h-1}-1}{n_{h}-n_{h-1}}}\\
&= \sum_{n'_{h-1} \le n_{h} < n_{h+1}} \frac{1}{n_{h}^{k_{h}}} (S_{N, n_h-n_{h-1}}^{(n_{h-1}+1, z_h)} - S_{N, n_h-n_{h-1}-1}^{(n_{h-1}+1, z_h)})\\
&= \sum_{n'_{h-1} \le n_{h} < n_{h+1}} \left(\frac{1}{n_h^{k_h}} - \frac{1}{(n_h+1)^{k_h}} \right) S_{N, n_h-n_{h-1}}^{(n_{h-1}+1, z_h)} + O_{z_h}(N^{-1/3})\\
&= O_{z_h}(N^{-1/3})
\end{align*}
by applying the boundedness of $S_{N,n}^{(m,z)}$ shown in \cref{lem:Sum-Binom}, where $n'_{h-1}\coloneqq \max\{n_{h-1}+1,N^{1/3}\}$ and $S_{N,0}^{(m,z)}\coloneqq 0$.
Since the remaining sum is $O(\log^{r-1}N)$ by $|z_i| \ge 1$, the second part is bounded above by $O(N^{-1/3}\log^r N)$.
In conclusion, we have
\begin{align*}
\widetilde{\Li}_{\bk}^{\sh, (N)}(\bz) = \sum_{\substack{0 < n_1 < \cdots < n_r < N\\ n_{h} < N^{1/3}}} \frac{1}{n_1^{k_1} \cdots n_r^{k_r}} \prod_{i=1}^r \left(\frac{z_{i+1}}{z_i}\right)^{n_i} + O_r(N^{-1/3} \log^r N).
\end{align*}
Finally, since
\[
\sum_{\substack{0 < n_1 < \cdots < n_r < N\\ N^{1/3} \le n_h}} \frac{1}{n_1^{k_1} \cdots n_r^{k_r}} \prod_{i=1}^r \left(\frac{z_{i+1}}{z_i}\right)^{n_i} = O(N^{-1/3} \log^r N)
\]
also holds by the method of Abel's summation, we obtain the desired result.
\end{proof}
From this proposition and the fact that $\ii_{\bk}^{(N)}(\bz)$ is a Riemann sum approximating $\ii_{\bk}(\bz)$, it can be considered that \cref{thm:main} indeed provides a discretization of \eqref{eq:IIEforMPL}.
% --------------------------------------------------------------------------
\section{Discretization of Goncharov's differential formula}\label{sec:diff}
% --------------------------------------------------------------------------
In this section, we fix a positive integer $N$, an index $\bk=(k_1,\dots, k_r)$, and a tuple of indeterminates $\bx=(x_1,\dots,x_r)$.
See \cref{sec:intro} for the definition of the difference quotient $\frac{\Delta^{(N)}}{\Delta^{(N)}x_i}$ and the abbreviated notation $\big|_{x_i+N^{-1}}$, $\bk^{\wedge}_{i}$, $\bk^{\downarrow}_i$, and $\bx^{\wedge}_i$.
% --------------------------------------------------------------------------
\subsection{Difference equations for $\widetilde{\Li}_{\bk}^{\sh,(N)}(\bx)$}
% --------------------------------------------------------------------------
\begin{theorem}\label{thm:diff_Li_1}
When $k_1>1$,
\[
\frac{\Delta^{(N)}\widetilde{\Li}_{\bk}^{\sh, (N)}(\bx)}{\Delta^{(N)}x_1}=-\frac{1}{x_1}\widetilde{\Li}_{\bk^{\downarrow}_1}^{\sh, (N)}(\bx);
\]
when $r=1$ and $k_1=1$, 
\[
\frac{\Delta^{(N)}\widetilde{\Li}_{\bk}^{\sh, (N)}(\bx)}{\Delta^{(N)}x_1}=\frac{1}{x_1}-\frac{1}{x_1+N^{-1}-1};
\]
when $r>1$ and $k_1=1$,
\[
\frac{\Delta^{(N)}\widetilde{\Li}_{\bk}^{\sh, (N)}(\bx)}{\Delta^{(N)}x_1} = \frac{1}{x_1}\widetilde{\Li}_{\bk^{\wedge}_1}^{\sh, (N)}(\bx^{\wedge}_1)-\frac{1}{x_1+N^{-1}-x_2}\left(\widetilde{\Li}_{\bk^{\wedge}_1}^{\sh, (N)}(\bx^{\wedge}_1)-\widetilde{\Li}_{\bk^{\wedge}_1}^{\sh, (N)}(\bx^{\wedge}_2)\bigg|_{x_1+N^{-1}}\right).
\]
\end{theorem}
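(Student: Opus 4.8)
The plan is to compute the forward difference directly from the defining sum, exploiting that $x_1$ enters the summand of $\widetilde{\Li}_{\bk}^{\sh,(N)}(\bx)$ only through the single factor $1/\binom{Nx_1-1}{n_1}$ (it occurs as $x_i$ with $i=1$, and the slot $x_{i+1}$ with $i=0$ is out of range). Writing $T(\bn)$ for the summand and noting that the shift $x_1\mapsto x_1+N^{-1}$ sends $Nx_1-1$ to $Nx_1$, I would first record the elementary identity $\binom{X}{n}/\binom{X-1}{n}=X/(X-n)$, which yields
\[
\frac{1}{\binom{Nx_1}{n_1}}-\frac{1}{\binom{Nx_1-1}{n_1}}=-\frac{n_1}{Nx_1}\,\frac{1}{\binom{Nx_1-1}{n_1}}.
\]
Multiplying by $N$ and summing over $\bn$ collapses the whole difference quotient to the clean expression
\[
\frac{\Delta^{(N)}\widetilde{\Li}_{\bk}^{\sh,(N)}(\bx)}{\Delta^{(N)}x_1}=-\frac{1}{x_1}\sum_{0<n_1<\cdots<n_r<N} n_1\,T(\bn),
\]
so that the entire theorem reduces to understanding the effect of inserting a factor $n_1$ into the summand.

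The first two cases then fall out quickly. When $k_1>1$ the factor $n_1$ lowers the exponent $k_1$ to $k_1-1$, reproducing exactly the summand of $\widetilde{\Li}_{\bk^{\downarrow}_1}^{\sh,(N)}(\bx)$ and giving the first formula. When $r=1$ and $k_1=1$ the factor $n_1$ cancels $1/n_1$, leaving the pure binomial sum $\sum_{0<n_1<N}\binom{N-1}{n_1}/\binom{Nx_1-1}{n_1}$, which I would evaluate in closed form by the telescoping identity of \cref{lem:Sum-Binom} (with $x'=N-2$, $x=Nx_1-1$; the upper-endpoint term vanishes since $\binom{N-2}{N-1}=0$). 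A one-line rearrangement of the result $-(N-1)/\bigl(x_1(N(x_1-1)+1)\bigr)$ then matches $1/x_1-1/(x_1+N^{-1}-1)$.

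The substantial case is $r>1$, $k_1=1$. Here $n_1$ again cancels $1/n_1$, and I would carry out the innermost summation over $0<n_1<n_2$ of the ratio $\binom{Nx_2-1}{n_1}/\binom{Nx_1-1}{n_1}$ via \cref{lem:Sum-Binom}; since $x_1,x_2$ are distinct indeterminates, its hypothesis is met over the field of rational functions. This produces two pieces: a term independent of $n_2$ (the ``constant'' $\binom{\cdot}{0}$ contribution) and a term carrying the factor $\binom{Nx_2-2}{n_2-1}/\binom{Nx_1-1}{n_2-1}$. The constant piece is proportional to $\widetilde{\Li}_{\bk^{\wedge}_1}^{\sh,(N)}(\bx^{\wedge}_1)$, and after simplifying the prefactor $(Nx_2-1)/(N(x_1-x_2)+1)$ I would check that it equals precisely $\frac{1}{x_1}\widetilde{\Li}_{\bk^{\wedge}_1}^{\sh,(N)}(\bx^{\wedge}_1)-\frac{1}{x_1+N^{-1}-x_2}\widetilde{\Li}_{\bk^{\wedge}_1}^{\sh,(N)}(\bx^{\wedge}_1)$, i.e.\ the first two terms on the right-hand side.

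The remaining piece is the crux. I would simplify the $n_2$-dependent binomials using the two identities $\binom{Nx_2-2}{n_2-1}/\binom{Nx_2-1}{n_2}=n_2/(Nx_2-1)$ and $\binom{Nx_1-1}{n_2-1}=\tfrac{n_2}{Nx_1}\binom{Nx_1}{n_2}$; together these turn the leading binomial factor of the remaining sum into $\frac{Nx_1}{Nx_2-1}\cdot\binom{Nx_3-1}{n_2}/\binom{Nx_1}{n_2}$, with the denominator $\binom{Nx_1}{n_2}$ being exactly the shift $\binom{Nx_1-1}{n_2}\big|_{x_1+N^{-1}}$. After relabelling the indices, this sum is visibly $\frac{Nx_1}{Nx_2-1}\,\widetilde{\Li}_{\bk^{\wedge}_1}^{\sh,(N)}(\bx^{\wedge}_2)\big|_{x_1+N^{-1}}$, and the accumulated prefactor collapses to $1/(x_1+N^{-1}-x_2)$, yielding the third term. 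The main obstacle is precisely this identification: one must perform the two binomial simplifications and verify that deleting $x_2$ (passing to $\bx^{\wedge}_2$) together with the shift $x_1\mapsto x_1+N^{-1}$ interacts correctly with the summation bounds, index relabelling, and prefactors. Everything else is routine once the reduction $\frac{\Delta^{(N)}}{\Delta^{(N)}x_1}=-\frac{1}{x_1}\sum n_1 T(\bn)$ is in place.
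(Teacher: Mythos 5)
Your proposal is correct and follows essentially the same route as the paper: isolate the single occurrence of $x_1$ in $\binom{Nx_1-1}{n_1}$, use $1/\binom{Nx_1}{n_1}-1/\binom{Nx_1-1}{n_1}=-\tfrac{n_1}{Nx_1}\cdot 1/\binom{Nx_1-1}{n_1}$ to reduce everything to inserting a factor $n_1$ into the summand, and for $k_1=1$ evaluate the inner $n_1$-sum by the telescoping identity of \cref{lem:Sum-Binom}, identifying the boundary term with $\widetilde{\Li}_{\bk^{\wedge}_1}^{\sh,(N)}(\bx^{\wedge}_2)\big|_{x_1+N^{-1}}$ via the same two binomial simplifications. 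The only cosmetic difference is that the paper first rewrites the summand in the telescoped form $\prod_{i}\binom{N-n_{i-1}-1}{n_i-n_{i-1}}/\binom{Nx_i-n_{i-1}-1}{n_i-n_{i-1}}$, whereas you work with the original product directly; the computations coincide.
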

\begin{proof}
Since
\[
\widetilde{\Li}_{\bk}^{\sh, (N)}(\bx) = \sum_{0 < n_1 < \cdots < n_r< N} \frac{1}{n_1^{k_1} \cdots n_r^{k_r}} \prod_{i=1}^r \frac{\binom{N-n_{i-1}-1}{n_i-n_{i-1}}}{\binom{Nx_i-n_{i-1}-1}{n_i-n_{i-1}}}
\]
with $n_0 = 0$, we have
\begin{align*}
\frac{\Delta^{(N)} \widetilde{\Li}_{\bk}^{\sh, (N)}(\bx)}{\Delta^{(N)} x_1} &= N\sum_{0 < n_1 < \cdots < n_r< N} \frac{1}{n_1^{k_1} \cdots n_r^{k_r}} \left(\frac{\binom{N-1}{n_1}}{\binom{Nx_1}{n_1}} - \frac{\binom{N-1}{n_1}}{\binom{Nx_1-1}{n_1}}\right) \prod_{i=2}^r \frac{\binom{N-n_{i-1}-1}{n_i-n_{i-1}}}{\binom{Nx_i-n_{i-1}-1}{n_i-n_{i-1}}}\\
&= -\frac{1}{x_1} \sum_{0 < n_1 < \cdots < n_r< N} \frac{1}{n_1^{k_1-1} n_2^{k_2} \cdots n_r^{k_r}} \prod_{i=1}^r \frac{\binom{N-n_{i-1}-1}{n_i-n_{i-1}}}{\binom{Nx_i-n_{i-1}-1}{n_i-n_{i-1}}}.
\end{align*}
If $k_1 > 1$, it equals $-x_1^{-1} \widetilde{\Li}_{\bk_1^{\downarrow}}^{\sh, (N)}(\bx)$.
If $r>1$ and $k_1 = 1$, by \cref{lem:Sum-Binom}, we compute
\begin{align*}
&-\frac{1}{x_1}\sum_{0 < n_1 < n_2} \frac{\binom{N-1}{n_1}}{\binom{Nx_1-1}{n_1}} \frac{\binom{N-n_1-1}{n_2-n_1}}{\binom{Nx_2-n_1-1}{n_2-n_1}} \\
&=-\frac{1}{x_1}\frac{\binom{N-1}{n_2}}{\binom{Nx_2-1}{n_2}} \sum_{0 < n_1 < n_2} \frac{\binom{Nx_2-1}{n_1}}{\binom{Nx_1-1}{n_1}}\\
&=\left(\frac{1}{x_1}-\frac{1}{x_1+N^{-1}-x_2}\right)\frac{\binom{N-1}{n_2}}{\binom{Nx_2-1}{n_2}}+\frac{1}{x_1+N^{-1}-x_2}\frac{\binom{N-1}{n_2}}{\binom{Nx_1}{n_2}}
\end{align*}
for each fixed $n_2$, and hence we have the desired result.
The case $r=1$ and $k_1=1$ is also calculated by using \cref{lem:Sum-Binom}.
\end{proof}
This calculation will be used later for an alternative proof of \cref{thm:main}.
On the other hand, for $i>1$, we only directly calculate $\frac{\Delta^{(N)}\ii_{\bk}^{(N)}(\bx)}{\Delta^{(N)}x_i}$, and derive the difference equations for $\frac{\Delta^{(N)}\widetilde{\Li}_{\bk}^{\sh,(N)}(\bx)}{\Delta^{(N)}x_i}$ as a consequence of combining it with \cref{thm:main}.
\begin{theorem}\label{thm:diff_Li_2}
We assume that $r>1$ and $i>1$.
When $k_{i-1}>1$ and $k_i>1$,
\[
\frac{\Delta^{(N)}\widetilde{\Li}_{\bk}^{\sh,(N)}(\bx)}{\Delta^{(N)}x_i} = \frac{1}{x_i}\left(\widetilde{\Li}^{\sh,(N)}_{\bk^{\downarrow}_{i-1}}(\bx)\bigg|_{x_i+N^{-1}}-\widetilde{\Li}^{\sh,(N)}_{\bk^{\downarrow}_i}(\bx)\right);
\]
when $k_{i-1}>1$ and $k_i=1$,
\begin{align*}
\frac{\Delta^{(N)} \widetilde{\Li}_{\bk}^{\sh,(N)}(\bx)}{\Delta^{(N)} x_i} &= \frac{1}{x_i}\left(\widetilde{\Li}_{\bk_{i-1}^\downarrow}^{\sh,(N)}(\bx)\bigg|_{x_i+N^{-1}}+\widetilde{\Li}_{\bk_i^\wedge}^{\sh,(N)}(\bx_i^\wedge)\right)\\
&\quad-\frac{1}{x_i+N^{-1}-x_{i+1}}\left(\widetilde{\Li}_{\bk_i^\wedge}^{\sh,(N)}(\bx_i^\wedge) - \widetilde{\Li}_{\bk_i^\wedge}^{\sh,(N)}(\bx_{i+1}^\wedge)\bigg|_{x_i+N^{-1}}\right)\\
&\quad-\frac{1}{N}\frac{1}{x_i(x_i+N^{-1}-x_{i+1})}\left(\widetilde{\Li}_{(\bk_{i-1}^\downarrow)_i^\wedge}^{\sh,(N)}(\bx_{i+1}^\wedge)\bigg|_{x_i+N^{-1}} - \widetilde{\Li}_{(\bk_{i-1}^\downarrow)_i^\wedge}^{\sh,(N)}(\bx_i^\wedge)\right);
\end{align*}
when $k_{i-1}=1$ and $k_i>1$,
\begin{align*}
\frac{\Delta^{(N)}\widetilde{\Li}_{\bk}^{\sh,(N)}(\bx)}{\Delta^{(N)}x_i}&=-\frac{1}{x_i-x_{i-1}} \left(\widetilde{\Li}_{\bk_{i-1}^\wedge}^{\sh,(N)}(\bx_{i-1}^\wedge) - \widetilde{\Li}_{\bk_{i-1}^\wedge}^{\sh,(N)}(\bx_i^\wedge) \right) \\
&\quad- \frac{1}{x_i} \left(\widetilde{\Li}_{\bk_{i-1}^\wedge}^{\sh,(N)}(\bx_i^\wedge) + \widetilde{\Li}_{\bk_i^\downarrow}^{\sh,(N)}(\bx)\right)\\
&\quad-\frac{1}{N}\frac{1}{x_i(x_i-x_{i-1})}\left(\widetilde{\Li}_{(\bk^{\downarrow}_i)^{\wedge}_{i-1}}^{\sh,(N)}(\bx^{\wedge}_i)-\widetilde{\Li}_{(\bk^{\downarrow}_i)^{\wedge}_{i-1}}^{\sh,(N)}(\bx^{\wedge}_{i-1})\right);
\end{align*}
when $k_{i-1}=k_i=1$,
\begin{align*}
\frac{\Delta^{(N)} \widetilde{\Li}_{\bk}^{\sh,(N)}(\bx)}{\Delta^{(N)} x_i} &= -\frac{1}{x_i - x_{i-1}} \left(\widetilde{\Li}_{\bk_{i-1}^\wedge}^{\sh,(N)}(\bx_{i-1}^\wedge) - \widetilde{\Li}_{\bk_i^\wedge}^{\sh,(N)} (\bx_i^\wedge)\right) \\
&\quad- \frac{1}{x_i+N^{-1}-x_{i+1}} \left(\widetilde{\Li}_{\bk_i^\wedge}^{\sh,(N)}(\bx_i^\wedge) - \widetilde{\Li}_{\bk_i^\wedge}^{\sh,(N)} (\bx_{i+1}^\wedge)\bigg|_{x_i+N^{-1}} \right).
\end{align*}
For the case $i=r$, we should interpret as $x_{r+1}=1$ and 
\[
\widetilde{\Li}_{\bk_r^\wedge}^{\sh,(N)}(\bx_{r+1}^\wedge)\bigg|_{x_r+N^{-1}} = \widetilde{\Li}_{(\bk_{r-1}^\downarrow)_r^\wedge}^{\sh,(N)}(\bx_{r+1}^\wedge)\bigg|_{x_r+N^{-1}}=0.
\]
\end{theorem}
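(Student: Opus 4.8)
The plan is to avoid any direct manipulation of the series $\widetilde{\Li}_{\bk}^{\sh,(N)}$ for $i>1$ and instead deduce \cref{thm:diff_Li_2} from the already-established \cref{thm:diff} (the difference equations for the integral-side sums $\ii_{\bk}^{(N)}$) together with the main identity \eqref{eq:main}. The structural fact that makes this work is that the difference quotient $\Delta^{(N)}/\Delta^{(N)}x_i$ is $\QQ$-linear and that \eqref{eq:main} is an equality of rational functions in the indeterminates $\bx$. Writing $r=\dep(\bk)$, one therefore has
\[
\frac{\Delta^{(N)}\widetilde{\Li}_{\bk}^{\sh,(N)}(\bx)}{\Delta^{(N)}x_i}=(-1)^{r}\,\frac{\Delta^{(N)}\ii_{\bk}^{(N)}(\bx)}{\Delta^{(N)}x_i},
\]
and every $\ii^{(N)}$ appearing on the right-hand side of \cref{thm:diff} can be rewritten back in terms of $\widetilde{\Li}^{\sh,(N)}$ by applying \eqref{eq:main} a second time.

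First I would record the depths of the indices that occur. The ``$\downarrow$'' indices $\bk^{\downarrow}_{i-1}$ and $\bk^{\downarrow}_i$ retain depth $r$, whereas the ``$\wedge$'' indices $\bk^{\wedge}_{i-1}$, $\bk^{\wedge}_i$ and the mixed indices $(\bk^{\downarrow}_{i-1})^{\wedge}_i$, $(\bk^{\downarrow}_i)^{\wedge}_{i-1}$ all have depth $r-1$. Then, for each of the four cases of \cref{thm:diff} with $1<i<r$, I would substitute $\ii^{(N)}_{\bl}=(-1)^{\dep(\bl)}\widetilde{\Li}^{\sh,(N)}_{\bl}$ into each summand and collect the resulting prefactor $(-1)^{r}\cdot(-1)^{\dep(\bl)}$.

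The whole argument reduces to this sign bookkeeping. A depth-$r$ term carries $(-1)^{r}\cdot(-1)^{r}=+1$ and is reproduced unchanged, while a depth-$(r-1)$ term carries $(-1)^{r}\cdot(-1)^{r-1}=-1$ and flips sign; this single observation accounts for every sign discrepancy between \cref{thm:diff} and \cref{thm:diff_Li_2}. For instance, when $k_{i-1}>1$ and $k_i=1$, the term $-\frac{1}{x_i}\ii^{(N)}_{\bk^{\wedge}_i}(\bx^{\wedge}_i)$ becomes $+\frac{1}{x_i}\widetilde{\Li}^{\sh,(N)}_{\bk^{\wedge}_i}(\bx^{\wedge}_i)$, exactly as stated, and the same computation handles the remaining lines of that case and the other three cases verbatim.

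Two bookkeeping points remain, neither of which is a real obstacle. The substitution $\big|_{x_i+N^{-1}}$ commutes with the passage between $\ii^{(N)}$ and $\widetilde{\Li}^{\sh,(N)}$, because \eqref{eq:main} holds identically in the indeterminates and the depth is unchanged by the evaluation $x_i\mapsto x_i+N^{-1}$; thus a term such as $\ii_{\bk^{\wedge}_i}^{(N)}(\bx_{i+1}^\wedge)\big|_{x_i+N^{-1}}$ becomes $(-1)^{r-1}\widetilde{\Li}_{\bk^{\wedge}_i}^{\sh,(N)}(\bx_{i+1}^\wedge)\big|_{x_i+N^{-1}}$. Likewise the boundary prescriptions for $i=r$ correspond under \eqref{eq:main}, since the quantities set to zero in the two theorems are related by that identity. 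The only genuinely difficult step in the whole chain is the direct verification of \cref{thm:diff} for $\ii^{(N)}_{\bk}$ itself, which we take for granted here; granting it, \cref{thm:diff_Li_2} follows as a purely formal consequence.
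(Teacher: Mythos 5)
Your proposal is correct and is essentially identical to the paper's own proof: the paper derives \cref{thm:diff_Li_2} by combining \cref{thm:main} with \cref{thm:diff_ii_2_1,thm:diff_ii_2_2,thm:diff_ii_2_3,thm:diff_ii_2_4}, which is exactly your substitution $\ii^{(N)}_{\bl}=(-1)^{\dep(\bl)}\widetilde{\Li}^{\sh,(N)}_{\bl}$ term by term with the depth-parity sign bookkeeping. Your explicit tracking of which indices have depth $r$ versus $r-1$ correctly accounts for all the sign flips, so nothing is missing.
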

\begin{proof}
This follows from \cref{thm:main} and \cref{thm:diff_ii_2_1,thm:diff_ii_2_2,thm:diff_ii_2_3,thm:diff_ii_2_4}.
\end{proof}
% --------------------------------------------------------------------------
\subsection{Difference equations for $\ii_{\bk}^{(N)}(\bx)$}
% --------------------------------------------------------------------------
\begin{theorem}\label{thm:diff_ii_1}
When $k_1>1$,
\[
\frac{\Delta^{(N)}\ii_{\bk}^{(N)}(\bx)}{\Delta^{(N)}x_1}=-\frac{1}{x_1}\ii_{\bk^{\downarrow}_1}^{(N)}(\bx);
\]
when $r=1$ and $k_1=1$, 
\[
\frac{\Delta^{(N)}\ii_{\bk}^{(N)}(\bx)}{\Delta^{(N)}x_1}=\frac{1}{x_1+N^{-1}-1}-\frac{1}{x_1};
\]
when $r>1$ and $k_1=1$,
\[
\frac{\Delta^{(N)}\ii_{\bk}^{(N)}(\bx)}{\Delta^{(N)}x_1}=-\frac{1}{x_1}\ii_{\bk^{\wedge}_1}^{(N)}(\bx^{\wedge}_1)+\frac{1}{x_1+N^{-1}-x_2}\left(\ii_{\bk^{\wedge}_1}^{(N)}(\bx^{\wedge}_1)-\ii_{\bk^{\wedge}_1}^{(N)}(\bx^{\wedge}_2)\bigg|_{x_1+N^{-1}}\right).
\]
\end{theorem}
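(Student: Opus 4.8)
The plan is to exploit that $x_1$ enters the summand of $\ii_{\bk}^{(N)}(\bx)$ through the single factor $1/(n_{1,1}-Nx_1)$ attached to the smallest index of the first block. Since $x_1\mapsto x_1+N^{-1}$ replaces $Nx_1$ by $Nx_1+1$, it turns this factor into $1/((n_{1,1}-1)-Nx_1)$, i.e.\ into the \emph{same} factor evaluated at the predecessor $n_{1,1}-1$. Writing $f(m)\coloneqq 1/(m-Nx_1)$ and letting $R$ denote the (entirely $x_1$-free) product of all remaining factors, the difference quotient becomes
\[
\frac{\Delta^{(N)}\ii_{\bk}^{(N)}(\bx)}{\Delta^{(N)}x_1}=N\sum\bigl(f(n_{1,1}-1)-f(n_{1,1})\bigr)\,R,
\]
the outer sum being over admissible configurations. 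First I would freeze every variable except $n_{1,1}$ and telescope over $n_{1,1}$: the lower endpoint is always $n_{1,1}=0$, contributing $f(0)=1/(-Nx_1)$, while the upper endpoint is fixed by whichever constraint bounds $n_{1,1}$ from above. This is exactly what splits the argument into the three cases: $n_{1,1}<N$ when $r=1,k_1=1$; $n_{1,1}\le n_{1,2}$ when $k_1>1$; and $n_{1,1}<n_{2,1}$ when $r>1,k_1=1$.

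The case $r=1,k_1=1$ is then immediate: the telescoped inner sum is $f(0)-f(N-1)$, and since $(N-1)-Nx_1=-N(x_1+N^{-1}-1)$, multiplying by $N$ yields $\tfrac{1}{x_1+N^{-1}-1}-\tfrac{1}{x_1}$. For $k_1>1$ the telescoping (now with upper endpoint $n_{1,2}$) leaves $f(0)-f(n_{1,2})$, and the identity
\[
\frac{1}{-Nx_1}-\frac{1}{n_{1,2}-Nx_1}=\frac{n_{1,2}}{(-Nx_1)(n_{1,2}-Nx_1)}
\]
lets me absorb the surviving factor $1/n_{1,2}$, giving $-\tfrac{1}{x_1}\cdot 1/(n_{1,2}-Nx_1)$; reindexing the first block to $(n_{1,2},\dots,n_{1,k_1})$ (one fewer index) recognizes this as $-\tfrac1{x_1}\ii_{\bk^{\downarrow}_1}^{(N)}(\bx)$, and here no configurations are gained or lost at the endpoints.

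For $r>1,k_1=1$ the telescoping produces two boundary terms. The term $f(0)=1/(-Nx_1)$, after multiplication by $N$, gives $-\tfrac1{x_1}$ times the sum over blocks $2,\dots,r$, namely $\ii_{\bk^{\wedge}_1}^{(N)}(\bx^{\wedge}_1)$. For the term $-f(n_{2,1}-1)$ I would combine it with the block-$2$ factor $1/(n_{2,1}-Nx_2)$ and run a partial-fraction decomposition in $n_{2,1}$; the crucial observation is
\[
\frac{1}{(n_{2,1}-1)-Nx_1}=\frac{1}{n_{2,1}-N(x_1+N^{-1})},
\]
which identifies this boundary contribution as the block-$2$ factor evaluated at the shifted variable $x_1+N^{-1}$ — this is precisely the origin of the notation $\ii_{\bk^{\wedge}_1}^{(N)}(\bx^{\wedge}_2)\bigl|_{x_1+N^{-1}}$. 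The partial-fraction coefficient is $\pm\bigl(N(x_1+N^{-1}-x_2)\bigr)^{-1}$, assembling into $\tfrac{1}{x_1+N^{-1}-x_2}\bigl(\ii_{\bk^{\wedge}_1}^{(N)}(\bx^{\wedge}_1)-\ii_{\bk^{\wedge}_1}^{(N)}(\bx^{\wedge}_2)\bigl|_{x_1+N^{-1}}\bigr)$.

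The hard part will be the bookkeeping in this last case. The original sum requires $n_{2,1}\ge 2$ (to leave room for $n_{1,1}\ge 1$), whereas the three target objects $\ii_{\bk^{\wedge}_1}^{(N)}(\bx^{\wedge}_1)$ and $\ii_{\bk^{\wedge}_1}^{(N)}(\bx^{\wedge}_2)\bigl|_{x_1+N^{-1}}$ all range over $n_{2,1}\ge 1$. Rewriting the telescoped expression in terms of full $\ii$'s therefore introduces three spurious $n_{2,1}=1$ contributions, and I must check they cancel. Denoting by $R$ the common remaining block-$2,\dots,r$ factor at $n_{2,1}=1$, these contributions total
\[
R\left[\frac{1}{x_1(1-Nx_2)}-\frac{1}{(x_1+N^{-1}-x_2)(1-Nx_2)}-\frac{1}{Nx_1(x_1+N^{-1}-x_2)}\right],
\]
and a short simplification using $N^{-1}-x_2=(1-Nx_2)/N$ shows the bracket vanishes identically. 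Granting that cancellation, the remaining clean terms are exactly the asserted right-hand side.
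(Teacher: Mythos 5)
Your proposal is correct and follows essentially the same route as the paper: telescope the difference $\frac{1}{n_{1,1}-1-Nx_1}-\frac{1}{n_{1,1}-Nx_1}$ over $n_{1,1}$, evaluate the boundary terms according to which constraint caps $n_{1,1}$, and apply a partial-fraction decomposition in $n_{2,1}$ for the case $r>1$, $k_1=1$. The only cosmetic difference is that you verify the $n_{2,1}=1$ boundary cancellation explicitly, whereas the paper's per-$n_{2,1}$ identity silently covers that case because both sides vanish there.
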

\begin{proof}
By definition, we have
\begin{align*}
\frac{\Delta^{(N)} \ii_{\bk}^{(N)}(\bx)}{\Delta^{(N)} x_1} &= N\sum_{\substack{0< n_{j,1}\leq\cdots\leq n_{j,k_j}< N \ (1\leq j\leq r)\\ n_{j,k_j}<n_{j+1,1} \ (1\leq j<r)}} \left(\frac{1}{n_{1,1} - 1 - Nx_1} - \frac{1}{n_{1,1} - Nx_1} \right)\\
&\quad \times \frac{1}{n_{1,2} \cdots n_{1,k_1}} \prod_{j=2}^r\frac{1}{(n_{j,1}-Nx_j)n_{j,2}\cdots n_{j,k_j}}.
\end{align*}
If $k_1 > 1$, then for each fixed $n_{1,2}$, we have
\[
N\sum_{0 < n_{1,1} \le n_{1,2}} \left(\frac{1}{n_{1,1} - 1 - Nx_1} - \frac{1}{n_{1,1} - Nx_1} \right) = -\frac{n_{1,2}}{x_1}\cdot\frac{1}{n_{1,2}-Nx_1},
\]
which implies that
\[
\frac{\Delta^{(N)} \ii_{\bk}^{(N)}(\bx)}{\Delta^{(N)} x_1} = -\frac{1}{x_1} \ii_{\bk^{\downarrow}_1}^{(N)}(\bx).
\]
The case $r=1$ and $k_1=1$ is easy.
If $r>1$ and $k_1 = 1$, then for each fixed $n_{2,1}$, we have
\begin{align*}
&N\sum_{0 < n_{1,1} < n_{2,1}} \left(\frac{1}{n_{1,1} - 1 - Nx_1} - \frac{1}{n_{1,1} - Nx_1} \right) \frac{1}{n_{2,1}-Nx_2}\\
&=-\frac{1}{x_1}\cdot\frac{1}{n_{2,1}-Nx_2}+\frac{1}{x_1+N^{-1}-x_2}\left(\frac{1}{n_{2,1}-Nx_2}-\frac{1}{n_{2,1}-N(x_1+N^{-1})}\right),
\end{align*}
which implies the desired result.
\end{proof}
\begin{theorem}\label{thm:diff_ii_2_1}
We assume that $r>1$ and $i>1$.
When $k_{i-1}>1$ and $k_i>1$,
\[
\frac{\Delta^{(N)}\ii_{\bk}^{(N)}(\bx)}{\Delta^{(N)}x_i} = \frac{1}{x_i}\left(\ii^{(N)}_{\bk^{\downarrow}_{i-1}}(\bx)\bigg|_{x_i+N^{-1}}-\ii^{(N)}_{\bk^{\downarrow}_i}(\bx)\right).
\]
\end{theorem}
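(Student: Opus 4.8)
The plan is to take the difference quotient term by term and reduce the whole computation to a single telescoping sum in the index $n_{i,1}$. In the summand defining $\ii_{\bk}^{(N)}(\bx)$ the variable $x_i$ enters only through the factor $(n_{i,1}-Nx_i)^{-1}$; hence the difference quotient replaces this factor by
\[
N\left(\frac{1}{n_{i,1}-1-Nx_i}-\frac{1}{n_{i,1}-Nx_i}\right)
\]
and leaves every other factor unchanged. Since the rest of the summand does not involve $n_{i,1}$, and the constraints confine $n_{i,1}$ to the range $n_{i-1,k_{i-1}}<n_{i,1}\le n_{i,2}$ (this is where $k_i>1$ is used, so that the upper limit $n_{i,2}$ exists), the inner sum telescopes, for each fixed choice of the remaining indices, to
\[
\frac{N}{n_{i-1,k_{i-1}}-Nx_i}-\frac{N}{n_{i,2}-Nx_i}.
\]

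First I would handle the upper boundary term $-N(n_{i,2}-Nx_i)^{-1}$. Multiplying by the surviving block-$i$ factor $(n_{i,2}\cdots n_{i,k_i})^{-1}$ and using the partial fraction $N\bigl((n_{i,2}-Nx_i)n_{i,2}\bigr)^{-1}=x_i^{-1}\bigl((n_{i,2}-Nx_i)^{-1}-n_{i,2}^{-1}\bigr)$ splits this contribution into $-x_i^{-1}\ii_{\bk^{\downarrow}_i}^{(N)}(\bx)$---obtained by relabelling $n_{i,2},\dots,n_{i,k_i}$ as the $k_i-1$ indices of the shortened $i$-th block, the inherited constraint $n_{i-1,k_{i-1}}<n_{i,2}$ being exactly the required one---plus a residual term $+x_i^{-1}(n_{i,2}\cdots n_{i,k_i})^{-1}$ still carrying the full $(i-1)$-st block.

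Next I would handle the lower boundary term $+N(n_{i-1,k_{i-1}}-Nx_i)^{-1}$. Because $k_{i-1}>1$, the last index of block $i-1$ carries the plain factor $n_{i-1,k_{i-1}}^{-1}$, so the analogous partial fraction $N\bigl((n_{i-1,k_{i-1}}-Nx_i)n_{i-1,k_{i-1}}\bigr)^{-1}=x_i^{-1}\bigl((n_{i-1,k_{i-1}}-Nx_i)^{-1}-n_{i-1,k_{i-1}}^{-1}\bigr)$ again gives two pieces. The second piece restores the full $(i-1)$-st block and equals $-x_i^{-1}(n_{i,2}\cdots n_{i,k_i})^{-1}$ times that block, summed over the same range as the residual above, so the two cancel. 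For the first piece I would apply the substitution $m_{i,1}=n_{i-1,k_{i-1}}+1$, $m_{i,\ell}=n_{i,\ell}$ $(\ell\ge 2)$, $m_{i-1,j}=n_{i-1,j}$ $(j\le k_{i-1}-1)$: then $(n_{i-1,k_{i-1}}-Nx_i)^{-1}=(m_{i,1}-N(x_i+N^{-1}))^{-1}$, which is precisely the shifted first factor of the $i$-th block, so this piece sums to $+x_i^{-1}\ii_{\bk^{\downarrow}_{i-1}}^{(N)}(\bx)\big|_{x_i+N^{-1}}$. Adding the two surviving pieces yields the claimed formula.

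The main obstacle is the verification that the substitution above is a bijection onto the index set of $\ii_{\bk^{\downarrow}_{i-1}}^{(N)}$: one must check that the strict inequality $m_{i-1,k_{i-1}-1}<m_{i,1}$ and the positivity $m_{i,1}\ge 2$ both follow from the original constraints together with $k_{i-1}>1$ (which keeps block $i-1$ nonempty after the reduction, forcing $n_{i-1,k_{i-1}}\ge 1$), and that the index shift by $1$ is exactly compensated by the weight shift $x_i\mapsto x_i+N^{-1}$. Once this correspondence and the cancellation of the two residual terms are confirmed, everything else is routine partial-fraction bookkeeping.
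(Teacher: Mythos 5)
Your proposal is correct and follows essentially the same route as the paper's proof: telescoping the difference quotient over $n_{i,1}$ in the range $n_{i-1,k_{i-1}}<n_{i,1}\le n_{i,2}$, applying the partial fraction $\frac{N}{n(n-Nx_i)}=\frac{1}{x_i}\bigl(\frac{1}{n-Nx_i}-\frac{1}{n}\bigr)$ to both boundary terms, and shifting $n_{i-1,k_{i-1}}\mapsto n_{i,1}-1$ to absorb the lower boundary into $\ii^{(N)}_{\bk^{\downarrow}_{i-1}}(\bx)\big|_{x_i+N^{-1}}$. The only cosmetic difference is that the paper performs the two partial fractions simultaneously so that the $\frac{1}{n_{i-1,k_{i-1}}n_{i,2}}$ terms cancel in a single display, whereas you track and cancel the two residual pieces separately.
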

\begin{proof}
This follows from
\begin{align*}
&N\sum_{n_{i-1, k_{i-1}-1} \le n_{i-1, k_{i-1}} < n_{i,1} \le n_{i,2}} \frac{1}{n_{i-1, k_{i-1}}} \left(\frac{1}{n_{i,1} - 1 - Nx_i} - \frac{1}{n_{i,1}-Nx_i}\right) \frac{1}{n_{i,2}}\\
&= N\sum_{n_{i-1, k_{i-1}-1} \le n_{i-1, k_{i-1}} < n_{i,2}} \frac{1}{n_{i-1, k_{i-1}}} \left(\frac{1}{n_{i-1,k_{i-1}} - Nx_i} - \frac{1}{n_{i,2}-Nx_i}\right) \frac{1}{n_{i,2}}\\
&= \frac{1}{x_i}\sum_{n_{i-1, k_{i-1}-1} \le n_{i-1, k_{i-1}} < n_{i,2}} \left(\frac{1}{(n_{i-1,k_{i-1}} - Nx_i) n_{i,2}} - \frac{1}{n_{i-1,k_{i-1}} (n_{i,2} - Nx_i)} \right)
\end{align*}
and
\begin{align*}
&\sum_{n_{i-1, k_{i-1}-1} \le n_{i-1, k_{i-1}} < n_{i,2}} \frac{1}{n_{i-1,k_{i-1}} - Nx_i}\cdot\frac{1}{n_{i,2}} \\
&= \sum_{n_{i-1, k_{i-1}-1} < n_{i,1} \le n_{i,2}} \frac{1}{n_{i,1} - N(x_i+N^{-1})}\cdot\frac{1}{n_{i,2}}.\qedhere
\end{align*}
\end{proof}
\begin{theorem}\label{thm:diff_ii_2_2}
We assume that $r>1$.
When $1<i<r$, $k_{i-1}>1$, and $k_i=1$,
\begin{align*}
\frac{\Delta^{(N)} \ii_{\bk}^{(N)}(\bx)}{\Delta^{(N)} x_i} &= \frac{1}{x_i}\left(\ii_{\bk_{i-1}^\downarrow}^{(N)}(\bx)\bigg|_{x_i+N^{-1}}-\ii_{\bk_i^\wedge}^{(N)}(\bx_i^\wedge)\right)\\
&\quad+\frac{1}{x_i+N^{-1}-x_{i+1}}\left(\ii_{\bk_i^\wedge}^{(N)}(\bx_i^\wedge) - \ii_{\bk_i^\wedge}^{(N)}(\bx_{i+1}^\wedge)\bigg|_{x_i+N^{-1}}\right)\\
&\quad+\frac{1}{N}\frac{1}{x_i(x_i+N^{-1}-x_{i+1})}\left(\ii^{(N)}_{(\bk_{i-1}^\downarrow)_i^\wedge}(\bx_{i+1}^\wedge)\bigg|_{x_i+N^{-1}} - \ii^{(N)}_{(\bk_{i-1}^\downarrow)_i^\wedge}(\bx_i^\wedge)\right);
\end{align*}
when $k_{r-1}>1$ and $k_r=1$,
\begin{align*}
\frac{\Delta^{(N)} \ii_{\bk}^{(N)}(\bx)}{\Delta^{(N)} x_r} &= \frac{1}{x_r}\ii_{\bk_{r-1}^\downarrow}^{(N)}(\bx)\bigg|_{x_r+N^{-1}}+\left(\frac{1}{x_r+N^{-1}-1}-\frac{1}{x_r}\right)\ii_{\bk_r^\wedge}^{(N)}(\bx_r^\wedge)\\
&\quad-\frac{1}{N}\frac{1}{x_r(x_r+N^{-1}-1)}\ii^{(N)}_{(\bk_{r-1}^\downarrow)_r^\wedge}(\bx_r^\wedge).
\end{align*}
\end{theorem}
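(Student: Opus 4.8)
The plan is to proceed in the spirit of the proof of \cref{thm:diff_ii_2_1}, paying close attention to the fact that, because $k_i=1$, the $i$-th block consists of the single variable $n_{i,1}$, whose only appearance in $\ii_{\bk}^{(N)}(\bx)$ is through the factor $1/(n_{i,1}-Nx_i)$ and through the two constraints $n_{i-1,k_{i-1}}<n_{i,1}<n_{i+1,1}$. Since every other factor is independent of $x_i$, expanding the difference quotient gives
\[
\frac{\Delta^{(N)}\ii_{\bk}^{(N)}(\bx)}{\Delta^{(N)}x_i}=N\sum\left(\frac{1}{n_{i,1}-1-Nx_i}-\frac{1}{n_{i,1}-Nx_i}\right)\cdot R,
\]
where $R$ collects the remaining factors. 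First I would fix all variables other than $n_{i,1}$ and sum over $n_{i,1}$ in the range $n_{i-1,k_{i-1}}<n_{i,1}<n_{i+1,1}$. With $h(n)=1/(n-Nx_i)$ the summand equals $N(h(n-1)-h(n))$ and telescopes, producing a lower boundary term $N/(n_{i-1,k_{i-1}}-Nx_i)$ and an upper boundary term $-N/(n_{i+1,1}-N(x_i+N^{-1}))$. Note the upper boundary is automatically ``shifted'' by $x_i\mapsto x_i+N^{-1}$ because of the $-1$, while the lower boundary is not; this asymmetry is the origin of the extra $N^{-1}$ terms.

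The upper boundary term is the easy one. I would combine $-N/(n_{i+1,1}-N(x_i+N^{-1}))$ with the first factor $1/(n_{i+1,1}-Nx_{i+1})$ of block $i+1$ and apply the partial fraction identity $\frac{1}{(p-\alpha)(p-\beta)}=\frac{1}{\alpha-\beta}\bigl(\frac{1}{p-\alpha}-\frac{1}{p-\beta}\bigr)$ with $\alpha=N(x_i+N^{-1})$ and $\beta=Nx_{i+1}$, so that $N/(\alpha-\beta)=1/(x_i+N^{-1}-x_{i+1})$. The two resulting pieces are exactly $\ii_{\bk_i^\wedge}^{(N)}(\bx_i^\wedge)$ (block $i+1$ retaining $x_{i+1}$) and $\ii_{\bk_i^\wedge}^{(N)}(\bx_{i+1}^\wedge)\big|_{x_i+N^{-1}}$ (block $i+1$ now carrying the shifted variable), entering with opposite signs and common prefactor $1/(x_i+N^{-1}-x_{i+1})$; this is precisely the second line of the asserted formula.

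The lower boundary term is where the real work lies. Since $k_{i-1}>1$, block $i-1$ ends in a factor $1/n_{i-1,k_{i-1}}$, and I would combine it with $N/(n_{i-1,k_{i-1}}-Nx_i)$ through $\frac{N}{n(n-Nx_i)}=\frac{1}{x_i}\bigl(\frac{1}{n-Nx_i}-\frac{1}{n}\bigr)$. The $-1/n$ piece restores block $i-1$ intact and glued directly to block $i+1$, i.e.\ it is precisely $-\frac{1}{x_i}\ii_{\bk_i^\wedge}^{(N)}(\bx_i^\wedge)$. For the $1/(n-Nx_i)$ piece I want to read the last index $n_{i-1,k_{i-1}}$ as the single index of block $i$ in $\ii_{\bk_{i-1}^\downarrow}^{(N)}(\bx)\big|_{x_i+N^{-1}}$. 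The obstacle is that this reinterpretation demands two corrections: replacing $1/(n-Nx_i)$ by the shifted $1/(n-N(x_i+N^{-1}))$ and turning the intra-block bound $n_{i-1,k_{i-1}-1}\le n$ into the inter-block bound $n_{i-1,k_{i-1}-1}<n$. I would implement these via $\frac{1}{n-Nx_i}=\frac{1}{n-N(x_i+N^{-1})}-\frac{1}{(n-Nx_i)(n-N(x_i+N^{-1}))}$ and by splitting off the $n=n_{i-1,k_{i-1}-1}$ term. The key point is that the double-pole sum telescopes a second time, via $\frac{1}{(n-Nx_i)(n-N(x_i+N^{-1}))}=\frac{1}{n-N(x_i+N^{-1})}-\frac{1}{n-Nx_i}$; its upper boundary, combined once more with block $i+1$ through the same partial fraction as above, produces exactly the two $N^{-1}$ terms $\pm\frac{1}{N}\frac{1}{x_i(x_i+N^{-1}-x_{i+1})}\ii_{(\bk_{i-1}^\downarrow)_i^\wedge}^{(N)}(\,\cdot\,)$ of the third line, while its lower boundary cancels against the split-off $n=n_{i-1,k_{i-1}-1}$ term. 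What remains is $\frac{1}{x_i}\ii_{\bk_{i-1}^\downarrow}^{(N)}(\bx)\big|_{x_i+N^{-1}}$, giving the first and third lines.

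For $i=r$ the only change is that the $n_{r,1}$-sum runs up to $N$ rather than $n_{r+1,1}$, so the upper boundary of the first telescoping is $-N/(N-1-Nx_r)=1/(x_r+N^{-1}-1)$ with no block $i+1$ to combine with, contributing $1/(x_r+N^{-1}-1)\,\ii_{\bk_r^\wedge}^{(N)}(\bx_r^\wedge)$; the second telescoping likewise terminates at $N-1$ and directly yields $-\frac{1}{N}\frac{1}{x_r(x_r+N^{-1}-1)}\ii_{(\bk_{r-1}^\downarrow)_r^\wedge}^{(N)}(\bx_r^\wedge)$. Under the stated convention $x_{r+1}=1$ together with the vanishing of the terms carrying $\bx_{r+1}^\wedge$, the general computation specializes to the second displayed formula. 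The hard part throughout is the bookkeeping of the lower boundary: tracking which gluings use $\le$ versus $<$ and which factors carry $x_i$ versus the shift $x_i+N^{-1}$, since it is precisely the mismatch between these that generates the $N^{-1}$-order corrections that are absent from Goncharov's continuous formula in \cref{thm:Goncharov_diff}.
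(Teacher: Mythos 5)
Your proposal is correct and follows essentially the same route as the paper's proof: telescope the difference quotient over $n_{i,1}$, then apply partial fractions to the two boundary terms, with the mismatch between the unshifted lower boundary and the shift required to recognize $\ii_{\bk_{i-1}^\downarrow}^{(N)}(\bx)\big|_{x_i+N^{-1}}$ producing exactly the order-$N^{-1}$ corrections. The only cosmetic difference is that the paper realizes that shift by reindexing $n_{i,1}\mapsto n_{i,1}+1$ and splitting off the term $n_{i,1}=n_{i+1,1}$ (resp.\ $n_{r,1}=N$), whereas you achieve the same effect via a second telescoping identity; the resulting boundary contributions are identical.
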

\begin{proof}
The case $i<r$ follows from
\begin{align*}
&N\sum_{n_{i-1, k_{i-1}-1} \le n_{i-1, k_{i-1}} < n_{i,1} < n_{i+1, 1}} \frac{1}{n_{i-1, k_{i-1}}} \\
&\qquad\qquad\times\left(\frac{1}{n_{i,1}-1-Nx_i} - \frac{1}{n_{i,1}-Nx_i} \right) \frac{1}{n_{i+1,1} - Nx_{i+1}}\\
&= N \sum_{n_{i-1, k_{i-1}-1} \le n_{i-1, k_{i-1}} < n_{i+1, 1}} \frac{1}{n_{i-1, k_{i-1}}} \\
&\qquad\qquad \times \left(\frac{1}{n_{i-1, k_{i-1}} - Nx_i} - \frac{1}{n_{i+1,1} - 1-Nx_i}\right) \frac{1}{n_{i+1,1}-Nx_{i+1}}\\
&= \sum_{n_{i-1, k_{i-1}-1} \le n_{i-1, k_{i-1}} < n_{i+1, 1}} \left[\frac{1}{x_i} \left(\frac{1}{n_{i-1, k_{i-1}} -Nx_i} - \frac{1}{n_{i-1, k_{i-1}}}\right) \frac{1}{n_{i+1,1}-Nx_{i+1}}\right.\\
&\qquad\qquad + \left.\frac{1}{x_i +N^{-1} - x_{i+1}}\cdot\frac{1}{n_{i-1, k_{i-1}}} \left(\frac{1}{n_{i+1,1}-Nx_{i+1}} - \frac{1}{n_{i+1,1}-N(x_i+N^{-1})} \right) \right]
\end{align*}
and
\begin{align*}
&\sum_{n_{i-1,k_{i-1}-1} \le n_{i-1, k_{i-1}} < n_{i+1,1}} \frac{1}{n_{i-1,k_{i-1}}-Nx_i}\cdot \frac{1}{n_{i+1,1}-Nx_{i+1}}\\
&= \sum_{n_{i-1,k_{i-1}-1} < n_{i,1} \le n_{i+1,1}} \frac{1}{n_{i,1}-N(x_i+N^{-1})} \cdot \frac{1}{n_{i+1,1}-Nx_{i+1}}\\
&= \sum_{n_{i-1,k_{i-1}-1} < n_{i,1} < n_{i+1,1}} \frac{1}{n_{i,1}-N(x_i+N^{-1})} \cdot \frac{1}{n_{i+1,1}-Nx_{i+1}}\\
&\quad+ \frac{1}{N}\cdot\frac{1}{x_i+N^{-1}-x_{i+1}}\left(\frac{1}{n_{i+1,1}-N(x_i+N^{-1})}-\frac{1}{n_{i+1,1}-Nx_{i+1}}\right).
\end{align*}
The cases $i=r$ is similar.
\end{proof}
\begin{theorem}\label{thm:diff_ii_2_3}
We assume that $r>1$ and $i>1$.
When $k_{i-1}=1$ and $k_i>1$,
\begin{align*}
\frac{\Delta^{(N)}\ii_{\bk}^{(N)}(\bx)}{\Delta^{(N)}x_i}&=\frac{1}{x_i-x_{i-1}} \left(\ii_{\bk_{i-1}^\wedge}^{(N)}(\bx_{i-1}^\wedge) - \ii_{\bk_{i-1}^\wedge}^{(N)}(\bx_i^\wedge) \right) + \frac{1}{x_i} \left(\ii_{\bk_{i-1}^\wedge}^{(N)}(\bx_i^\wedge) - \ii_{\bk_i^\downarrow}^{(N)}(\bx)\right)\\
&\quad+\frac{1}{N}\frac{1}{x_i(x_i-x_{i-1})}\left(\ii_{(\bk^{\downarrow}_i)^{\wedge}_{i-1}}^{(N)}(\bx^{\wedge}_i)-\ii_{(\bk^{\downarrow}_i)^{\wedge}_{i-1}}^{(N)}(\bx^{\wedge}_{i-1})\right).
\end{align*}
\end{theorem}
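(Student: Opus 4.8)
The plan is to differentiate $\ii_{\bk}^{(N)}(\bx)$ directly from its definition, exactly as in the proofs of \cref{thm:diff_ii_1,thm:diff_ii_2_1}, carrying out one telescoping and three partial-fraction decompositions. Since replacing $x_i$ by $x_i+N^{-1}$ changes only the factor $1/(n_{i,1}-Nx_i)$ into $1/(n_{i,1}-Nx_i-1)$, the difference quotient becomes
\[
\frac{\Delta^{(N)}\ii_{\bk}^{(N)}(\bx)}{\Delta^{(N)}x_i}=N\sum\frac{1}{n_{i-1,1}-Nx_{i-1}}\left(\frac{1}{n_{i,1}-1-Nx_i}-\frac{1}{n_{i,1}-Nx_i}\right)\frac{1}{n_{i,2}\cdots n_{i,k_i}}\times[\text{rest}],
\]
where $[\text{rest}]$ collects the factors of the blocks other than $i-1$ and $i$. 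The two features that distinguish this case from \cref{thm:diff_ii_2_1} are that, because $k_{i-1}=1$, the single variable $a\coloneqq n_{i-1,1}$ of block $i-1$ carries the \emph{shifted} factor $1/(a-Nx_{i-1})$ rather than $1/a$, and that, because $k_i>1$, the variable $n_{i,2}$ is present and serves as the upper end of the telescoping.

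First I would fix every variable except $a=n_{i-1,1}$ and $b=n_{i,1}$ and telescope over $b\in\{a+1,\dots,n_{i,2}\}$, obtaining $1/(a-Nx_i)-1/(n_{i,2}-Nx_i)$. Since this bracket vanishes at $a=n_{i,2}$, I may harmlessly enlarge the range of $a$ to $P<a\le n_{i,2}$, where $P\coloneqq n_{i-2,k_{i-2}}$ (read as $0$ when $i=2$); this bookkeeping device makes the later identifications with $\ii^{(N)}$-values match ranges cleanly. I then split the sum into the part from $1/(a-Nx_i)$ (Piece A) and the part from $-1/(n_{i,2}-Nx_i)$ (Piece B). For Piece A the single-variable identity $\frac{N}{(a-Nx_{i-1})(a-Nx_i)}=\frac{1}{x_i-x_{i-1}}\left(\frac{1}{a-Nx_i}-\frac{1}{a-Nx_{i-1}}\right)$ reassembles, over $P<a\le n_{i,2}$, into $\frac{1}{x_i-x_{i-1}}\left(\ii_{\bk^{\wedge}_{i-1}}^{(N)}(\bx^{\wedge}_{i-1})-\ii_{\bk^{\wedge}_{i-1}}^{(N)}(\bx^{\wedge}_i)\right)$, according to whether the freed variable $a$ is read as the first letter of the merged $k_i$-block attached to $x_i$ (leaving $\bx^{\wedge}_{i-1}$) or to $x_{i-1}$ (leaving $\bx^{\wedge}_i$).

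For Piece B I would apply $\frac{N}{n_{i,2}(n_{i,2}-Nx_i)}=\frac{1}{x_i}\left(\frac{1}{n_{i,2}-Nx_i}-\frac{1}{n_{i,2}}\right)$. The $-1/n_{i,2}$ summand gives at once $\frac{1}{x_i}\ii_{\bk^{\wedge}_{i-1}}^{(N)}(\bx^{\wedge}_i)$, while the $1/(n_{i,2}-Nx_i)$ summand, over $P<a\le n_{i,2}$, splits into the strict part $a<n_{i,2}$, which is precisely $-\frac{1}{x_i}\ii_{\bk^{\downarrow}_i}^{(N)}(\bx)$, and the boundary term $a=n_{i,2}$. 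This boundary term is where the only genuinely new phenomenon appears: there $n_{i,2}$ simultaneously carries $1/(n_{i,2}-Nx_{i-1})$ and $1/(n_{i,2}-Nx_i)$, and the final partial fraction $\frac{1}{(n_{i,2}-Nx_{i-1})(n_{i,2}-Nx_i)}=\frac{1}{N(x_i-x_{i-1})}\left(\frac{1}{n_{i,2}-Nx_i}-\frac{1}{n_{i,2}-Nx_{i-1}}\right)$ converts it into the $1/N$-correction $\frac{1}{N}\frac{1}{x_i(x_i-x_{i-1})}\left(\ii^{(N)}_{(\bk^{\downarrow}_i)^{\wedge}_{i-1}}(\bx^{\wedge}_i)-\ii^{(N)}_{(\bk^{\downarrow}_i)^{\wedge}_{i-1}}(\bx^{\wedge}_{i-1})\right)$. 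Collecting Piece A and the three contributions of Piece B reproduces the asserted formula; the case $i=r$ is identical, the only change being that the upper neighbour $n_{i+1,1}$ is replaced by the bound $N$, which does not enter the local manipulation.

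The step I expect to be the main obstacle is the careful accounting of the endpoint $a=n_{i,2}$: because $k_{i-1}=1$ forces a shifted factor on block $i-1$, the cancellation of the diagonal that occurs in \cref{thm:diff_ii_2_1} no longer takes place, and one must track the strict-versus-nonstrict inequalities together with the relabelings $\bk^{\wedge}_{i-1}$, $\bk^{\downarrow}_i$, $(\bk^{\downarrow}_i)^{\wedge}_{i-1}$ and $\bx^{\wedge}_{i-1}$, $\bx^{\wedge}_i$ precisely enough to recognise that the surviving diagonal contribution is exactly the $O(N^{-1})$ term in the statement.
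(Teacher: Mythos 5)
Your proposal is correct and follows essentially the same route as the paper's proof: telescope over $n_{i,1}$ up to $n_{i,2}$, apply the partial fractions $\frac{N}{(a-Nx_{i-1})(a-Nx_i)}=\frac{1}{x_i-x_{i-1}}(\frac{1}{a-Nx_i}-\frac{1}{a-Nx_{i-1}})$ and $\frac{N}{n_{i,2}(n_{i,2}-Nx_i)}=\frac{1}{x_i}(\frac{1}{n_{i,2}-Nx_i}-\frac{1}{n_{i,2}})$, and recognise the surviving diagonal $a=n_{i,2}$ as the $O(N^{-1})$ term via $\frac{1}{(n_{i,2}-Nx_{i-1})(n_{i,2}-Nx_i)}=\frac{1}{N(x_i-x_{i-1})}(\frac{1}{n_{i,2}-Nx_i}-\frac{1}{n_{i,2}-Nx_{i-1}})$. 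The only (immaterial) difference is bookkeeping: you enlarge the range to $a\le n_{i,2}$ at the outset using the vanishing of the telescoped bracket, whereas the paper keeps $a<n_{i,2}$ and accounts for the diagonal of the three nonstrict pieces at the end; the resulting correction is identical.
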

\begin{proof}
This follows from
\begin{align*}
&N\sum_{n_{i-2,k_{i-2}}<n_{i-1,1}<n_{i,1}\leq n_{i,2}}\frac{1}{n_{i-1,1}-Nx_{i-1}}\left(\frac{1}{n_{i,1}-1-Nx_i}-\frac{1}{n_{i,1}-Nx_i}\right)\frac{1}{n_{i,2}}\\
&=N\sum_{n_{i-2,k_{i-2}}<n_{i-1,1}<n_{i,2}}\frac{1}{n_{i-1,1}-Nx_{i-1}}\left(\frac{1}{n_{i-1,1}-Nx_i}-\frac{1}{n_{i,2}-Nx_i}\right)\frac{1}{n_{i,2}}\\
&=\sum_{n_{i-2,k_{i-2}}<n_{i-1,1}<n_{i,2}}\left[\frac{1}{x_i-x_{i-1}}\left(\frac{1}{n_{i-1,1}-Nx_i}-\frac{1}{n_{i-1,1}-Nx_{i-1}}\right)\frac{1}{n_{i,2}}\right.\\
&\qquad\qquad\left.+\frac{1}{x_i}\cdot\frac{1}{n_{i-1,1}-Nx_{i-1}}\left(\frac{1}{n_{i,2}}-\frac{1}{n_{i,2}-Nx_i}\right)\right]
\end{align*}
and
\begin{align*}
&\frac{1}{x_i-x_{i-1}}\left(\frac{1}{n_{i,2}-Nx_i}-\frac{1}{n_{i,2}-Nx_{i-1}}\right)\frac{1}{n_{i,2}}+\frac{1}{x_i}\cdot\frac{1}{n_{i,2}-Nx_{i-1}}\cdot\frac{1}{n_{i,2}}\\
&=\frac{1}{N}\frac{1}{x_i(x_i-x_{i-1})}\left(\frac{1}{n_{i,2}-Nx_i}-\frac{1}{n_{i,2}-Nx_{i-1}}\right).\qedhere
\end{align*}
\end{proof}
\begin{theorem}\label{thm:diff_ii_2_4}
We assume that $r>1$.
When $1<i<r$ and $k_{i-1}=k_i=1$,
\begin{align*}
\frac{\Delta^{(N)} \ii_{\bk}^{(N)}(\bx)}{\Delta^{(N)} x_i} &= \frac{1}{x_i - x_{i-1}} \left(\ii_{\bk_{i-1}^\wedge}^{(N)}(\bx_{i-1}^\wedge) - \ii_{\bk_i^\wedge}^{(N)} (\bx_i^\wedge)\right) \\
&\quad+ \frac{1}{x_i+N^{-1}-x_{i+1}} \left(\ii_{\bk_i^\wedge}^{(N)}(\bx_i^\wedge) - \ii_{\bk_i^\wedge}^{(N)} (\bx_{i+1}^\wedge)\bigg|_{x_i+N^{-1}} \right);
\end{align*}
when $k_{r-1}=k_r=1$,
\[
\frac{\Delta^{(N)} \ii_{\bk}^{(N)}(\bx)}{\Delta^{(N)} x_r} = \frac{1}{x_r - x_{r-1}} \left(\ii_{\bk_{r-1}^\wedge}^{(N)}(\bx_{r-1}^\wedge) - \ii_{\bk_r^\wedge}^{(N)} (\bx_r^\wedge)\right)+\frac{1}{x_r+N^{-1}-1}\ii_{\bk^{\wedge}_r}^{(N)}(\bx^{\wedge}_r).
\]
\end{theorem}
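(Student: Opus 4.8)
The plan is to compute the difference quotient directly from the definition of $\ii_{\bk}^{(N)}(\bx)$, exploiting the two simplifications coming from $k_{i-1}=k_i=1$: the variable $x_i$ enters the summand only through the single factor $\frac{1}{n_{i,1}-Nx_i}$, and block $i-1$ reduces to the single variable $n_{i-1,1}$ (so $n_{i-1,k_{i-1}}=n_{i-1,1}$). Since $\frac{\Delta^{(N)}}{\Delta^{(N)}x_i}$ amounts to multiplying by $N$ and replacing $x_i$ by $x_i+N^{-1}$, I would first write
\[
\frac{\Delta^{(N)}\ii_{\bk}^{(N)}(\bx)}{\Delta^{(N)}x_i}=N\sum\frac{1}{n_{i-1,1}-Nx_{i-1}}\left(\frac{1}{n_{i,1}-1-Nx_i}-\frac{1}{n_{i,1}-Nx_i}\right)\cdot[\text{other blocks}],
\]
the summation being over all index variables with the usual constraints, and then freeze every variable except $n_{i,1}$, which ranges over $n_{i-1,1}<n_{i,1}<n_{i+1,1}$ (or $n_{r-1,1}<n_{r,1}<N$ when $i=r$).

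Second, I would telescope the inner sum over $n_{i,1}$. Writing the bracket as $g(n_{i,1})-g(n_{i,1}+1)$ with $g(m)=\frac{1}{m-1-Nx_i}$, the sum over the open range collapses to $\frac{1}{n_{i-1,1}-Nx_i}-\frac{1}{n_{i+1,1}-1-Nx_i}$. Restoring the factor $N$ and the block $i-1$ contribution $\frac{1}{n_{i-1,1}-Nx_{i-1}}$ splits the difference quotient into two groups, which I will call Term A and Term B, corresponding to the two fractions produced by telescoping.

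Third, for Term A, namely $\frac{N}{(n_{i-1,1}-Nx_{i-1})(n_{i-1,1}-Nx_i)}$, a partial-fraction decomposition in $n_{i-1,1}$ gives $\frac{1}{x_i-x_{i-1}}\left(\frac{1}{n_{i-1,1}-Nx_i}-\frac{1}{n_{i-1,1}-Nx_{i-1}}\right)$. Summing over the remaining variables, the first fraction (with $x_i$ attached to $n_{i-1,1}$ and both neighbouring inequalities strict) is termwise exactly $\ii_{\bk_{i-1}^\wedge}^{(N)}(\bx_{i-1}^\wedge)$, and the second is $\ii_{\bk_i^\wedge}^{(N)}(\bx_i^\wedge)$, yielding the first line of the assertion. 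For Term B, $-\frac{N}{(n_{i-1,1}-Nx_{i-1})(n_{i+1,1}-1-Nx_i)}$, I would absorb the leading factor $\frac{1}{n_{i+1,1}-Nx_{i+1}}$ of block $i+1$ and apply partial fractions in $n_{i+1,1}$, using $n_{i+1,1}-1-Nx_i=n_{i+1,1}-N(x_i+N^{-1})$; this produces $\frac{1}{x_i+N^{-1}-x_{i+1}}\left(\ii_{\bk_i^\wedge}^{(N)}(\bx_i^\wedge)-\ii_{\bk_i^\wedge}^{(N)}(\bx_{i+1}^\wedge)\big|_{x_i+N^{-1}}\right)$, the second line. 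In the case $i=r$ the telescoping boundary term is $-\frac{N}{N-1-Nx_r}=\frac{1}{x_r+N^{-1}-1}$, which is constant in the remaining variables and thus factors out $\ii_{\bk_r^\wedge}^{(N)}(\bx_r^\wedge)$, matching the stated $i=r$ formula.

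The main obstacle is the bookkeeping of the $\wedge$-contractions together with the $\big|_{x_i+N^{-1}}$ shift: after each partial-fraction step one must check that every surviving summand, with its dummy variable, its constraints, and the $x$-value attached to it, coincides termwise with the correctly reindexed discrete integral. In contrast to \cref{thm:diff_ii_2_2} and \cref{thm:diff_ii_2_3}, no $N^{-1}$ correction term appears here, precisely because $k_{i-1}=1$ leaves block $i-1$ with no internal $\le$-inequality that could, upon reindexing, degenerate into a boundary contribution.
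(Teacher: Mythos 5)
Your proposal is correct and follows essentially the same route as the paper's proof: telescope the inner sum over $n_{i,1}$, then apply partial fractions in $n_{i-1,1}$ and in $n_{i+1,1}$ (using $n_{i+1,1}-1-Nx_i=n_{i+1,1}-N(x_i+N^{-1})$), with the $i=r$ boundary term $\frac{1}{x_r+N^{-1}-1}$ factoring out as you describe. One small quibble with your closing aside: the absence of an $N^{-1}$ correction requires both $k_{i-1}=1$ and $k_i=1$, not just $k_{i-1}=1$ --- in \cref{thm:diff_ii_2_3} one has $k_{i-1}=1$ yet a correction still arises from the internal variable $n_{i,2}$ of block $i$ --- but this remark is not part of the argument and does not affect its validity.
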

\begin{proof}
This follows from
\begin{align*}
&N\sum_{n_{i-2,k_{i-2}}<n_{i-1,1}<n_{i,1}<n_{i+1,1}}\frac{1}{n_{i-1,1}-Nx_{i-1}}\\
&\qquad\qquad\times\left(\frac{1}{n_{i,1}-1-Nx_i}-\frac{1}{n_{i,1}-Nx_i}\right)\frac{1}{n_{i+1,1}-Nx_{i+1}}\\
&=N\sum_{n_{i-2,k_{i-2}}<n_{i-1,1}<n_{i+1,1}}\frac{1}{n_{i-1,1}-Nx_{i-1}}\\
&\qquad\qquad\times\left(\frac{1}{n_{i-1,1}-Nx_i}-\frac{1}{n_{i+1,1}-N(x_i+N^{-1})}\right)\frac{1}{n_{i+1,1}-Nx_{i+1}}\\
&=\sum_{n_{i-2,k_{i-2}}<n_{i-1,1}<n_{i+1,1}}\left[\frac{1}{x_i-x_{i-1}}\left(\frac{1}{n_{i-1,1}-Nx_i}-\frac{1}{n_{i-1,1}-Nx_{i-1}}\right)\frac{1}{n_{i+1,1}-Nx_{i+1}}\right.\\
&\qquad+\left.\frac{1}{x_i+N^{-1}-x_{i+1}}\cdot\frac{1}{n_{i-1,1}-Nx_{i-1}}\left(\frac{1}{n_{i+1,1}-Nx_{i+1}}-\frac{1}{n_{i+1,1}-N(x_i+N^{-1})}\right)\right]
\end{align*}
for the case $i<r$.
The case $i=r$ is similar.
\end{proof}
% --------------------------------------------------------------------------
\subsection{Proof of \cref{thm:main} by using the difference equations}
% --------------------------------------------------------------------------
We use the difference equations with respect to $x_1$.
\begin{proof}[An alternating proof of \cref{thm:main}]
We prove \eqref{eq:main} by induction on $\wt(\bk)$.
By \cref{thm:diff_Li_1,thm:diff_ii_1}, and the induction hypothesis, we have
\[
\frac{\Delta^{(N)}\widetilde{\Li}_{\bk}^{\sh,(N)}(\bx)}{\Delta^{(N)}x_1}=(-1)^{\dep(\bk)}\cdot\frac{\Delta^{(N)}\ii_{\bk}^{(N)}(\bx)}{\Delta^{(N)}x_1}.
\]
For the case where $\bk=(1)$, it holds without any assumptions.
In particular, by setting $F(x_1)\coloneqq\widetilde{\Li}_{\bk}^{\sh,(N)}(\bx)-(-1)^{\dep(\bk)}\ii_{\bk}^{(N)}(\bx)$, we see that $F(x_1+N^{-1})=F(x_1)$ holds.
By definition, one can decompose $F(x_1)$ as
\[
F(x_1)=\sum_{n=1}^{N-1}\frac{C_n(x_2,\dots,x_r)}{Nx_1-n}.
\]
If there exists $n$ such that $C_n\neq 0$, then $F(x_1)$ has a pole at $x_1=0$ by $F(x_1+n/N)=F(x_1)$.
However, it is impossible by the definition of $\widetilde{\Li}_{\bk}^{\sh,(N)}(\bx)$ and $\ii_{\bk}^{(N)}(\bx)$.
Hence we have all $C_n=0$ and $F(x_1)=0$.
\end{proof}
% --------------------------------------------------------------------------
\section{Duality for multiple polylogarithms}\label{sec:duality}
% --------------------------------------------------------------------------
In \cite{MaesakaSekiWatanabe-pre}, a new proof of the duality for MZVs is provided through the manipulation of finite sums as an application of \cref{thm:MSW}.
Here, we present a similar proof of the duality for MPLs as an application of \cref{thm:main}.
% --------------------------------------------------------------------------
\subsection{Notation and the statement}
% --------------------------------------------------------------------------
Any admissible index $\bk \neq \emp$ is uniquely expressed as $\bk = (\{1\}^{a_1-1}, b_1+1, \ldots, \{1\}^{a_h-1}, b_h+1)$, where $h, a_1, \ldots, a_h, b_1, \ldots, b_h$ are positive integers.
Then, its \emph{dual index} $\bk^{\dagger}$ is defined as $\bk^\dagger \coloneqq (\{1\}^{b_h-1}, a_h+1, \ldots, \{1\}^{b_1-1}, a_1+1)$.
Here, we also consider the empty index $\emp$ as an admissible index, and set $\emp^{\dagger}\coloneqq\emp$, $\dep(\emp)\coloneqq0$.
The symbol $\BB$ denotes $\{z\in\CC \mid |z|\geq 1, |1-z|\geq 1\}\cup\{1\}$.
We say that a pair $\left({\bz \atop \bk}\right)$, consisting of an index $\bk\neq\emp$ with $\dep(\bk)=r$ and a tuple of complex numbers $\bz=(z_1,\dots,z_r)$, satisfies the \emph{dual condition} if $z_i\in\BB$ for all $1\leq i\leq r$, and additionally, if $\bk$ is non-admissible, then it is required that $z_r\neq 1$. A pair $\left({\bz \atop \bk}\right)$ satisfying the dual condition is uniquely expressed as
\[
\left({\bz\atop\bk}\right)=\left({\{1\}^{r_1},\atop\bl_1,}{\{1\}^{a_1-1},\atop\{1\}^{a_1-1},}{w_1\atop b_1}{,\dots,\atop,\dots,}{\{1\}^{r_d},\atop\bl_d,}{\{1\}^{a_d-1},\atop\{1\}^{a_d-1},}{w_d,\atop b_d,}{\{1\}^{r_{d+1}}\atop\bl_{d+1}}\right),
\]
where $d$ is a non-negative integer, $a_1, \ldots, a_d, b_1, \ldots, b_d$ are positive integers, all $w_1, \ldots, w_d$ are not $1$, $\bl_1, \ldots, \bl_{d+1}$ are admissible indices, and $r_j \coloneqq \mathrm{dep}(\bl_j)$ for $1 \le j \le d+1$.
Then its dual pair $\left({\bz \atop \bk} \right)^\dagger$ is defined by
\[
\left({\bz\atop\bk}\right)^{\dagger}=\left({\{1\}^{s_{d+1}},\atop(\bl_{d+1})^{\dagger},}{\{1\}^{b_d-1},\atop\{1\}^{b_d-1},}{1-w_d,\atop a_d,}{\{1\}^{s_d}\atop(\bl_d)^{\dagger}}{,\dots,\atop,\dots,}{\{1\}^{b_1-1},\atop\{1\}^{b_1-1},}{1-w_1,\atop a_1,}{\{1\}^{s_{1}}\atop(\bl_{1})^{\dagger}}\right),
\]
where $s_j \coloneqq \mathrm{dep}(\bl_j^\dagger)$ for $1 \le j \le d+1$.
For $\bz$, set $\iota(\bz)\coloneqq d$.
\begin{theorem}[Duality for MPLs {\cite[Section~6.1]{BorweinEtal2001}, \cite[Theorem~3.4]{KawamuraMaesakaSeki2022}}]\label{thm:duality_for_MPL}
Let $\left({\bz \atop \bk}\right)$ be a pair satisfying the dual condition.
Write $\left({\bz \atop \bk}\right)^{\dagger}$ as $\left({\bz' \atop \bk'}\right)$.
Then we have
\[
\Li_{\bk}^{\sh}(\bz)=(-1)^{\iota(\bz)}\Li_{\bk'}^{\sh}(\bz').
\]
\end{theorem}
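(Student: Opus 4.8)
The plan is to deduce the duality from the discretized identity \cref{thm:main} by exploiting the reflection symmetry $n\mapsto N-n$ of the discrete iterated integral $\ii_{\bk}^{(N)}$, which is the finite-sum shadow of the classical change of variables $t\mapsto 1-t$ in the iterated integral. First I would use \cref{prop:asymptotic_for_modifiedMPL} to write $\Li_{\bk}^{\sh}(\bz)=\lim_{N\to\infty}\widetilde{\Li}_{\bk}^{\sh,(N)}(\bz)$ and likewise for the dual pair. The hypothesis $z_i\in\BB$ is exactly what guarantees that both $\bz$ and its dual values (the numbers $1-w_j$ together with $1$'s) satisfy $|z_i|\ge 1$, so that the proposition applies on both sides, while the dual condition forbids the divergent case $(k_r,z_r)=(1,1)$ for either pair. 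Then \cref{thm:main} converts each side into a discrete iterated integral, reducing the theorem to a statement purely about $\ii_{\bk}^{(N)}(\bz)$ and $\ii_{\bk'}^{(N)}(\bz')$.

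Next I would read $\ii_{\bk}^{(N)}(\bz)$ as a single sum over a chain $0<m_1\le\cdots\le m_w<N$ of length $w=\wt(\bk)$, weakly increasing inside each block and strictly increasing at block boundaries, where each variable carries a factor $\frac{1}{m_\ell-Na_\ell}$ with $a_\ell\in\{0,z_1,\dots,z_r\}$ spelled out by the iterated-integral word of $(\bk,\bz)$. Applying the involution $m_\ell\mapsto N-m_{w+1-\ell}$ reverses the chain and sends each factor $\frac{1}{m-Na}$ to $-\frac{1}{m-N(1-a)}$; thus the word is reversed, each parameter $a$ is replaced by $1-a$ (so $0\leftrightarrow 1$ and $w_j\leftrightarrow 1-w_j$), and an overall sign $(-1)^w$ is produced. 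By the definition of the dual pair this reversed, parameter-swapped word is precisely the word of $(\bk',\bz')$, with the canonical decomposition matching $\bl_j\leftrightarrow(\bl_j)^{\dagger}$ and $\{1\}^{a_j-1}\leftrightarrow\{1\}^{b_j-1}$.

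The main obstacle is that the reflection does \emph{not} yield an exact finite-$N$ identity $\ii_{\bk}^{(N)}(\bz)=(-1)^{w}\ii_{\bk'}^{(N)}(\bz')$: the discretization uses weak inequalities inside blocks and strict inequalities between blocks, and this pattern is not symmetric under reversal. Concretely, the reversed chain carries weak inequalities at some positions where the dual word demands strict ones, so the reflected sum and $(-1)^{w}\ii_{\bk'}^{(N)}(\bz')$ differ by ``diagonal'' contributions supported on coincidences $m_\ell=m_{\ell+1}$ of consecutive variables (already for $\bk=(3)$ versus $\bk^{\dagger}=(1,2)$ such a term appears at finite $N$). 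The crux is therefore to show that each diagonal term is a lower-dimensional Riemann sum that tends to $0$ as $N\to\infty$; I expect this to follow from an Abel-summation estimate of the same flavour as the one in \cref{prop:asymptotic_for_modifiedMPL}, the extra denominator coming from the collapsed variable supplying the required decay.

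Finally I would assemble the signs. Combining $\widetilde{\Li}_{\bk}^{\sh,(N)}(\bz)=(-1)^{\dep(\bk)}\ii_{\bk}^{(N)}(\bz)$, the reflection sign $(-1)^{w}$ surviving in the limit, and $\ii_{\bk'}^{(N)}(\bz')=(-1)^{\dep(\bk')}\widetilde{\Li}_{\bk'}^{\sh,(N)}(\bz')$, the total sign is $(-1)^{\dep(\bk)+\wt(\bk)+\dep(\bk')}$. Here I would invoke the combinatorial identity $\dep(\bk)+\dep(\bk')=\wt(\bk)+\iota(\bz)$, which follows from $\dep(\bl_j)+\dep((\bl_j)^{\dagger})=\wt(\bl_j)$ applied to each admissible block of the canonical decomposition, to reduce this exponent to $\iota(\bz)$ modulo $2$. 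Passing to the limit then gives $\Li_{\bk}^{\sh}(\bz)=(-1)^{\iota(\bz)}\Li_{\bk'}^{\sh}(\bz')$.
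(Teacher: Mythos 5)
Your proposal follows essentially the same route as the paper: reduce to the discrete iterated integrals via \cref{prop:asymptotic_for_modifiedMPL} and \cref{thm:main}, apply the reflection $n\mapsto N-n$ to obtain the asymptotic duality $\ii_{\bk'}^{(N)}(\bz')=(-1)^{\wt(\bk)}\ii_{\bk}^{(N)}(\bz)+O(N^{-1}\log^{\wt(\bk)}N)$ (the paper's \cref{thm:asymp_duality}), with the weak/strict mismatch producing exactly the diagonal error terms the paper controls via the $R_{<N}$-values of \cref{lem:error}, and assemble the sign using $\wt(\bk)=\dep(\bk)+\dep(\bk')-\iota(\bz)$. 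The argument is correct and matches the paper's proof in structure and in all key points.
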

In \cite{BorweinEtal2001}, this theorem was proved using the iterated integral expression \eqref{eq:IIEforMPL}, but a proof via series manipulation was considered to be difficult.
In contrast, \cite{KawamuraMaesakaSeki2022} successfully provided an alternative proof through the manipulation of infinite series\footnote{Note that the definitions of multiple  polylogarithms are slightly different; their $\Li_{\bk}^{\sh}(z_1,\dots,z_r)$ corresponds to our $\Li_{\bk}^{\sh}(z_1^{-1},\dots,z_r^{-1})$.}.
Following \cite{MaesakaSekiWatanabe-pre}, we present a proof by manipulating finite sums as an application of our main theorem.
While \cite{KawamuraMaesakaSeki2022} excludes the case of conditional convergence, here, we include and discuss that case as well.
% --------------------------------------------------------------------------
\subsection{Error estimates}
% --------------------------------------------------------------------------
We prepare a lemma on the necessary error estimates, including those used in \cref{sec:EDSR}.
\begin{definition}\label{def:R-values}
For positive integers $N$, $k$, non-negative integers $a_1, b_1, \ldots, a_k, b_k$ satisfying all $a_i+b_i\geq 1$, and a tuple of complex numbers
\[
\bz=(z_{1,1},\dots,z_{1,a_1}, z_{2,1},\dots, z_{2,a_2}, \dots,z_{k,1},\dots,z_{k,a_k})\in\CC^{a_1+\cdots+a_k}
\]
satisfying all $|z_{i,j}|\geq 1$, we set
\begin{align*}
R_{<N}(a_1,\dots,a_k;b_1,\dots, b_k)&\coloneqq\sum_{0 < n_1 < \cdots < n_k < N} \prod_{i=1}^k \frac{1}{(N-n_i)^{a_i} n_i^{b_i}}, \\
R_{<N}^{(\bz)}(a_1,\dots,a_k;b_1,\dots, b_k)&\coloneqq\sum_{0<n_1<\cdots<n_k<N}\prod_{i=1}^k\frac{1}{(n_i-Nz_{i,1})\cdots (n_i-Nz_{i,a_i})n_i^{b_i}}.
\end{align*}
\end{definition}
\begin{lemma}\label{lem:error}
In the setting of \cref{def:R-values}, we have
\[
R_{<N}^{(\bz)}(a_1,\dots,a_k;b_1,\dots, b_k)=O(\log^k N)
\]
as $N\to\infty$.
Furthermore, assuming that $a_1\geq 1$, we assume that one of the following three conditions is satisfied$:$
\begin{itemize}
\item there exists at least one $1\leq i\leq k$ satisfying $a_i+b_i\geq 2$ and $b_i\geq 1$.
\item there exist $1\leq i<j\leq k$ satisfying $a_i\geq 2$ and $b_j\geq 1$.
\item there exist $1\leq i\leq j\leq k$, $1\leq l\leq a_j$ satisfying $a_i\geq 2$ and $z_{j,l}\neq 1$.
\end{itemize}
Then we have
\[
R_{<N}^{(\bz)}(a_1,\dots,a_k;b_1,\dots, b_k)=O(N^{-1}\log^k N)
\]
as $N\to\infty$.
The implied constant in Landau's notation depends on $z_{j,l}$ under the third condition.
\end{lemma}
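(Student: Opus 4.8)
The plan is to reduce the complex sum to a positive real sum, bound that product-wise for the first estimate, and then, for the sharper bound, isolate in each of the three cases a single factor of size $N^{-1}$ while controlling the remainder by the first estimate. First I would eliminate the complex parameters: since $|z_{i,j}|\ge 1$ and $0<n_i<N$, we have $|n_i-Nz_{i,j}|\ge N|z_{i,j}|-n_i\ge N-n_i\ge 1$, so termwise
\[
\left|R_{<N}^{(\bz)}(a_1,\dots,a_k;b_1,\dots,b_k)\right|\le R_{<N}(a_1,\dots,a_k;b_1,\dots,b_k).
\]
All summands on the right are positive, so discarding the ordering $n_1<\cdots<n_k$ only increases the sum and factorizes it as $\prod_{i=1}^k T_i$ with $T_i\coloneqq\sum_{n=1}^{N-1}(N-n)^{-a_i}n^{-b_i}$. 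Each $T_i$ is $O(\log N)$: if $b_i\ge 1$ then $T_i\le\sum_{n=1}^{N-1}n^{-1}$, while if $b_i=0$ (so $a_i\ge 1$) then $T_i=\sum_{m=1}^{N-1}m^{-a_i}\le\sum_{m=1}^{N-1}m^{-1}$. This proves the first assertion, with no use of $a_1\ge 1$.

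For the sharper estimate I would record three elementary facts. (a) For $a,b\ge 1$, the identity $\frac{1}{(N-n)n}=\frac1N\bigl(\frac1{N-n}+\frac1n\bigr)$ (and its evident generalization, splitting the sum at $n=N/2$) gives $\sum_{n=1}^{N-1}(N-n)^{-a}n^{-b}=O(N^{-1}\log N)$, the extracted $N^{-1}$ leaving in each term a slot whose exponents still sum to at least $1$. (b) For $a\ge 2$ the tail bound $\sum_{n<n'}(N-n)^{-a}=\sum_{m>N-n'}m^{-a}=O((N-n')^{-1})$ converts $(N-n_i)$-decay with $a_i\ge 2$ into $(N-n_j)^{-1}$-decay for any later variable $n_j$, using $(N-n_{i+1})^{-1}\le(N-n_j)^{-1}$. (c) For fixed $z$ with $|z|\ge 1$ and $z\ne 1$, writing $n-Nz=N(n/N-z)$ shows $|n-Nz|\ge c(z)N$ with $c(z)\coloneqq\inf_{t\in[0,1]}|t-z|>0$, because $|z|\ge 1$ and $z\ne 1$ force $z\notin[0,1]$; such a factor is thus $O(N^{-1})$ uniformly in $n$, which explains the dependence of the implied constant on $z_{j,l}$.

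With these in hand I would treat the three cases, in each extracting one factor of size $N^{-1}$ and bounding the residual sum by the first assertion (whose hypotheses remain valid, the exponents still summing to at least $1$ in every slot). In the first case, if the distinguished index $i$ has $a_i\ge 1$ then slot $i$ alone yields $N^{-1}$ by (a); if instead $a_i=0$ and $b_i\ge 2$, then $i\ne 1$ because $a_1\ge 1$, and I pair $(N-n_1)^{-1}$ (from $a_1\ge 1$) with $n_i^{-2}$, using $n_1<n_i$ and splitting according to whether $n_i\ge N/2$. In the second case I sum the $a_i\ge 2$ slot over $n_i$ via (b) to create $(N-n_j)^{-1}$, which combines with $n_j^{-b_j}$, $b_j\ge 1$, to give $N^{-1}\log N$. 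In the third case I bound $(n_j-Nz_{j,l})^{-1}$ by $c(z_{j,l})^{-1}N^{-1}$ via (c) and use $a_i\ge 2$ to secure summability in the $n_j$-direction: when $i=j$ one of the remaining $a_j-1\ge 1$ error factors keeps the $n_j$-sum at $O(\log N)$, and when $i<j$ the decay is transported to $(N-n_j)^{-1}$ as in (b). In every case the remaining slots contribute $O(\log^k N)$ in total by the first assertion, yielding $O(N^{-1}\log^k N)$.

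The main obstacle is purely organizational: the transport of decay across the ordering in the first case (sub-case $a_i=0$) and in the third case with $i<j$. One must sum out the variables lying strictly between the two distinguished indices, verifying that each contributes only an $O(\log N)$ factor and, crucially, that pulling out one unit of decay never leaves a slot empty with its variable then ranging freely over an interval of length comparable to $N$ (which would destroy the gain). Once the reduced sum is recognized to fall again under the hypotheses of the first assertion, both estimates follow.
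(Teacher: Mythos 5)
Your proposal is correct, and it takes a genuinely different (and more self-contained) route than the paper. The paper's own proof handles the first assertion and the first two conditions by the same reduction $|R_{<N}^{(\bz)}|\le R_{<N}$ and then simply cites known estimates for the positive real sums ([MSW, Lemma~2.1] and [S2, Lemma~2.2]); for the third condition it works out only the representative case $R_{<N}^{(z_1,z_2,w)}(2,1;0,0)$, reversing the summation variables $n\mapsto N-n$ and extracting the factor $N^{-1}$ through the partial-fraction inequality $\frac{1}{|N(1-w)-m'|\,m'}\le\frac{1}{N|1-w|}\bigl(\frac{1}{|N(1-w)-m'|}+\frac{1}{m'}\bigr)$, asserting that the general case is identical. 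You instead prove everything from scratch: the first assertion by factorizing the unordered majorant, and the refined bound by a uniform ``extract one $N^{-1}$, control the rest by the crude bound'' scheme. Your step (c) — bounding $|n_j-Nz_{j,l}|\ge N\cdot\mathrm{dist}(z_{j,l},[0,1])$ directly, which is positive precisely because $|z_{j,l}|\ge1$ and $z_{j,l}\ne1$ force $z_{j,l}\notin[0,1]$ — is a cleaner and more transparent way to exploit $z_{j,l}\ne1$ than the paper's telescoping manipulation, and it yields the same $z_{j,l}$-dependence of the implied constant. The bookkeeping you flag (transporting decay across intermediate slots in the sub-case $a_i=0$, $b_i\ge2$ of the first condition and in the case $i<j$ of the third) is real but resolvable exactly as you describe: after relaxing the ordering to retain only the chain linking the two distinguished indices, the linear factor produced by an emptied slot is always absorbed by the $n_i^{-2}$ (resp.\ the transported $(N-n_j)^{-1}$), so no variable is ever left ranging freely without compensating decay. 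What the paper's approach buys is brevity at the cost of external references and a proof-by-representative-example; what yours buys is a complete, elementary, and uniform argument.
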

\begin{proof}
Since
\[
|R_{<N}^{(\bz)}(a_1,\dots,a_k;b_1,\dots, b_k)|\leq R_{<N}(a_1,\dots,a_k;b_1,\dots, b_k)
\]
holds, except for the case of the last condition, it follows from \cite[Lemma~2.1]{MaesakaSekiWatanabe-pre} and \cite[Lemma~2.2]{Seki-pre}.
To avoid cumbersome notation, we will only prove a simple case where the last condition is satisfied.
Let $z_1$, $z_2$, and $w$ be complex numbers satisfying $|z_1|\geq 1$, $|z_2|\geq 1$, $|w|\geq 1$, and $w\neq 1$.
Then, 
\begin{align*}
|R_{<N}^{(z_1,z_2,w)}(2,1;0,0)|&\leq \sum_{0<n<m<N}\frac{1}{(N-n)^2|Nw-m|}\\
&=\sum_{0<m'<n'<N}\frac{1}{|N(1-w)-m'|(n')^2}\\
&<\sum_{0<m'<n'<N}\frac{1}{|N(1-w)-m'|m'n'}\\
&\leq\frac{1}{N|1-w|}\left(\sum_{0<m'<n'<N}\frac{1}{|N(1-w)-m'|n'}+\sum_{0<m'<n'<N}\frac{1}{m'n'}\right)\\
&=\frac{1}{N|1-w|}\left(\sum_{0<n<m<N}\frac{1}{(N-n)|Nw-m|}+R_{<N}(1,1;0,0)\right)\\
&\leq\frac{2R_{<N}(1,1;0,0)}{N|1-w|}=O_w(N^{-1}\log^2N)
\end{align*}
as $N\to\infty$.
The general case can be proved in exactly the same manner.
\end{proof}
% --------------------------------------------------------------------------
\subsection{Proof of \cref{thm:duality_for_MPL}}
% --------------------------------------------------------------------------
Let $\left({\bz \atop \bk}\right)$ and $\left({\bz' \atop \bk'}\right)$ as in the statement of \cref{thm:duality_for_MPL}.
Note that $\wt(\bk)=\dep(\bk)+\dep(\bk')-\iota(\bz)$.
\begin{theorem}[Asymptotic duality]\label{thm:asymp_duality}
\[
\ii_{\bk'}^{(N)}(\bz')=(-1)^{\wt(\bk)}\ii_{\bk}^{(N)}(\bz)+O(N^{-1}\log^{\wt(\bk)}N)
\]
as $N\to\infty$.
\end{theorem}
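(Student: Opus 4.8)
The plan is to transport the classical duality---which at the level of iterated integrals is nothing but the substitution $t\mapsto 1-t$---down to the finite sums by applying the involution $n\mapsto N-n$ to every summation variable of $\ii_{\bk}^{(N)}(\bz)$. Expanding the blocks, $\ii_{\bk}^{(N)}(\bz)$ is a sum over a chain of $\wt(\bk)$ integers whose summand is a product of $\wt(\bk)$ factors, each of the form $\tfrac{1}{n-N\alpha}$ with $\alpha=z_j$ at the block-leading positions and $\alpha=0$ at the remaining ones. Under $n\mapsto N-n$ each factor transforms as
\[
\frac{1}{(N-n)-N\alpha}=-\frac{1}{n-N(1-\alpha)},
\]
so it simultaneously acquires a sign $-1$ and has its ``letter'' replaced by $1-\alpha$ (hence $0\mapsto1$ and $z_j\mapsto 1-z_j$), while the whole chain of inequalities is reversed. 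Collecting the $\wt(\bk)$ signs gives the global factor $(-1)^{\wt(\bk)}$, and the replacement $\alpha\mapsto1-\alpha$ is exactly the letter substitution $0\leftrightarrow1$, $z_j\mapsto1-z_j$ built into the definition of the dual pair $\left({\bz\atop\bk}\right)^{\dagger}$.

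First I would argue that, once the overall sign $(-1)^{\wt(\bk)}$ is extracted, the reindexed sum coincides with $\ii_{\bk'}^{(N)}(\bz')$ up to the distinction between the strict inequalities $<$ (between blocks) and the non-strict ones $\le$ (within blocks). The involution being a bijection of $\{1,\dots,N-1\}$, the outer constraints $0<\cdots<N$ are preserved, so no boundary terms arise; and since the combinatorial dual is precisely word-reversal composed with $\alpha\mapsto1-\alpha$, the letters of the reindexed sum and their block-leading/continuation roles already match those of $\ii_{\bk'}^{(N)}(\bz')$. The only possible discrepancy is that a gap carrying $\le$ on one side carries $<$ on the other, and at every such gap the two expressions differ by a ``diagonal'' term in which the two adjacent variables are set equal, thereby collapsing one summation index.

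The analytic core is then to bound these diagonal corrections. After collapsing the coincident index, each correction is one of the sums $R_{<N}^{(\cdot)}(\cdots)$ introduced in \cref{def:R-values}, with one fewer variable than $\wt(\bk)$. The merge raises the local degree of the surviving variable, so the collapsed factor acquires either a combination of an $(N-n)$-type pole with an $n$-power (giving some $a_i+b_i\ge2$ with $b_i\ge1$) or two poles at least one of which sits at a genuine point $1-z_j\neq1$ (giving some $a_i\ge2$ with $z_{j,l}\ne1$). In either situation one of the three hypotheses of \cref{lem:error} is met, whence each correction is $O(N^{-1}\log^{\wt(\bk)}N)$. Adding the main term $(-1)^{\wt(\bk)}\ii_{\bk}^{(N)}(\bz)$ to these corrections yields $\ii_{\bk'}^{(N)}(\bz')$, which is the claim.

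I expect the main obstacle to be purely combinatorial: to pin down, across the canonical decomposition into the interleaved pieces $\bl_j$, $\{1\}^{a_j-1}$, $w_j$, exactly which gaps change strictness under reversal, and to check that the diagonal $R$-value produced at each such gap \emph{always} satisfies one of the conditions of \cref{lem:error}---in particular that a configuration of type $a_i=2$, $b_i=0$ with both poles at $1$ (which \cref{lem:error} does not control) never arises in isolation, a genuine point $1-w_j\neq1$ or an $n$-power being present whenever it is needed. Once this bookkeeping is in place, the estimate is immediate from \cref{lem:error}.
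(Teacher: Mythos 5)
Your proposal is correct and follows essentially the same route as the paper: the paper's proof likewise applies the change of variables $n_i\mapsto N-n_{\wt(\bk)+1-i}$, reduces the difference to a sum of the $R_{<N}$-values of \cref{def:R-values}, and invokes \cref{lem:error}. You are in fact more explicit than the paper about the combinatorial bookkeeping, and the one danger you flag --- an isolated collapsed factor $\tfrac{1}{(N-n)^2}$ with no later $n$-power or letter $\neq 1$ --- is precisely what the dual condition (forbidding $k_r=z_r=1$) rules out, so the check you defer does go through.
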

\begin{proof}
After applying the change of variables ``$n_i\mapsto N-n_{\wt(\bk)+1-i}$'' in the definition of $\ii_{\bk'}^{(N)}(\bz')$ (the summation indices are appropriately relabeled), it suffices to decompose the difference into a sum of $R_{<N}$-values and then apply \cref{lem:error}.
\end{proof}
\begin{proof}[Proof of \cref{thm:duality_for_MPL}]
By combining \cref{prop:asymptotic_for_modifiedMPL}, \cref{thm:main,thm:asymp_duality}, we have
\begin{align*}
\Li_{\bk}^{\sh,<N}(\bz)-(-1)^{\iota(\bz)}\Li_{\bk'}^{\sh,<N}(\bz')
&=\widetilde{\Li}_{\bk}^{\sh,(N)}(\bz)-(-1)^{\iota(\bz)}\widetilde{\Li}_{\bk'}^{\sh,(N)}(\bz')+o(1)\\
&=(-1)^{\dep(\bk)}\ii_{\bk}^{(N)}(\bz)-(-1)^{\iota(\bz)+\dep(\bk')}\ii_{\bk'}^{(N)}(\bz')+o(1)\\
&=o(1).
\end{align*}
Therefore, by taking the limit $N\to\infty$, we have the duality.
\end{proof}
% --------------------------------------------------------------------------
\section{Relations for finite multiple zeta values derived from \cref{thm:main}}\label{sec:FMZV}
% --------------------------------------------------------------------------
In \cite{MaesakaSekiWatanabe-pre}, new proofs of both the duality for MZVs and the duality for finite multiple zeta values were provided using \cref{thm:MSW}.
Furthermore, in the previous section, the new proof of the duality for MZVs was extended to a new proof of the duality for MPLs.
Consequently, one might hope that our main result could yield a new proof of the duality for finite multiple polylogarithms (= \cite[Theorem~1.3 (1), Theorem~3.12]{SakugawaSeki2017}).
However, since the left-hand side of \eqref{eq:main} is $\widetilde{\Li}_{\bk}^{\sh,(N)}(\bx)$ rather than $\Li_{\bk}^{\sh,<N}(\bx)$, unfortunately, employing a similar argument to \cite[Section~3]{MaesakaSekiWatanabe-pre} does not yield a relation for finite multiple polylogarithms.
Nevertheless, from our main result, we are able to derive some relations among finite multiple zeta values that we will discuss below.

Let $\zeta_{<N}^{}(\bk)$ denote $\Li_{\bk}^{\sh,<N}(\{1\}^{\dep(\bk)})$.
After Kaneko and Zagier, for an index $\bk$, the \emph{finite multiple zeta value} (FMZV) $\zeta_{\cA}^{}(\bk)$ is defined as
\[
\zeta_{\cA}^{}(\bk)\coloneqq(\zeta_{<p}^{}(\bk)\bmod{p})_{p\in\PP}\in\cA,
\]
where $\PP$ is the set of all prime numbers and
\[
\cA\coloneqq\left.\left(\prod_{p\in\PP} \ZZ/p\ZZ\right) \middle/ \left(\bigoplus_{p\in\PP} \ZZ/p\ZZ\right) \right..
\]
It is known that a certain kind of duality for FMZVs holds as follows.
For two indices $\bk$ and $\bl$, the relation $\bl\preceq \bk$ means that $\bl$ is obtained by replacing some commas in $\bk = (k_1, \dots, k_r)$ by plus signs.
\begin{theorem}[{Hoffman~\cite[Theorem~4.7]{Hoffman2015}}]\label{thm:nonstarduality}
For an index $\bk$, we have
\[
\zeta_{\cA}^{}(\bk)=(-1)^{\dep(\bk)}\sum_{\bk\preceq\bl}\zeta_{\cA}^{}(\bl).
\]
\end{theorem}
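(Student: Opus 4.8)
The plan is to specialize \cref{thm:main} to the all-ones tuple, reduce modulo a prime, and then recognize the resulting finite sum as the refinement sum $\sum_{\bk\preceq\bl}\zeta_{\cA}(\bl)$ by collecting runs of equal summation variables. Throughout, write $r=\dep(\bk)$.

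First I would set every $x_i=1$ in \cref{thm:main} (this is exactly \cref{thm:MSW}) and take $N=p$ a prime. The left-hand side becomes $\zeta_{<p}(\bk)$, so
\[
\zeta_{<p}(\bk)=(-1)^{r}\sum_{\substack{0<n_{j,1}\leq\cdots\leq n_{j,k_j}<p\ (1\leq j\leq r)\\ n_{j,k_j}<n_{j+1,1}\ (1\leq j<r)}}\prod_{j=1}^{r}\frac{1}{(n_{j,1}-p)n_{j,2}\cdots n_{j,k_j}}.
\]
Every summation variable lies in $\{1,\dots,p-1\}$, hence is invertible modulo $p$, and $n_{j,1}-p\equiv n_{j,1}\pmod p$. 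Reducing modulo $p$ therefore replaces each leading factor $(n_{j,1}-p)^{-1}$ with $n_{j,1}^{-1}$, giving
\[
\zeta_{<p}(\bk)\equiv(-1)^{r}\sum_{\substack{0<n_{j,1}\leq\cdots\leq n_{j,k_j}<p\\ n_{j,k_j}<n_{j+1,1}}}\prod_{j=1}^{r}\frac{1}{n_{j,1}n_{j,2}\cdots n_{j,k_j}}\pmod p.
\]

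Next I would analyze this sum by grouping the variables into maximal constant runs. The $\wt(\bk)$ variables form one nondecreasing chain that is forced to increase strictly exactly at the $r-1$ block boundaries (where $n_{j,k_j}<n_{j+1,1}$), while the within-block comparisons $n_{j,l}\leq n_{j,l+1}$ are unconstrained. Recording, for each within-block comparison, whether it is an equality or a strict inequality partitions the variables into maximal runs of equal value, none of which straddles a block boundary. Reading the run sizes from left to right produces a composition $\bl$ of $\wt(\bk)$ whose parts, restricted to block $j$, sum to $k_j$; equivalently, $\bk$ is recovered from $\bl$ by turning the within-block commas into plus signs, so $\bk\preceq\bl$, and conversely every $\bl$ with $\bk\preceq\bl$ arises from a unique run pattern. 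For a fixed pattern with run sizes $\bl=(l_1,\dots,l_s)$, the distinct run values $0<v_1<\cdots<v_s<p$ range freely and contribute $\prod_{t=1}^{s}v_t^{-l_t}$, so summing over them is exactly $\zeta_{<p}(\bl)$. Summing over all patterns then yields
\[
\zeta_{<p}(\bk)\equiv(-1)^{r}\sum_{\bk\preceq\bl}\zeta_{<p}(\bl)\pmod p,
\]
and since this congruence holds for every prime $p$, passing to $\cA$ gives the claimed identity.

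The main obstacle is the combinatorial bookkeeping of the middle step: one must verify carefully that the constraint ``strict at the block boundaries, free within each block'' matches, bijectively and without repetition, the set of refinements $\bl$ with $\bk\preceq\bl$, and that the collapse of each constant run of length $l_t$ to the single factor $v_t^{-l_t}$ reassembles precisely the summand of $\zeta_{<p}(\bl)$. Once this identification is in place, the remaining steps are immediate.
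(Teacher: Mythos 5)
Your proposal is correct and follows essentially the same route as the paper, which simply notes that setting $N=p$ in \eqref{eq:main} (where the binomial ratios become $1$ modulo $p$, i.e.\ reducing to \cref{thm:MSW}) yields \cref{thm:nonstarduality}. The run-collection argument identifying the nested sum with $\sum_{\bk\preceq\bl}\zeta_{<p}(\bl)$, which the paper leaves implicit by deferring to \cite{MaesakaSekiWatanabe-pre}, is carried out correctly in your write-up.
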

Following \cite{MaesakaSekiWatanabe-pre}, when setting $N=p$ in \eqref{eq:main}, all variables disappear, leaving us with merely \cref{thm:nonstarduality}.
However, by setting $N=p-1$ in \eqref{eq:modified_main}, a generalization with variables is obtained.
\begin{theorem}\label{thm:Hoffman_with_variables}
Let $p$ be a prime number, $(k_1,\dots,k_r)$ an index, and $(x_1,\dots,x_r)$ a tuple of indeterminates.
Then we have
\begin{align*}
&\sum_{0<n_1<\cdots<n_r<p}\frac{1}{n_1^{k_1}\cdots n_r^{k_r}}\left[\prod_{i=1}^{r-1}\frac{(x_{i+1}+1)_{n_i}}{(x_i+1)_{n_i}}\right]\frac{(n_r)!}{(x_{r}+1)_{n_r}}\\
&\equiv(-1)^r\sum_{\substack{0< n_{j,1}\leq\cdots\leq n_{j,k_j}<p \ (1\leq j\leq r)\\ n_{j,k_j}<n_{j+1,1} \ (1\leq j<r)}}\prod_{j=1}^r\frac{1}{(n_{j,1}+x_j)n_{j,2}\cdots n_{j,k_j}}\pmod{p},
\end{align*}
where $(x)_n$ denotes the rising factorial, that is, $(x)_n=x(x+1)\cdots(x+n-1)$.
\end{theorem}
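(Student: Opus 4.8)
The plan is to derive this congruence by specializing the modified main identity \eqref{eq:modified_main} of \cref{thm:modified_main} at $N = p-1$ and then reducing modulo $p$. First I would set $N = p-1$ in \eqref{eq:modified_main}, obtaining an equality of rational functions in $x_1, \dots, x_r$ over $\QQ$. With this choice the left-hand summation range $0 < n_1 < \cdots < n_r \leq N$ becomes $0 < n_1 < \cdots < n_r < p$, and likewise every index $n_{j,l}$ on the right runs over $\{1, \dots, p-1\}$, so the ranges already match those in the target statement. Crucially, every integer occurring in a denominator — the factors $n_i^{k_i}$ and $n_{j,2}, \dots, n_{j,k_j}$, together with the factorials $n_i!$ concealed inside the binomial coefficients — then lies strictly between $0$ and $p$ and is therefore a $p$-adic unit.

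The heart of the matter is a short list of elementary congruences. Using $(p-1)x_i - 1 - j \equiv -(x_i + 1 + j) \pmod{p}$ I would establish
\begin{align*}
\binom{(p-1)x_i - 1}{n} = \frac{\prod_{j=0}^{n-1}\bigl((p-1)x_i - 1 - j\bigr)}{n!} \equiv \frac{(-1)^{n}(x_i+1)_n}{n!} \pmod{p},
\end{align*}
and combine it with the classical $\binom{p-1}{n} \equiv (-1)^n \pmod{p}$ and with $n_{j,1} - (p-1)x_j \equiv n_{j,1} + x_j \pmod{p}$. Feeding the first congruence into each ratio $\binom{Nx_{i+1}-1}{n_i}/\binom{Nx_i-1}{n_i}$ cancels the signs $(-1)^{n_i}$ and the factorials, leaving $(x_{i+1}+1)_{n_i}/(x_i+1)_{n_i}$; combining the first two congruences in the final factor $\binom{N}{n_r}/\binom{Nx_r-1}{n_r}$ leaves $n_r!/(x_r+1)_{n_r}$; and the third converts each $1/(n_{j,1}-Nx_j)$ into $1/(n_{j,1}+x_j)$. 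Thus both sides of \eqref{eq:modified_main} pass termwise to the two sides of the asserted congruence, with the overall sign $(-1)^r$ unaffected.

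The main obstacle — really the only point requiring care — is justifying that reducing the rational-function identity modulo $p$ is legitimate and preserves equality. I would make this precise by noting that each summand of \eqref{eq:modified_main} at $N = p-1$ can be written as $f/g$ with $f \in \ZZ_{(p)}[x_1, \dots, x_r]$ and $g$ a product of the $p$-unit integers above with the monic linear polynomials $x_i + 1 + j$ and $x_j + n_{j,1}$; such a $g$ reduces to a nonzero element of $\mathbb{F}_p[x_1, \dots, x_r]$. Hence coefficientwise reduction is well defined on the relevant subring of $\QQ(x_1, \dots, x_r)$, and applying it to the already-proved equality \eqref{eq:modified_main} produces the claimed identity in $\mathbb{F}_p(x_1, \dots, x_r)$, which is exactly the stated congruence. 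No deeper difficulty appears, since the substitution $N = p-1 \equiv -1 \pmod{p}$ was tailored precisely so that $Nx_i \equiv -x_i$ and $\binom{N}{n} \equiv (-1)^n \pmod{p}$.
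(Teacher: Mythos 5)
Your proposal is correct and is exactly the argument the paper intends: the authors state that \cref{thm:Hoffman_with_variables} is obtained "by setting $N=p-1$ in \eqref{eq:modified_main}" and give no further details, and your write-up supplies precisely the missing ones (the congruences $\binom{(p-1)x-1}{n}\equiv(-1)^n(x+1)_n/n!$, $\binom{p-1}{n}\equiv(-1)^n$, $n_{j,1}-(p-1)x_j\equiv n_{j,1}+x_j$, and the check that all denominators reduce to nonzero elements of $\mathbb{F}_p[x_1,\dots,x_r]$ so the reduction of the rational-function identity is legitimate). No gaps.
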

In particular, by comparing coefficients in the case of a single variable, the following relations among FMZVs can be obtained.

To state the theorem and for its proof, we introduce some notation.
For a tuple of non-negative integers $\bl=(l_1,\dots,l_r)$ and an index $\bk=(k_1,\dots, k_r)$, we set
\[
\bl\oplus\bk\coloneqq(l_1+k_1,\dots,l_r+k_r),\quad\bl\oslash\bk\coloneqq(l_1+1,\{1\}^{k_1-1},\dots,l_r+1,\{1\}^{k_r-1}),
\]
and $\wt(\bl)\coloneqq l_1+\cdots+l_r$.
The symbol $\bk^{\star}$ denotes the formal sum $\sum_{\bh\preceq\bk}\bh$.
Let $\mathcal{R}$ be the space of formal $\QQ$-linear combinations of indices.
A $\QQ$-bilinear mapping $\underline{\ast}\colon\mathcal{R}\times\mathcal{R}\to\mathcal{R}$ is defined by $\bk \ \underline{\ast} \ \bl=((\bk_{-} \ast \bl),k)+((\bk_{-}\ast\bl_{-}),k+l)$,
where $\bk=(\bk_{-},k)$ and $\bl=(\bl_{-},l)$.
Here, $\ast$ is the usual harmonic product, that is, $\zeta_{<N}^{}(\bk)\zeta_{<N}^{}(\bl)=\zeta_{<N}^{}(\bk\ast\bl)$.
For a positive integer $N$ and an index $\bk$, set $s_{<N}(\bk)\coloneqq\zeta_{<N+1}^{}(\bk)-\zeta_{<N}^{}(\bk)$ and $\zeta_{\leq N}^{\star}(\bk)\coloneqq\zeta_{<N+1}^{}(\bk^{\star})$.
For $N$, $\bk$, and an index $\bl$, we can check that
\[
s_{<N}^{}(\bk)\zeta_{<N+1}^{}(\bl)=s_{<N}^{}(\bk \ \underline{\ast} \ \bl)
\]
holds.
Here, we consider $\zeta_{<N}^{}$, $s_{<N}^{}$, and $\zeta_{\cA}^{}$ as being extended as mappings over $\mathcal{R}$, $\QQ$-linearly.
\begin{theorem}\label{thm:new?relation}
Let $\bk$ be an index and $m$ a non-negative integer.
Then we have
\[
\zeta_{\cA}^{}(\bk \ \underline{\ast} \ (\{1\}^m)^{\star})=(-1)^{\dep(\bk)}\sum_{\substack{\bl\in\ZZ_{\geq 0}^{\dep(\bk)} \\ \wt(\bl)=m}}\sum_{\bl \oplus\bk\preceq\bh\preceq\bl\oslash\bk}\zeta_{\cA}^{}(\bh).
\]
For the case $m=0$, the left-hand side is interpreted as $\zeta_{\cA}^{}(\bk)$.
\end{theorem}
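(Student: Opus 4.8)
The plan is to deduce the identity from \cref{thm:Hoffman_with_variables} by specializing to a single variable and comparing coefficients. Setting $x_1=\cdots=x_r=x$ makes every factor of the telescoping product $\prod_{i=1}^{r-1}\frac{(x_{i+1}+1)_{n_i}}{(x_i+1)_{n_i}}$ equal to $1$, so the congruence of \cref{thm:Hoffman_with_variables} collapses to
\[
\sum_{0<n_1<\cdots<n_r<p}\frac{1}{n_1^{k_1}\cdots n_r^{k_r}}\frac{(n_r)!}{(x+1)_{n_r}}\equiv(-1)^r\sum_{\substack{0<n_{j,1}\leq\cdots\leq n_{j,k_j}<p\\ n_{j,k_j}<n_{j+1,1}}}\prod_{j=1}^r\frac{1}{(n_{j,1}+x)n_{j,2}\cdots n_{j,k_j}}\pmod p.
\]
Every denominator appearing is a product of factors $(x+j)$ and integers $j$ with $1\le j<p$, each a unit in $(\ZZ/p\ZZ)[[x]]$, so both sides are power series in $x$ over $\ZZ/p\ZZ$. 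I would expand each side about $x=0$ and compare the coefficient of $x^m$.

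For the left-hand side I would write $\frac{(n_r)!}{(x+1)_{n_r}}=\prod_{j=1}^{n_r}(1+x/j)^{-1}=\sum_{m\ge 0}(-x)^m\,h_m(1,\tfrac12,\dots,\tfrac1{n_r})$, where $h_m$ is the complete homogeneous symmetric polynomial, so that $h_m(1,\dots,\tfrac1{n_r})=\sum_{1\le j_1\le\cdots\le j_m\le n_r}\frac{1}{j_1\cdots j_m}=\zeta_{\le n_r}^{\star}(\{1\}^m)=\zeta_{<n_r+1}((\{1\}^m)^{\star})$. Grouping by $n_r=N$ and noting that the partial sum over $n_1<\cdots<n_{r-1}<N$ contributes exactly $s_{<N}(\bk)$, the coefficient of $x^m$ becomes $(-1)^m\sum_{N=1}^{p-1}s_{<N}(\bk)\,\zeta_{<N+1}((\{1\}^m)^{\star})$. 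Applying the relation $s_{<N}(\bk)\zeta_{<N+1}(\bl)=s_{<N}(\bk\ \underline{\ast}\ \bl)$ $\QQ$-linearly in the second slot and telescoping $\sum_{N=1}^{p-1}s_{<N}=\zeta_{<p}$, this equals $(-1)^m\zeta_{<p}(\bk\ \underline{\ast}\ (\{1\}^m)^{\star})$; for $m=0$ it is simply $\zeta_{<p}(\bk)$.

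For the right-hand side I would expand $\frac{1}{n_{j,1}+x}=\sum_{l_j\ge0}(-1)^{l_j}x^{l_j}\,n_{j,1}^{-(l_j+1)}$ in each block, so the coefficient of $x^m$ is $(-1)^{r+m}\sum_{\bl}\sum\prod_{j}\frac{1}{n_{j,1}^{l_j+1}n_{j,2}\cdots n_{j,k_j}}$, the outer sum over $\bl=(l_1,\dots,l_r)\in\ZZ_{\ge0}^r$ with $\wt(\bl)=m$ and the inner sum over the original nondecreasing-within-block, strictly-increasing-between-blocks range. For fixed $\bl$, block $j$ carries the exponent tuple $(l_j+1,\{1\}^{k_j-1})$, which is precisely block $j$ of $\bl\oslash\bk$. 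Rewriting each nondecreasing block sum as a sum of strictly increasing sums over merges (the standard stuffle expansion turning $\le$ into $<$) converts the inner sum into $\sum_{\bh}\zeta_{<p}(\bh)$ over all merges of $\bl\oslash\bk$; because the inter-block inequalities are strict, no merge crosses a block boundary, and the admissible $\bh$ are exactly those with $\bl\oplus\bk\preceq\bh\preceq\bl\oslash\bk$ (the total within-block merge of $\bl\oslash\bk$ being $\bl\oplus\bk$). Hence the coefficient of $x^m$ equals $(-1)^{r+m}\sum_{\wt(\bl)=m}\sum_{\bl\oplus\bk\preceq\bh\preceq\bl\oslash\bk}\zeta_{<p}(\bh)$.

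Equating the two coefficients of $x^m$, cancelling the common factor $(-1)^m$, and reducing modulo $p$ over all primes (i.e.\ passing to $\cA$) yields the theorem. The main obstacle is the combinatorial bookkeeping of the third paragraph: one must verify carefully that the within-block merges of $\bl\oslash\bk$ coincide exactly with the refinement interval $\{\bh:\bl\oplus\bk\preceq\bh\preceq\bl\oslash\bk\}$ — equivalently, that the constraint $\bl\oplus\bk\preceq\bh$ excludes precisely the cross-boundary merges already forbidden by the strict inequalities $n_{j,k_j}<n_{j+1,1}$ — and that the single sign $(-1)^m$ is tracked consistently through both expansions. A secondary point to record is the legitimacy of comparing power-series coefficients modulo $p$, which rests on all denominators being units in $(\ZZ/p\ZZ)[[x]]$.
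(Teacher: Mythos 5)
Your proposal is correct and follows essentially the same route as the paper's proof: specialize \cref{thm:Hoffman_with_variables} to a single variable, expand $(n_r)!/(x+1)_{n_r}$ via $\zeta_{\le n_r}^{\star}(\{1\}^m)$ and use $s_{<N}(\bk)\zeta_{<N+1}(\bl)=s_{<N}(\bk\ \underline{\ast}\ \bl)$ with telescoping on the left, expand $(n_{j,1}+x)^{-1}$ geometrically and identify the refinement interval $\bl\oplus\bk\preceq\bh\preceq\bl\oslash\bk$ on the right, then compare coefficients of $x^m$. The extra care you flag about unit denominators and the block-merge bookkeeping is sensible but does not alter the argument.
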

\begin{proof}
Let $p$ be a prime number and $\bk=(k_1,\dots,k_r)$ an index.
In \cref{thm:Hoffman_with_variables}, by setting $x_1=\cdots=x_r=x$, we obtain the following congruence:
\begin{align*}
&\sum_{0<n_1<\cdots<n_r<p}\frac{1}{n_1^{k_1}\cdots n_r^{k_r}}\frac{(n_r)!}{(x+1)_{n_r}}\\
&\equiv(-1)^r\sum_{\substack{0< n_{j,1}\leq\cdots\leq n_{j,k_j}<p \ (1\leq j\leq r)\\ n_{j,k_j}<n_{j+1,1} \ (1\leq j<r)}}\prod_{j=1}^r\frac{1}{(n_{j,1}+x)n_{j,2}\cdots n_{j,k_j}}\pmod{p}.
\end{align*}
By an expansion
\begin{align*}
\frac{(n_r)!}{(x+1)_{n_r}}&=\prod_{i=1}^{n_r}\left(\frac{x}{i}+1\right)^{-1}=\prod_{i=1}^{n_r}\sum_{m_i=0}^{\infty}\frac{(-x)^{m_i}}{i^{m_i}}\\
&=\sum_{m=0}^{\infty}(-x)^m\sum_{m_1+\cdots+m_{n_r}=m}\frac{1}{1^{m_1}2^{m_2}\cdots n_r^{m_{n_r}}}=\sum_{m=0}^{\infty}\zeta_{\leq n_r}^{\star}(\{1\}^m)(-x)^m,
\end{align*}
we compute
\begin{align*}
\sum_{0<n_1<\cdots<n_r<p}\frac{1}{n_1^{k_1}\cdots n_r^{k_r}}\frac{(n_r)!}{(x+1)_{n_r}}&=\sum_{n_r=1}^{p-1}\sum_{m=0}^{\infty} s_{<n_r}(\bk)\zeta_{\leq n_r}^{\star}(\{1\}^m)(-x)^m\\
&=\sum_{m=0}^{\infty}\sum_{n_r=1}^{p-1}s_{<n_r}(\bk \ \underline{\ast} \ (\{1\}^m)^{\star})(-x)^m\\
&=\sum_{m=0}^{\infty}\zeta_{<p}^{}(\bk \ \underline{\ast} \ (\{1\}^m)^{\star})(-x)^m.
\end{align*}
On the other hand, by a simple expansion $(n+x)^{-1}=\sum_{l=0}^{\infty}(-x)^l/n^{l+1}$,
we compute
\begin{align*}
&\sum_{\substack{0< n_{j,1}\leq\cdots\leq n_{j,k_j}<p \ (1\leq j\leq r)\\ n_{j,k_j}<n_{j+1,1} \ (1\leq j<r)}}\prod_{j=1}^r\frac{1}{(n_{j,1}+x)n_{j,2}\cdots n_{j,k_j}}\\
&=\sum_{m=0}^{\infty}(-x)^m\sum_{\substack{l_1+\cdots+l_r=m \\ l_j\geq 0 \ (1\leq j\leq r)}}\sum_{\substack{0< n_{j,1}\leq\cdots\leq n_{j,k_j}<p \ (1\leq j\leq r)\\ n_{j,k_j}<n_{j+1,1} \ (1\leq j<r)}}\prod_{j=1}^r\frac{1}{n_{j,1}^{l_j+1}n_{j,2}\cdots n_{j,k_j}}.
\end{align*}
Since
\[
\sum_{\substack{0< n_{j,1}\leq\cdots\leq n_{j,k_j}<p \ (1\leq j\leq r)\\ n_{j,k_j}<n_{j+1,1} \ (1\leq j<r)}}\prod_{j=1}^r\frac{1}{n_{j,1}^{l_j+1}n_{j,2}\cdots n_{j,k_j}}=\sum_{\bl \oplus\bk\preceq\bh\preceq\bl\oslash\bk}\zeta_{<p}^{}(\bh)
\]
holds, we have the conclusion.
\end{proof}
This theorem might be considered as a finite analogue of the relations among MZVs derived by Kawashima (\cite[Proposition~5.3]{Kawashima-pre}), due to the somewhat similar form of the expressions.
% --------------------------------------------------------------------------
\section{Extended double shuffle relations for multiple polylogarithms}\label{sec:EDSR}
% --------------------------------------------------------------------------
In \cite{Seki-pre}, a quite simple proof of the extended double shuffle relations (EDSR) for MZVs, not utilizing integrals and using \cref{thm:MSW}, is provided by the third author.

His proof can be summarized as follows:
The double shuffle relations (DSR), which is needed for the proof of the EDSR, is usually proved using \eqref{eq:IIEforMZV}.
\cref{thm:MSW} allows for an alternative proof of the DSR based on manipulations of finite sums.
As the sums are finite, this manipulations are possible even for non-admissible indices; this extension of the DSR is referred to as the asymptotic double shuffle relations (ADSR). 
The proof using \eqref{eq:IIEforMZV} is only valid for admissible indices.
Hence, in the typical proof of the EDSR (such as \cite{IharaKanekoZagier2006}), two types of regularization of MZVs are introduced, and the regularization theorem (Reg), which compares them, is proved.
Then the EDSR is proved by combining the DSR and the Reg.
In the new proof, the EDSR can be easily derived from the ADSR, and in this process, neither the shuffle regularization nor the Reg is necessary.

In this section, we extend his proof to offer a simple proof of the extended double shuffle relations for MPLs.
While a description exactly similar to \cref{thm:EDSR_for_MPL} may not be found, essentially the same has been studied by Goncharov~\cite{Goncharov-pre}, Racinet~\cite{Racinet2002}, and Arakawa--Kaneko~\cite{ArakawaKaneko2004}.
% --------------------------------------------------------------------------
\subsection{Notation and the statement}
% --------------------------------------------------------------------------
Let $N$ denote a positive integer.
For each complex number $z$, we prepare an indeterminate $e_z$, and consider the non-commutative polynomial ring $\fH\coloneqq\QQ\langle e_z \mid z\in\CC\rangle$.
Let $e_{z,k}\coloneqq e_ze_0^{k-1}$ for each complex number $z$ and each positive integer $k$.
For each index $\bk=(k_1,\dots, k_r)$ and a tuple of complex numbers $\bz=(z_1,\dots, z_r)$, we put $e_{\bz,\bk}\coloneqq e_{z_1,k_1}\cdots e_{z_r,k_r}$.
We define a subspace $\fH^1$ of $\fH$ as
\[
\fH^1\coloneqq\QQ+\sum_{z\in\CC^{\times}}e_z\fH.
\]
We also define a subring $\fH_{\sh}$ of $\fH$ and subspaces $\fH^1_{\sh}$ and $\fH^0_{\sh}$ as follows:
\begin{align*}
\fH_{\sh}\coloneqq\QQ\langle e_0, e_z \mid |z|\geq 1\rangle
\supset \ &\fH_{\sh}^1\coloneqq\QQ+\sum_{|z|\geq 1}e_z\fH_{\sh}\\
\supset \ &\fH_{\sh}^0\coloneqq\QQ+\sum_{|z|\geq 1}e_z\fH_{\sh}e_0+\sum_{|z|,|w|\geq 1, w\neq 1}e_z\fH_{\sh}e_w.
\end{align*}
We define a $\QQ$-linear mapping $\top\colon\fH^1_{\sh}\to\fH$ by $\top(1)\coloneqq1$ and 
\[
\top(e_{z_1,k_1}\cdots e_{z_r,k_r})\coloneqq e_{z_2/z_1,k_1}\cdots e_{z_r/z_{r-1},k_{r-1}}e_{z_r^{-1},k_r}.
\]
Set $\fH_*^1\coloneqq\top(\fH_{\sh}^1)\subset\fH$.
The \emph{harmonic product} $*$ on $\fH^1$ is defined by rules $w*1=1*w=w$ for any word $w\in\fH^1$, and 
\[
we_{\xi_1,k_1}*w'e_{\xi_2,k_2}=(w*w'e_{\xi_2,k_2})e_{\xi_1,k_1}+(we_{\xi_1,k_1}*w')e_{\xi_2,k_2}+(w*w')e_{\xi_1\xi_2,k_1+k_2}
\]
for any words $w, w'\in\fH^1$, $k_1, k_2\in\ZZ_{>0}$, and $\xi_1, \xi_2\in\CC^{\times}$, with $\QQ$-bilinearity.
The \emph{shuffle product} $\sh$ on $\fH_{\sh}$ is defined by rules $w\sh 1=1\sh w=w$ for any word $w\in\fH_{\sh}$, and
\[
wu_1\sh w'u_2=(w\sh w'u_2)u_1+(wu_1\sh w')u_2
\]
for any words $w, w'\in\fH_{\sh}$, $u_1, u_2\in\{e_z\mid z=0 \text{ or } |z|\geq 1\}$, with $\QQ$-bilinearity.
We can check that the image of $\fH_*^1\times \fH_*^1$ under $*$ is included in $\fH_*^1$ and the image of $\fH_{\sh}^1\times \fH_{\sh}^1$ under $\sh$ is included in $\fH_{\sh}^1$.

For an index $\bk=(k_1,\dots,k_r)$ and a tuple of complex numbers $\bxi=(\xi_1,\dots,\xi_r)$, we define $\Li_{\bk}^{*,<N}(\bxi)$ by
\[
\Li_{\bk}^{*,<N}(\bxi)\coloneqq\sum_{0<n_1<\cdots<n_r<N}\frac{\xi_1^{n_1}\cdots \xi_r^{n_r}}{n_1^{k_1}\cdots n_r^{k_r}}.
\]
If $(k_r,\xi_r)\neq(1,1)$ and $\left|\prod_{j=i}^r\xi_j\right|\leq 1$ for all $1\leq i\leq  r$, then the limit $\displaystyle\lim_{N\to\infty}\Li_{\bk}^{*,<N}(\bxi)$ exists and the limit value is denoted by
\[
\Li_{\bk}^{*}(\bxi)=\sum_{0<n_1<\cdots<n_r}\frac{\xi_1^{n_1}\cdots \xi_r^{n_r}}{n_1^{k_1}\cdots n_r^{k_r}}.
\]
There is a simple relationship between the two types of multiple polylogarithm symbols $\Li_{\bk}^*$ and $\Li_{\bk}^{\sh}$:
\begin{align*}
\Li_{\bk}^*(\xi_1,\dots,\xi_r)&=\Li_{\bk}^{\sh}\left(\prod_{j=1}^r\xi_j^{-1},\prod_{j=2}^r\xi_j^{-1},\dots,\xi_{r-1}^{-1}\xi_r^{-1},\xi_r^{-1}\right),\\
\Li_{\bk}^{\sh}(z_1,\dots,z_r)&=\Li_{\bk}^*\left(\frac{z_2}{z_1},\dots,\frac{z_r}{z_{r-1}},\frac{1}{z_r}\right).
\end{align*}
The notation $\log^{\bullet}N$ means the existence of some positive integer $m$ independent of $N$, represented as $\log^m N$.
\begin{proposition}[cf.~{\cite[Corollaire~2.1.8]{Racinet2002}}]\label{prop:reg_polynomial}
We assume $\left|\prod_{j=i}^r\xi_j\right|\leq 1$ for all $1\leq i\leq  r$.
Then there exists a polynomial $\rL_{\bk,\bxi}^*(x)\in\CC[x]$ such that
\[
\Li_{\bk}^{*,<N}(\bxi)=\rL_{\bk,\bxi}^*(\log N+\gamma)+O(N^{-1}\log^{\bullet}N)
\]
as $N\to\infty$.
Here $\gamma$ is Euler's constant.
Furthermore, the coefficient of $x^i$ in $\rL_{\bk,\bxi}^*(x)$ can be expressed as a $\QQ$-linear combination of converging multiple polylogarithms associated with indices of weight $\wt(\bk)-i$.
In particular, the coefficient of $x^{\wt(\bk)}$ is a rational number.
\end{proposition}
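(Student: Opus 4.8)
The plan is to use the fact that the truncated series $\Li_{\bk}^{*,<N}$ is a homomorphism for the harmonic product $*$, combined with the algebraic structure of $(\fH^1,*)$ as a polynomial algebra over its convergent part. First I would record the stuffle identity for truncated sums: interleaving two strictly increasing chains $0<n_1<\cdots$ and $0<m_1<\cdots$, allowing equalities $n_a=m_b$ (the diagonal contributing $(\xi_a\eta_b)^n/n^{k_a+l_b}$), reproduces exactly the three terms of the $*$-recursion, so that $\Li_{\bk}^{*,<N}(\bxi)\,\Li_{\bl}^{*,<N}(\boldsymbol{\eta})=\Li^{*,<N}(e_{\bxi,\bk}*e_{\boldsymbol{\eta},\bl})$. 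Thus, for each $N$, the map $\Li^{*,<N}\colon(\fH^1,*)\to\CC$ is a $\QQ$-algebra homomorphism, and it sends the letter $e_{1,1}$ to $\sum_{n=1}^{N-1}\frac1n=\log N+\gamma+O(N^{-1})$.

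Next I would invoke the Hoffman--Racinet-type regularization structure (cf.\ \cite[Corollaire~2.1.8]{Racinet2002}): with respect to $*$, the letter $e_{1,1}$ is transcendental over the subalgebra generated by the convergent words (those whose last block satisfies $(k_r,\xi_r)\neq(1,1)$), and $\fH^1$ is the polynomial algebra over that subalgebra in the single variable $e_{1,1}$. Since $*$ is homogeneous for the weight grading in which $e_{z,k}$ has degree $k$, every word admits a unique weight-homogeneous decomposition
\[
e_{\bxi,\bk}=\sum_{i=0}^{\wt(\bk)} w_i * e_{1,1}^{*i},
\]
where each $w_i$ is a convergent word (or $\QQ$-combination thereof) of weight $\wt(\bk)-i$, and $w_{\wt(\bk)}\in\QQ$. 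Applying the homomorphism gives
\[
\Li_{\bk}^{*,<N}(\bxi)=\sum_{i=0}^{\wt(\bk)}\Li^{*,<N}(w_i)\bigl(\log N+\gamma+O(N^{-1})\bigr)^i.
\]

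The analytic heart is to show that each convergent piece satisfies $\Li^{*,<N}(w_i)=\Li^{*}(w_i)+O(N^{-1}\log^{\bullet}N)$. This is where the hypothesis $\bigl|\prod_{j=i}^r\xi_j\bigr|\le1$ is used: it guarantees that the arguments arising in the $w_i$ retain bounded relevant partial products, so that the truncation tail is controlled by Abel (partial) summation exactly as in \cref{lem:error} and \cite[Lemma~2.2]{Seki-pre}. Setting $\rL_{\bk,\bxi}^{*}(x)\coloneqq\sum_{i}\Li^{*}(w_i)\,x^i$ and expanding $\bigl(\log N+\gamma+O(N^{-1})\bigr)^i=(\log N+\gamma)^i+O(N^{-1}\log^{\bullet}N)$ then yields the stated asymptotic. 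The coefficient of $x^i$ is $\Li^{*}(w_i)$, a $\QQ$-linear combination of convergent multiple polylogarithms of weight $\wt(\bk)-i$; for $i=\wt(\bk)$ the word $w_i$ has weight $0$, hence lies in $\QQ$, so the top coefficient is rational, as required.

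The main obstacle is the uniform error estimate for the convergent pieces in the conditionally convergent range, where some partial product has modulus $1$ but is not equal to $1$: there the crude absolute bounds fail, and one must instead exploit the boundedness of the character sums $\sum_{n\le M}\bigl(\prod_{j\ge i}\xi_j\bigr)^{n}$. A secondary bookkeeping point is to verify that the words $w_i$ produced by the purely algebraic decomposition indeed satisfy the partial-product condition needed to invoke those estimates; both issues are handled by the $R_{<N}$-machinery of \cref{lem:error}, while the algebraic decomposition itself is standard.
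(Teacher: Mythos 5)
Your proposal is correct and follows essentially the same route as the paper: the paper likewise reduces everything to the convergent-case estimate $\Li_{\bk}^{*,<N}(\bxi)=\Li_{\bk}^{*}(\bxi)+O(N^{-1}\log^{\bullet}N)$ and then treats divergent words via the harmonic product formula, the only presentational difference being that it peels off the trailing $e_{1,1}$'s by induction on $s$ in $\bk=(\bk',\{1\}^s)$ (decomposing $\zeta_{<N}(1)\Li^{*,<N}_{(\bk',\{1\}^{s-1})}(\bxi',\{1\}^{s-1})$) instead of quoting the polynomial-algebra structure of $(\fH^1_*,*)$ over its convergent subalgebra outright. The bookkeeping point you flag is indeed the only thing to verify, and it is harmless because inserting or merging a letter with argument $1$ leaves all partial products $\prod_{j\geq i}\xi_j$ unchanged.
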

\begin{proof}
The proof is standard, so it is described in a sketchy manner.
First, prove
\[
\Li_{\bk}^{*,<N}(\bxi)=\Li_{\bk}^*(\bxi)+O(N^{-1}\log^{\bullet}N)
\]
as $N\to\infty$ in the case where $(k_r,\xi_r)\neq(1,1)$ is satisfied.
Then, in the case $\bk=(\bk',\{1\}^s)$ and $\bxi=(\bxi',\{1\}^s)$ for some positive integer $s$ and some pair $(\bk',\bxi')$ satisfying the convergence condition, prove the desired assertion by induction on $s$ based on the decomposition of $\zeta_{<N}^{}(1)\Li_{(\bk',\{1\}^{s-1})}^{*,<N}(\bxi',\{1\}^{s-1})$ using the harmonic product formula (\cref{prop:harmonic} below).
\end{proof}
Let $\rL^*(e_{\bxi,\bk})$ be the constant term of $\rL_{\bk,\bxi}^*(x)$ for each $e_{\bxi,\bk}\in\fH_*^1$, and together with $\rL^*(1)\coloneqq1$, extend it to a $\QQ$-linear mapping $\rL^*\colon\fH_*^1\to\CC$.
Any image of an element in $\fH_*^1$ under $\rL^*$ can be expressed as a $\QQ$-linear combination of converging multiple polylogarithms.

The purpose of this section is to provide a simple proof of the following theorem.
\begin{theorem}[Extended double shuffle relations for MPLs]\label{thm:EDSR_for_MPL}
For any $w_1\in\fH_{\sh}^1$, $w_0\in\fH_{\sh}^0$, we have
\[
\rL^*(\top(w_1)*\top(w_0)-\top(w_1\sh w_0))=0.
\]
\end{theorem}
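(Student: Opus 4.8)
The plan is to mirror the structure of Seki's proof of the EDSR for MZVs \cite{Seki-pre}, replacing each appearance of \cref{thm:MSW} with our \cref{thm:main}. The central idea is to prove the stronger \emph{asymptotic double shuffle relations} (ADSR) for the finite sums $\Li_{\bk}^{*,<N}$ at the level of $\log N$-polynomials, and then extract \cref{thm:EDSR_for_MPL} by comparing constant terms. First I would establish the two product formulas on the level of finite truncated sums. The harmonic product side is essentially formal: the identity $\Li_{\bk}^{*,<N}(\bxi)\Li_{\bl}^{*,<N}(\bzeta)=\Li_{\ast}^{*,<N}(\text{harmonic product})$ follows directly from the definition of $\Li_{\bk}^{*,<N}$ as a truncated nested sum, splitting the product over the region $\{n_r<m_s\}\cup\{n_r>m_s\}\cup\{n_r=m_s\}$. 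In the $\ast$-algebra language this says $\rL^*$ is an algebra homomorphism from $(\fH_*^1,*)$.

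The genuinely new input is the shuffle side. Here I would use \cref{thm:main} (equivalently \eqref{eq:main}), which expresses $(-1)^{\dep(\bk)}\widetilde{\Li}_{\bk}^{\sh,(N)}(\bx)=\ii_{\bk}^{(N)}(\bx)$ as a discretized iterated integral. The point of the iterated-integral form is that the product $\ii_{\bk}^{(N)}\cdot\ii_{\bl}^{(N)}$ of two such nested ``simplex'' sums decomposes, by shuffling the two chains of summation variables, into a single sum over the shuffle index $\bk\sh\bl$—exactly as in the classical proof of the shuffle relation, but now on finite discrete sums so that no convergence is needed and \emph{non-admissible} indices are permitted. Translating back through \eqref{eq:main} gives a shuffle product formula for $\widetilde{\Li}^{\sh,(N)}$, and then \cref{prop:asymptotic_for_modifiedMPL} lets me replace $\widetilde{\Li}^{\sh,(N)}$ by $\Li^{\sh,<N}$ up to $O(N^{-1/3}\log^r N)$. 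Using the change of variables $\top$ relating $\Li^{\sh}$ and $\Li^*$, this yields the shuffle product identity for $\Li^{*,<N}$, again valid for all indices in $\fH_{\sh}^1$ and $\fH_{\sh}^0$.

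With both product formulas in hand for the finite sums, I would invoke \cref{prop:reg_polynomial}: each $\Li_{\bk}^{*,<N}(\bxi)$ (for arguments satisfying the convergence-region hypothesis) agrees with a polynomial $\rL_{\bk,\bxi}^*(\log N+\gamma)$ up to $O(N^{-1}\log^{\bullet}N)$. Substituting the asymptotic expansions into the harmonic and shuffle product identities and comparing the two polynomial expressions in the variable $X=\log N+\gamma$, the error terms vanish in the limit and one obtains an \emph{equality of polynomials} $\rL^*_{\top(w_1)*\top(w_0)}(X)=\rL^*_{\top(w_1\sh w_0)}(X)$. Taking the constant term (the value of $\rL^*$) of the difference $\top(w_1)*\top(w_0)-\top(w_1\sh w_0)$ then gives the desired vanishing. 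The main obstacle I anticipate is bookkeeping at the two boundaries: first, verifying that $\top$ intertwines the shuffle identity for $\Li^{\sh,<N}$ with a clean statement about $\Li^{*,<N}$ on the correct subspaces $\fH_{\sh}^1,\fH_{\sh}^0$ (and that products stay inside these spaces, as asserted just before the theorem); and second, ensuring the error estimates are uniform enough to survive multiplication of two asymptotic series, which is where the $O(N^{-1/3}\log^r N)$ bound of \cref{prop:asymptotic_for_modifiedMPL} and the $O(N^{-1}\log^{\bullet}N)$ bound of \cref{prop:reg_polynomial} must be combined carefully so that no cross term of size $O(\log^{\bullet}N\cdot N^{-1/3}\log^{\bullet}N)$ spoils the comparison of polynomials.
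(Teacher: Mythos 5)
Your proposal follows essentially the same route as the paper: the formal harmonic product formula for the truncated sums, an asymptotic shuffle product formula for $\ii^{(N)}$ obtained by decomposing the product of discrete simplices and transported to the $\Li^{*,<N}$ side via \cref{thm:main} and \cref{prop:asymptotic_for_modifiedMPL}, and finally the comparison of $\log N$-polynomials from \cref{prop:reg_polynomial} exactly as in \cite{Seki-pre}. The one imprecision is your suggestion that the discrete shuffle decomposition is exact ``as in the classical proof'': the diagonal terms $n_i=m_j$ do contribute and produce the $O(N^{-1}\log^{\bullet}N)$ error controlled by \cref{lem:error} (this is why the paper states only an \emph{asymptotic} shuffle product formula, and why membership of $w_0$ in $\fH_{\sh}^0$ matters), but since you correctly flag the error bookkeeping as the delicate point, this does not change the substance of the argument.
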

% --------------------------------------------------------------------------
\subsection{Product formulas}
% --------------------------------------------------------------------------
The $\QQ$-linear mapping $\rL_{<N}\colon\fH_*^1\to\CC$ is defined by $\rL_{<N}(1)\coloneqq1$ and $\rL_{<N}(e_{\bxi,\bk})\coloneqq\Li_{\bk}^{*,<N}(\bxi)$.
\begin{proposition}[Harmonic product formula]\label{prop:harmonic}
For $y$, $y'\in\fH_*^1$, we have
\[
\rL_{<N}(y)\rL_{<N}(y')=\rL_{<N}(y*y').
\]
\end{proposition}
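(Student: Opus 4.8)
The plan is to prove the apparently stronger statement that $\rL_{<M}(y)\rL_{<M}(y')=\rL_{<M}(y*y')$ holds for \emph{every} positive integer $M$ at once; this uniformity in the truncation bound is what will let me feed the induction hypothesis back in at interior bounds. By the $\QQ$-bilinearity of both sides it suffices to treat words $y=e_{\bxi,\bk}$ and $y'=e_{\bxi',\bk'}$, and the closure of $\fH_*^1$ under $*$ recorded above ensures that every monomial appearing on the right again lies in the domain of $\rL_{<M}$. I would induct on $\dep(\bk)+\dep(\bk')$. When one of the two words equals $1$ the defining rules give $y*y'=y'$ or $y$, and the claim is immediate from $\rL_{<M}(1)=1$.

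The workhorse is the elementary recursion obtained by isolating the largest summation index, namely
\[
\rL_{<M}(z\,e_{\xi,k})=\sum_{n=1}^{M-1}\frac{\xi^{n}}{n^{k}}\,\rL_{<n}(z)
\]
for any $z\in\fH_*^1$, which is immediate from the series defining $\Li_{\bk}^{*,<M}(\bxi)$. For the inductive step I write $y=w\,e_{\xi_1,k_1}$ and $y'=w'\,e_{\xi_2,k_2}$, expand the product of the two single sums as
\[
\rL_{<M}(y)\rL_{<M}(y')=\sum_{a=1}^{M-1}\sum_{b=1}^{M-1}\frac{\xi_1^{a}\xi_2^{b}}{a^{k_1}b^{k_2}}\,\rL_{<a}(w)\,\rL_{<b}(w'),
\]
and split the square $\{1,\dots,M-1\}^{2}$ into the three regions $a<b$, $a>b$, and $a=b$. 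In the region $a<b$ the recursion reassembles $\sum_{a<b}\xi_1^{a}a^{-k_1}\rL_{<a}(w)$ into $\rL_{<b}(y)$; the induction hypothesis at bound $b$ converts $\rL_{<b}(y)\rL_{<b}(w')$ into $\rL_{<b}(y*w')$, and a second use of the recursion identifies the whole region with $\rL_{<M}\bigl((y*w')\,e_{\xi_2,k_2}\bigr)$. The region $a>b$ is symmetric and yields $\rL_{<M}\bigl((w*y')\,e_{\xi_1,k_1}\bigr)$, while the diagonal $a=b$ collapses to $\sum_{a}(\xi_1\xi_2)^{a}a^{-(k_1+k_2)}\rL_{<a}(w*w')=\rL_{<M}\bigl((w*w')\,e_{\xi_1\xi_2,k_1+k_2}\bigr)$, each step using the hypothesis at strictly smaller total depth. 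Summing the three contributions reproduces exactly $\rL_{<M}$ applied to the three-term defining expansion of $w\,e_{\xi_1,k_1}*w'\,e_{\xi_2,k_2}$, which closes the induction.

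The computation is entirely formal because every sum in sight is finite, so there is no analytic content to worry about; the only point that needs care — and the one I would flag as the main obstacle — is the bookkeeping that matches the three index regions term-by-term with the three summands in the recursive definition of $*$. Setting up the induction uniformly in $M$ (rather than for the single bound $N$) is the device that makes this matching go through cleanly, since the inner truncations $\rL_{<a}$ and $\rL_{<b}$ are precisely instances of the hypothesis at smaller depth and a smaller bound. I note that commutativity of $*$ is not needed: in the region $a<b$ I simply reorder the two complex numbers $\rL_{<b}(y)$ and $\rL_{<b}(w')$ before applying the hypothesis, so the middle term comes out as $(y*w')\,e_{\xi_2,k_2}$ directly.
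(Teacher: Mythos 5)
Your proof is correct and is precisely the standard ``straightforward and well-known'' argument that the paper invokes without writing out: induction on total depth, isolating the outermost summation index and splitting the double sum over the regions $a<b$, $a>b$, $a=b$, which match the three terms of the quasi-shuffle recursion defining $*$. The only cosmetic point worth flagging is that the prefixes $w$, $w'$ of words in $\fH_*^1$ need not themselves lie in $\fH_*^1$ (the condition $|\prod_{j\geq i}\xi_j|\leq 1$ is not inherited by truncation), so in the induction you should regard $\rL_{<M}$ as extended to all of $\fH^1$ by the same finite-sum formula --- harmless, since every sum is finite --- after which your argument goes through verbatim.
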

\begin{proof}
This is straightforward and well-known.
\end{proof}
The $\QQ$-linear mapping $\ii^{(N)}\colon\fH_{\sh}^1\to\CC$ is defined by $\ii^{(N)}(1)\coloneqq1$ and $\ii^{(N)}(e_{\bz,\bk})\coloneqq\ii_{\bk}^{(N)}(\bz)$.
\begin{proposition}[Asymptotic shuffle product formula]\label{prop:asymp_shuffle}
For $w_1\in\fH_{\sh}^1$, $w_0\in\fH_{\sh}^0$, we have
\[
\ii^{(N)}(w_1)\ii^{(N)}(w_0)=\ii^{(N)}(w_1\sh w_0)+O(N^{-1}\log^{\bullet}N)
\]
as $N\to\infty$.
\end{proposition}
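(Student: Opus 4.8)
The plan is to reduce the asymptotic shuffle formula to the ordinary iterated-integral shuffle, using the fact that $\ii^{(N)}_{\bk}(\bx)$ is a Riemann sum for the iterated integral $\ii_{\bk}(\bz)$. First I would recall that for the genuine iterated integrals one has the exact shuffle identity $\ii(w_1)\ii(w_0) = \ii(w_1 \sh w_0)$, which holds because the shuffle product encodes the decomposition of a product of two simplices into a union of simplices indexed by shuffles. The strategy is to mimic this combinatorial decomposition at the discrete level. Concretely, writing $\ii^{(N)}(w_1)$ and $\ii^{(N)}(w_0)$ as multiple sums over the variables $n_{j,l}$ ranging in $\{1,\dots,N-1\}$, their product is a sum over the disjoint union of two independent families of summation indices; I would then partition this product according to the relative order of the two families of indices. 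Each total order corresponds to one term of the shuffle $w_1 \sh w_0$, provided all the indices in the two families are distinct.

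The key steps, in order, are as follows. First, expand the product $\ii^{(N)}(w_1)\ii^{(N)}(w_0)$ as a single sum over the merged index set with \emph{no} constraint relating the two families. Second, stratify this sum according to every way of interleaving the two strictly/weakly ordered chains; the strata where all indices are in ``general position'' (i.e.\ the interleaving respects the strict and weak inequalities dictated by the $0$-forms versus the $e_z$-forms) reassemble exactly into $\ii^{(N)}(w_1\sh w_0)$. Third, estimate the leftover ``diagonal'' strata, where at least one index from the first family coincides with one from the second family in a way not already permitted by the weak inequalities. These diagonal contributions are the discrepancy between the discrete product and the discrete shuffle, and I claim each is $O(N^{-1}\log^{\bullet}N)$. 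To see this, a coincidence $n_{j,l} = n_{j',l'}$ collapses two summation variables into one, raising the exponent on a single factor $n^{-1}$ or $(n-Nx)^{-1}$; the hypothesis $w_0 \in \fH^0_{\sh}$ (forcing admissibility-type boundary behavior, in particular a trailing $e_0$ or a trailing $e_w$ with $w\neq 1$) is exactly what makes the resulting $R^{(\bz)}_{<N}$-value fall under one of the three conditions of \cref{lem:error}, yielding the gain of $N^{-1}$. I would phrase the bookkeeping using the $R_{<N}^{(\bz)}$-notation of \cref{def:R-values} and invoke \cref{lem:error} for the error term.

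The main obstacle I anticipate is the third step: organizing the diagonal strata cleanly and verifying that \emph{every} such stratum genuinely meets one of the three conditions of \cref{lem:error}. The subtlety is that the discrete integral sums carry weak inequalities $n_{j,l}\le n_{j,l+1}$ within a block but strict inequalities $n_{j,k_j}<n_{j+1,1}$ between blocks, so ``coincidence'' must be defined carefully relative to which inequalities are already weak; a collapse that merely saturates an allowed weak inequality is \emph{not} an error term but part of the shuffle itself. Correctly separating genuine diagonal degeneracies from admissible equalities is where the combinatorial care is needed, and it is precisely here that the structural condition $w_1 \in \fH^1_{\sh}$, $w_0 \in \fH^0_{\sh}$ must be used to guarantee that each genuine degeneracy produces either a factor with $a_i+b_i\ge 2$ and $b_i\ge1$, or a later index with $b_j\ge1$, or a factor with $z_{j,l}\neq 1$. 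Once that dictionary is set up, the error estimate is a direct application of \cref{lem:error}, and the main terms match $\ii^{(N)}(w_1\sh w_0)$ by construction.
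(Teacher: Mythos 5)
Your proposal follows essentially the same route as the paper: decompose the summation range of the product into shuffle strata plus diagonal strata (the discrete analogue of the simplex decomposition behind the integral shuffle formula), and bound the diagonal contributions via \cref{lem:error}, using $w_0\in\fH_{\sh}^0$ to ensure each degenerate stratum meets one of the lemma's three conditions. The paper's own proof is exactly this sketch (deferring details to [S2]), so your account is a correct and somewhat more explicit version of the same argument.
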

\begin{proof}
It is understood that $\ii^{(N)}$ satisfies the shuffle product formula up to error terms through the same mechanism used to prove the shuffle product formula for MPLs using their iterated integral expressions.
(The difference lies in decomposing the range of summation rather than decomposing the range of integration.)
All error terms can be handled by \cref{lem:error}.
For details, refer to \cite[Propositions~2.3 and 2.4]{Seki-pre} as the procedure is almost the same, if necessary.
Note that terms of the form $(n-Nz)(n-Nz')$ do not appear in each $\ii_{\bk}^{(N)}(\bz)$, and that $w_0$ is an element in $\fH_{\sh}^0$.
\end{proof}
% --------------------------------------------------------------------------
\subsection{Proof of \cref{thm:EDSR_for_MPL}}
% --------------------------------------------------------------------------
\begin{theorem}[Asymptotic double shuffle relations]\label{thm:asymp_DSR}
For $w_1\in\fH_{\sh}^1$, $w_0\in\fH_{\sh}^0$, we have
\[
\rL_{<N}(\top(w_1)*\top(w_0)-\top(w_1\sh w_0))=O(N^{-1/3}\log^{\bullet}N)
\]
as $N\to\infty$.
\end{theorem}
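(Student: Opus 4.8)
The plan is to prove the statement for monomials and extend by $\QQ$-bilinearity, since $*$, $\sh$, $\top$, and $\rL_{<N}$ are all linear. So write $w_1=e_{\bz_1,\bk_1}$ of depth $d_1$ and $w_0=e_{\bz_0,\bk_0}$ of depth $d_0$, where every entry of $\bz_1,\bz_0$ has absolute value $\geq 1$ and $w_0$ additionally satisfies the tail convergence condition built into $\fH_{\sh}^0$. The key bookkeeping device is the elementary dictionary $\rL_{<N}(\top(e_{\bz,\bk}))=\Li_{\bk}^{\sh,<N}(\bz)$: writing $\top(e_{\bz,\bk})=e_{\bxi,\bk}$ with $\xi_i=z_{i+1}/z_i$ and $z_{r+1}=1$, the finite sums $\Li_{\bk}^{*,<N}(\bxi)$ and $\Li_{\bk}^{\sh,<N}(\bz)$ agree termwise after the substitution $\xi_i^{n_i}=(z_{i+1}/z_i)^{n_i}$. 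Note also that the tail products $\prod_{j=i}^r\xi_j=1/z_i$ have absolute value $\leq 1$, so \cref{prop:reg_polynomial} applies to $w_1$.

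First I would handle $\rL_{<N}(\top(w_1)*\top(w_0))$. Since $\top(w_1),\top(w_0)\in\fH_*^1$, the harmonic product formula (\cref{prop:harmonic}) gives $\rL_{<N}(\top(w_1)*\top(w_0))=\rL_{<N}(\top(w_1))\,\rL_{<N}(\top(w_0))=\Li_{\bk_1}^{\sh,<N}(\bz_1)\,\Li_{\bk_0}^{\sh,<N}(\bz_0)$. Here $\Li_{\bk_0}^{\sh,<N}(\bz_0)=O(1)$ because $w_0$ is admissible, while $\Li_{\bk_1}^{\sh,<N}(\bz_1)=O(\log^{\bullet}N)$ by \cref{prop:reg_polynomial}. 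Using these two bounds, I would replace each factor by its $\widetilde{\Li}$-counterpart via \cref{prop:asymptotic_for_modifiedMPL}, the two cross terms being absorbed into $O(N^{-1/3}\log^{\bullet}N)$, to obtain $\widetilde{\Li}_{\bk_1}^{\sh,(N)}(\bz_1)\,\widetilde{\Li}_{\bk_0}^{\sh,(N)}(\bz_0)+O(N^{-1/3}\log^{\bullet}N)$. Now \cref{thm:main} converts the two modified polylogarithms into $(-1)^{d_1}\ii^{(N)}(w_1)$ and $(-1)^{d_0}\ii^{(N)}(w_0)$, producing $(-1)^{d_1+d_0}\ii^{(N)}(w_1)\,\ii^{(N)}(w_0)+O(N^{-1/3}\log^{\bullet}N)$, and the asymptotic shuffle product formula (\cref{prop:asymp_shuffle}) finally yields $(-1)^{d_1+d_0}\ii^{(N)}(w_1\sh w_0)+O(N^{-1/3}\log^{\bullet}N)$.

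Next I would treat $\rL_{<N}(\top(w_1\sh w_0))$. The crucial structural fact is that $\sh$ is homogeneous in depth, so $w_1\sh w_0$ is a $\QQ$-linear combination of words all of depth $d_1+d_0$. Applying the dictionary, then \cref{prop:asymptotic_for_modifiedMPL} and \cref{thm:main} term by term, gives $\rL_{<N}(\top(w_1\sh w_0))=(-1)^{d_1+d_0}\ii^{(N)}(w_1\sh w_0)+O(N^{-1/3}\log^{\bullet}N)$ with the same global sign. Subtracting this from the expression for $\rL_{<N}(\top(w_1)*\top(w_0))$ cancels the common main term $(-1)^{d_1+d_0}\ii^{(N)}(w_1\sh w_0)$ and leaves the desired $O(N^{-1/3}\log^{\bullet}N)$.

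I expect the main obstacle to be the matching of signs, which is what makes the two sides cancel: it relies precisely on $\sh$ preserving depth, so that the single factor $(-1)^{d_1+d_0}$ governs both terms, rather than the depths of the individual shuffle summands. The remaining subtlety is the error bookkeeping in the product step, where the two boundedness inputs---$\Li_{\bk_0}^{\sh,<N}(\bz_0)=O(1)$ for the admissible factor and $\Li_{\bk_1}^{\sh,<N}(\bz_1)=O(\log^{\bullet}N)$ for the possibly divergent factor---are exactly what is needed to multiply the two $\widetilde{\Li}\to\Li$ approximations while keeping the error at $O(N^{-1/3}\log^{\bullet}N)$.
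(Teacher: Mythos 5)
Your proposal is correct and follows essentially the same route as the paper's proof: harmonic product formula on one side, then the chain $\rL_{<N}(\top(\cdot))=\Li^{\sh,<N}\approx\widetilde{\Li}^{\sh,(N)}=(-1)^{\dep}\ii^{(N)}$ via \cref{prop:asymptotic_for_modifiedMPL} and \cref{thm:main}, combined with the asymptotic shuffle product formula (\cref{prop:asymp_shuffle}) on the other. The paper states this chain in one line, whereas you usefully spell out the error bookkeeping and the depth-homogeneity of $\sh$ that makes the signs match; these details are correct.
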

\begin{proof}
By \cref{prop:harmonic}, we see that 
\[
\rL_{<N}(\top(w_1))\rL_{<N}(\top(w_0))=\rL_{<N}(\top(w_1)\ast\top(w_0))
\]
holds.
On the other hand, by \cref{prop:asymp_shuffle}, \cref{thm:main} and \cref{prop:asymptotic_for_modifiedMPL}, we have
\[
\rL_{<N}(\top(w_1))\rL_{<N}(\top(w_0))=\rL_{<N}(\top(w_1\sh w_0))+O(N^{-1/3}\log^{\bullet}N)
\]
as $N\to\infty$.
By combining these two, the conclusion is obtained.
\end{proof}
\begin{proof}[Proof of \cref{thm:EDSR_for_MPL}]
The proof deriving \cref{thm:EDSR_for_MPL} from \cref{thm:asymp_DSR} and \cref{prop:reg_polynomial} is exactly the same as in \cite[Section~3]{Seki-pre}.
\end{proof}
% --------------------------------------------------------------------------
\section{Miscellaneous}\label{sec:Misc}
% --------------------------------------------------------------------------
In this section, we provide a proof of \eqref{eq:pi/4} mentioned in \cref{sec:intro} and a reformulation of special cases of our main result using certain types of multiple harmonic sums that do not involve binomial coefficients.
This is a generalization of the equation \eqref{eq:MSWexample}, and the authors initially discovered these formulas through numerical experiments.
\begin{proof}[Proof of \eqref{eq:pi/4}]
In this proof, $i$ denotes the imaginary unit.
By employing \cref{thm:main}, for $x = \pm i$, we obtain
\[
\sum_{n=1}^{N-1} \frac{1}{n} \frac{\binom{N-1}{n}}{\binom{\pm Ni - 1}{n}} = - \sum_{n=1}^{N-1} \frac{1}{n \mp Ni}.
\]
Combining these yields
\begin{align*}
\sum_{n=1}^{N-1} \frac{N}{n^2+N^2} &= \frac{1}{2i} \sum_{n=1}^{N-1} \frac{1}{n} \left(\frac{\binom{N-1}{n}}{\binom{-Ni-1}{n}} - \frac{\binom{N-1}{n}}{\binom{Ni-1}{n}} \right)\\
&= \frac{1}{2i} \sum_{n=1}^{N-1} \frac{1}{n} \left(\prod_{j=1}^n \frac{N-j}{N^2+j^2}\right) \left(\prod_{l=1}^n (Ni-l) - \prod_{l=1}^n (-Ni-l) \right).
\end{align*}
By the definition of the Stirling number of the first kind, we have
\[
\prod_{l=1}^n (\pm Ni-l) = (-1)^n \sum_{l=0}^n \stirlingI{n+1}{l+1} (\mp Ni)^l,
\]
which implies
\[
\frac{1}{2i}\left(\prod_{l=1}^n (Ni-l) - \prod_{l=1}^n (-Ni-l)\right)=(-1)^n\sum_{0 \le l < n/2} (-1)^{l+1} \stirlingI{n+1}{2l+2} N^{2l+1}.
\]
Thus, the proof completes.
\end{proof}
When $r=1$ and $x_1=-1$, the equation \eqref{eq:modified_main} becomes
\begin{equation}\label{eq:depone_case}
\sum_{n=1}^N\frac{(-1)^n}{n^k}\frac{\binom{N}{n}}{\binom{N+n}{n} } =
-\sum_{1\leq n_1\leq \cdots \leq n_k\leq N}
\frac{1}{(n_1+N)n_2\cdots n_k}.
\end{equation}
We give another expression for this quantity.
\begin{theorem}
Let $N$ and $k$ be positive integers.
Then we have
\[
\sum_{1\leq n_1\leq \cdots \leq n_{k}\leq N}
\frac{1}{(n_1+N)n_2\cdots n_{k}}
=
\begin{cases}
\displaystyle{\frac{1}{2}\sum_{1\leq m_1\leq\cdots\leq m_r\leq N}
\frac{1}{m_1^2\cdots m_r^2}} &\text{if } k=2r,\\
\displaystyle{\sum_{1\leq n\leq 2m_1 \leq \cdots\leq 2m_r\leq 2N}
\frac{(-1)^{n-1}}{nm_1^2\cdots m_r^2}} &\text{if } k=2r+1.
\end{cases}
\]
\end{theorem}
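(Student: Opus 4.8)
\emph{Setup.} The plan is to convert the theorem into two rational-function identities in an auxiliary variable and then verify those by residues and a short induction. By \eqref{eq:depone_case}, the left-hand side is the depth-one special value
\[
S_k\coloneqq\sum_{1\le n_1\le\cdots\le n_k\le N}\frac{1}{(n_1+N)n_2\cdots n_k}=\sum_{n=1}^{N}\frac{(-1)^{n-1}}{n^k}\,c_n,\qquad c_n\coloneqq\frac{\binom{N}{n}}{\binom{N+n}{n}}=\frac{\binom{2N}{N-n}}{\binom{2N}{N}}.
\]
Introducing a variable $y$ and forming the even and odd generating series gives
\[
\sum_{r\ge1}S_{2r}\,y^r=\sum_{n=1}^{N}(-1)^{n-1}c_n\frac{y}{n^2-y},\qquad \sum_{r\ge0}S_{2r+1}\,y^r=\sum_{n=1}^{N}(-1)^{n-1}c_n\frac{n}{n^2-y}.
\]
Writing $\Pi(y)\coloneqq\prod_{m=1}^{N}\frac{m^2}{m^2-y}$, the generating function of complete homogeneous symmetric functions gives $\sum_{r\ge0}\big(\sum_{1\le m_1\le\cdots\le m_r\le N}m_1^{-2}\cdots m_r^{-2}\big)y^r=\Pi(y)$, so the even case is equivalent to
\begin{equation*}
\sum_{n=1}^{N}(-1)^{n-1}c_n\frac{n^2}{n^2-y}=\tfrac12\,\Pi(y).\tag{E}
\end{equation*}

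\emph{Odd target.} For the odd case I would first perform the innermost summation over the alternating index, using $\sum_{j=1}^{2m}(-1)^{j-1}/j=H_{2m}-H_m$ (with $H_m=\sum_{j=1}^m 1/j$), to rewrite the right-hand side of the theorem for $k=2r+1$ as $\sum_{1\le m_1\le\cdots\le m_r\le N}(H_{2m_1}-H_{m_1})\,m_1^{-2}\cdots m_r^{-2}$. Writing $\Pi_a(y)\coloneqq\prod_{m=a}^{N}\frac{m^2}{m^2-y}$, the telescoping $\frac{y}{a^2}\Pi_a=\Pi_a-\Pi_{a+1}$ together with $(H_{2a}-H_a)-(H_{2a-2}-H_{a-1})=\frac{1}{2a(2a-1)}$ lets Abel summation collapse this series, all boundary terms cancelling, to $\sum_{a=1}^{N}\frac{1}{2a(2a-1)}\Pi_a(y)$. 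Hence the odd case is equivalent to
\begin{equation*}
\sum_{n=1}^{N}(-1)^{n-1}c_n\frac{n}{n^2-y}=\sum_{a=1}^{N}\frac{1}{2a(2a-1)}\,\Pi_a(y).\tag{O}
\end{equation*}

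\emph{Proofs of (E) and (O).} Identity \textup{(E)} I would settle by comparing both sides as rational functions of $y$: each has only simple poles at $y=n^2$ $(1\le n\le N)$ and tends to $0$ as $y\to\infty$, and a one-line residue computation shows the residues coincide, so the two sides are equal; its value at $y=0$ incidentally yields $\sum_{n=1}^N(-1)^{n-1}c_n=\tfrac12$. For \textup{(O)} a term-by-term residue match fails because the pole at $y=n^2$ picks up all $a\le n$, so I would instead induct on $N$. Abbreviating the two sides as $L_N(y)$ and $R_N(y)$, the right-hand side obeys $R_N=\frac{N^2}{N^2-y}\big(R_{N-1}+\frac{1}{2N(2N-1)}\big)$, while the partial-fraction split $\frac{1}{(N^2-n^2)(n^2-y)}=\frac{1}{N^2-y}\big(\frac{1}{n^2-y}+\frac{1}{N^2-n^2}\big)$ gives $L_N=\frac{N^2}{N^2-y}\big(L_{N-1}+\frac{1}{N^2}\sum_{n=1}^{N}(-1)^{n-1}n\,c_n\big)$. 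The induction therefore closes once the two additive constants agree, i.e. once
\begin{equation*}
\sum_{n=1}^{N}(-1)^{n-1}n\binom{2N}{N-n}=\binom{2N-2}{N-1}.\tag{B}
\end{equation*}
This, and the companion evaluation read off from \textup{(E)}, follow from the standard partial-sum identity $\sum_{j=0}^{m}(-1)^j\binom{M}{j}=(-1)^m\binom{M-1}{m}$ via the substitution $j=N-n$ and the absorption $j\binom{2N}{j}=2N\binom{2N-1}{j-1}$.

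\emph{Main obstacle.} The hard part is the odd case. Unlike \textup{(E)}, identity \textup{(O)} resists a direct residue argument, so the crux is twofold: extracting the clean closed form $\sum_{a}\frac{1}{2a(2a-1)}\Pi_a(y)$ for the odd target through the Abel-summation telescoping (where the $H$-boundary terms must be shown to cancel), and then verifying that the two recursions share the same additive constant, which is precisely the content of the binomial identity \textup{(B)}.
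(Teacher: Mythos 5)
Your proof is correct, but it takes a genuinely different route from the paper's. The paper proves the identity by a direct double induction on $N$ and $k$ applied to the nested sums themselves: writing $a_{N,k}$ and $b_{N,k}$ for the two sides, it establishes the common recursion $a_{N,k}=a_{N-1,k}+\frac{1}{N^2}a_{N,k-2}$ (and likewise for $b$), with base cases $N=1$ and $k=1$ (the latter being \eqref{eq:MSWexample_second}) and the convention $a_{N,0}=b_{N,0}=\frac12$; no binomial coefficients appear. You instead pass through \eqref{eq:depone_case} to the alternating binomial sum $\sum_{n}(-1)^{n-1}n^{-k}\binom{2N}{N-n}/\binom{2N}{N}$, package all $k$ of a fixed parity into a rational generating function in $y$, and verify two partial-fraction identities: (E) by matching residues at $y=n^2$ (your residue $\frac{(-1)^{n-1}}{2}\prod_{m\neq n}\frac{m^2}{m^2-n^2}$ does equal $c_n$, so this works), and (O) by induction on $N$, where the recursions for the two sides agree precisely because of the binomial identity (B), which I checked does evaluate to $\binom{2N-2}{N-1}$ as claimed. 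Your route is longer but yields the closed-form rational identities (E) and (O) as byproducts, and it is closer in spirit to the Remark following the theorem in the paper, which also works with the binomial form of the left-hand side (via connected sums and a transport relation rather than residues and generating functions). Two small points to make explicit in a final write-up: when deriving (O) you must include the $r=0$ term $H_{2N}-H_N$ of the generating series so that it cancels the boundary term $-(H_{2N}-H_N)\Pi_{N+1}$ produced by the Abel summation (this is exactly what makes ``all boundary terms cancel''), and the induction on $N$ for (O) needs its trivial base case ($L_0=R_0=0$, or $L_1=R_1=\frac{1}{2(1-y)}$) stated.
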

\begin{proof}
Let $a_{N,k}$ and $b_{N,k}$ be the left and right hand side, respectively.
We prove the claim by induction on $N$ and $k$.
The case $N=1$ follows from $a_{1,k}=\frac{1}{2}=b_{1,k}$. The case $k=1$,
\begin{equation}\label{eq:MSWexample_second}
\sum_{n=1}^{N}\frac{1}{n+N}=\sum_{n=1}^{2N}\frac{(-1)^{n-1}}{n},
\end{equation}
is equivalent to \eqref{eq:MSWexample}.
Let $N>1$ and $k>1$. Then we have
\begin{align*}
a_{N,k} & =\sum_{1\leq n_1\leq\cdots\leq n_k\leq N-1}\frac{1}{(n_1+N)n_2\cdots n_k}
+\sum_{1\leq n_1\leq\cdots\leq n_k=N}\frac{1}{(n_1+N)n_2\cdots n_k}\\
 & =\sum_{1\leq n_1\leq\cdots\leq n_k\leq N-1}\frac{1}{(n_1+N)n_2\cdots n_k}+\frac{1}{N}\sum_{1\leq n_1\leq\cdots\leq n_{k-1}\leq N}\frac{1}{(n_1+N)n_2\cdots n_{k-1}}.
\end{align*}
Since
\[
\sum_{n_1=1}^{n_2}\frac{1}{n_1+N}=\sum_{n_1=1}^{n_2}\frac{1}{n_1+N-1}+\frac{1}{n_2+N}-\frac{1}{N}=
\sum_{n_1=1}^{n_2}\frac{1}{n_1+N-1}-\frac{n_2}{N(n_2+N)},
\]
we have
\begin{align*}
&\sum_{1\leq n_1\leq\cdots\leq n_k\leq N-1}\frac{1}{(n_1+N)n_2\cdots n_k}\\
& =\sum_{1\leq n_1\leq\cdots\leq n_k\leq N-1}\frac{1}{(n_1+N-1)n_2\cdots n_k}-\frac{1}{N}\sum_{1\leq n_2\leq\cdots\leq n_k\leq N-1}\frac{1}{(n_2+N)n_3\cdots n_k}\\
 & =a_{N-1,k}-\frac{1}{N}\sum_{1\leq n_1\leq\cdots\leq n_{k-1}\leq N-1}\frac{1}{(n_1+N)n_2\cdots n_{k-1}}.
\end{align*}
Thus, by induction hypothesis, we have
\begin{align*}
a_{N,k} & =a_{N-1,k}+\frac{1}{N}\sum_{1\leq n_1\leq\cdots\leq n_{k-1}=N}\frac{1}{(n_1+N)n_2\cdots n_{k-1}}\\
&=a_{N-1,k}+\frac{1}{N^2}a_{N,k-2}\\
&=b_{N-1,k}+\frac{1}{N^2}b_{N,k-2}
=b_{N,k}.
\end{align*}
Here, we have set $a_{N,0}=b_{N,0}=\frac{1}{2}$.
This completes the proof.
\end{proof}
\begin{remark}
We can also directly prove that the left-hand side of \eqref{eq:depone_case} equals $-b_{N,k}$ using the method of connected sums.
We consider a connected sum
\[
\sum_{1\leq n\leq m_1\leq \cdots\leq m_b\leq N}\frac{(-1)^n}{n^a}\frac{\binom{m_1}{n}}{\binom{m_1+n}{n}}\frac{1}{m_1^2\cdots m_b^2}
\]
for positive integers $a$ and $b$.
After repeatedly applying the transport relation \cite[Lemma~2.1 (2.4)]{Hessami-etal2014},
\[
\frac{1}{n^2}\frac{\binom{m}{n}}{\binom{m+n}{n}}=\sum_{n\leq m'\leq m}\frac{\binom{m'}{n}}{\binom{m'+n}{n}}\frac{1}{(m')^2},
\]
the desired formula is proved by applying
\[
\sum_{1\leq n\leq m}(-1)^n\frac{\binom{m}{n}}{\binom{m+n}{n}}=-\frac{1}{2}
\]
(this follows from \cref{lem:Sum-Binom}) once when $k$ is even and applying 
\[
\sum_{1\leq n\leq m}\frac{(-1)^n}{n}\frac{\binom{m}{n}}{\binom{m+n}{n}}=\sum_{1\leq n\leq 2m}\frac{(-1)^n}{n}
\]
(this follows from \eqref{eq:depone_case} for the case $k=1$ and \eqref{eq:MSWexample_second}) once when $k$ is odd.
When $k$ is even, this was already proved in \cite[Corollary 2.4]{Hessami-etal2014}.
\end{remark}
% --------------------------------------------------------------------------

\end{document}